\newtheorem{theorem}{Theorem} [subsection]
\newtheorem{lemma}[theorem]{Lemma}
\newtheorem{corollary}[theorem]{Corollary}
\newtheorem{proposition}[theorem]{Proposition}
\theoremstyle{definition}
\newtheorem{definition}[theorem]{Definition}
\newtheorem{remark}[theorem]{Remark}
\newtheorem{example}[theorem]{Example}
\numberwithin{equation}{section}
\newcommand{\Z}{\mathbb{Z}}
\newcommand{\R}{\mathbb{R}}
\newcommand{\ho}{\mathrm{hocolim}}
\newcommand{\Ob}{\mathrm{Ob}}
\newcommand{\Mor}{\mathrm{Mor}}
\newcommand{\hof}{\mathrm{hofiber}}
\newcommand{\h}{\mathrm{hocolim_{\mathcal{I}^{op}}}}
\newcommand{\co}{\mathrm{colim}}
\newcommand{\End}{\mathrm{End}}
\newcommand{\Diff}{\mathrm{Diff}}
\renewcommand{\leq}{\leqslant}
\renewcommand{\geq}{\geqslant}
\newcommand{\id}{\mathrm{id}}
\begin{document}
\title{On the homotopy type of certain cobordism categories of surfaces}
\author{George Raptis}%
\address{Universit\"{a}t Osnabr\"{u}ck, 
Institut f\"{u}r Mathematik, 
Albrechtstrasse 28a,
49069 Osnabr\"{u}ck, Germany}
\email{graptis@mathematik.uni-osnabrueck.de}
\subjclass[2000]{57N70, 57R50, 55N22, 55P99, 55R12}
\keywords{Cobordism categories, mapping class groups, symmetric groups, classifying spaces, transfer maps, Thom spectra.}
\date{}
\maketitle

\begin{abstract}
Let $\mathcal{A}_{g,d}$ be the (topological) cobordism category of orientable surfaces whose connected components are  homeomorphic to either $S^1 \times I$ with one incoming and one outgoing boundary component or the surface $\Sigma_{g,d}$ of genus $g$ and $d$ boundary components that are all incoming. In this paper, we study the homotopy type of the classifying space of the cobordism category $\mathcal{A}_{g,d}$ and the associated (ordinary) cobordism category of its connected components $\mathbb{A}_d$. $\mathcal{A}_{0,2}$ is the cobordism category of complex annuli that was considered by Costello and $\mathbb{A}_2$ is homotopy equivalent with the positive boundary 1-dimensional embedded cobordism category of Galatius-Madsen-Tillmann-Weiss. We identify their homotopy type with the infinite loop spaces associated with certain Thom spectra. 
\end{abstract}

\section{Introduction}

The concept of a \textit{cobordism category} emerged from the works of Atiyah \cite{At} and Segal \cite{Se2}  to define topological and conformal field theories respectively.  Broadly speaking, a cobordism category is a category where the objects are given by closed manifolds and the morphisms are cobordisms between them. The cobordism categories that appear in the definition of a topological field theory are only concerned with the topological type of the relevant manifolds, so they are ordinary categories. On the other hand, the cobordism category of Riemann surfaces $\mathcal{M}$ that was introduced in \cite{Se2} is a topological category that has as objects the non-negative integers and the morphism spaces are given by the moduli spaces of Riemann surfaces. The authors of \cite{GMTW} defined  a topological cobordism category $\mathcal{C}_d$ of $d$-dimensional smooth manifolds by considering smooth cobordisms embedded in some arbitrarily high dimensional Euclidean space.  The choice of the (unparametrised) embedding induces the topology on these cobordism categories and its homotopy type is given by the homotopy type of the classifying spaces of the diffeomorphism groups of the corresponding smooth manifolds. Then their classifying spaces may be regarded as certain kinds of moduli spaces of manifolds that encode information that is stable with respect to extending by cobordisms. The homotopy type of the $d$-dimensional embedded cobordism category, along with its variations that are specified by a choice of a tangential structure, was determined in \cite{GMTW}. In the oriented $2$-dimensional case, this led to a new proof of Mumford's conjecture on the cohomology of the stable mapping class group of orientable surfaces.

This paper is concerned with the homotopy type of the classifying spaces of certain subcategories $\mathcal{A}_{g,d}$ of $\mathcal{M}$ and the associated ordinary cobordism categories of their connected components $\mathbb{A}_d$. For every $d \geq 1$ and $g \geq 0$, the cobordism category $\mathcal{A}_{g,d}$ is a model for the subcategory of $\mathcal{M}$ that has the same objects but it only contains Riemann surfaces each of whose connected components is either homeomorphic to the cylinder $S^1 \times I$ with one incoming and one outgoing boundary component or the surface $\Sigma_{g,d}$ of genus $g$ with $d$ boundary components that are all incoming. This is a natural choice of a subcategory of (the positive boundary subcategory of) $\mathcal{M}$ that isolates the contribution of a single genus $g$, while the parameter $d$ functions as a decoration. Our main result about the homotopy type of $B \mathcal{A}_{g,d}$ says that there is a homotopy fiber sequence
\begin{equation} \label{hofi2}
QB\Diff^+(\Sigma_g^{(d)})_+ \to QBSO(2)_+ \to B\mathcal{A}_{g,d}
\end{equation}
where the first map is induced by the evaluation of the differential of a diffeomorphism at $d$ 
marked points.  Here $\Diff^+(\Sigma_g^{(d)})$ is the topological group of orientation-preserving diffeomorphisms of a closed orientable surface of genus $g$ with $d$ marked points that are allowed to permute.

The cobordism category $\mathbb{A}_d$ is the category of connected components of $\mathcal{A}_{g,d}$ (this is clearly independent of $g$). It is a subcategory of the cobordism category of orientable surfaces up to homeomorphism. We show that there is a homotopy fiber sequence 
\begin{equation} \label{hofi1}
QB\Sigma_d{}_+ \to QS^0 \to B\mathbb{A}_d
\end{equation}
where the first map is induced by the natural map $B \Sigma_d \to B \Sigma_{\infty}^+ \simeq Q_dS^0$. Here $Q_d S^0$ is the connected component of $QS^0$ of the map with degree $d$. 

Intuitively, we may regard the cobordism categories $\mathcal{A}_{g,d}$ and $\mathbb{A}_d$ as admitting a presentation with generators given by the isomorphisms and relations that are generated by the space of morphisms from $d$ to $0$. Then one can interpret these homotopy fiber sequences as resolutions of the cobordism categories $\mathcal{A}_{g,d}$ and $\mathbb{A}_d$ by the free infinite loop spaces generated respectively by the generators and the relations in the presentation. 

Two cases of special interest are the topological cobordism category of complex annuli $\mathcal{A}_{0,2}$ and the ordinary cobordism category of its connected components $\mathbb{A}_2$. The category of complex annuli appeared in the work of Costello \cite{Cos2} in connection with the homotopy type of the compactified moduli spaces of Riemann surfaces. In this case, the homotopy fiber sequence \eqref{hofi2} can be written as follows,  
\begin{displaymath}
QBO(2)_+ \to QBSO(2)_+ \to B \mathcal{A}_2
\end{displaymath}
and the first map is the stable transfer map associated with the double covering (up to homotopy) $BSO(2) \to BO(2)$. On the other hand, the cobordism category $\mathbb{A}_2$ will be seen to be homotopy equivalent with the positive boundary 1-dimensional embedded cobordism category $\mathcal{C}_{1, \partial}$ of \cite{GMTW}. For $d=2$, the homotopy fiber sequence \eqref{hofi1} can be written as follows,
\begin{displaymath}
Q\R P^{\infty}_+ \to QS^0 \to B\mathbb{A}_2
\end{displaymath}
and the first map is the stable transfer map associated with the universal double covering. As a consequence, we can identify the homotopy type of these two cobordism categories with the infinite loop spaces of the Thom spectra associated with the stable bundles that are inverse to the relevant line bundles, i.e., there are weak homotopy equivalences
\begin{align*}
B \mathcal{A}_{0,2} \stackrel{\simeq}{\to} \Omega^{\infty - 1} \mathbf{Th}(- \delta_2), \\
B \mathbb{A}_2 \stackrel{\simeq}{\to} \Omega^{\infty -1} \mathbf{Th}(- \gamma_1).
\end{align*}
Here $\delta_2$ denotes the determinant line bundle associated with the double covering $BSO(2) \to BO(2)$ and $\gamma_1$ is the universal line bundle. The second homotopy equivalence together with main result of \cite{GMTW} in the $1$-dimensional case implies that the inclusion of the positive boundary subcategory
\begin{equation} \label{hoeq}
 B\mathcal{C}_{1, \partial} \stackrel{\simeq}{\to} B \mathcal{C}_{1}
\end{equation}
is weak homotopy equivalence. By a similar argument, we show that this is true also in the oriented $1$-dimensional case, i.e.. the inclusion $B\mathcal{C}^+_{1, \partial} \stackrel{\simeq}{\to} B \mathcal{C}^+_{1}$ is a weak homotopy equivalence. The analogous results have been shown in all higher dimensions in \cite{GMTW}. \\

\parindent=0in \textit{Organisation of the paper.} In section \ref{cob2}, we discuss the homotopy theory of the classifying spaces of categories and develop the general method that will be needed in showing the existence of the homotopy fiber sequences.  This part of the paper is independent of the context of cobordism categories and so the results are also of independent interest. However we have restricted to a generality that will be sufficient for the purpose of proving our main results. Further details and natural generalisations will be discussed in the appendix. In section \ref{cob3}, we study the homotopy type of the ordinary cobordism categories $\mathbb{A}_d$ and prove the homotopy fiber sequence \eqref{hofi1}.  In section \ref{cob4}, we show that the category $\mathbb{A}_2$ is homotopy equivalent with the positive boundary 1-dimensional embedded cobordism category $\mathcal{C}_{1,\partial}$ of \cite{GMTW}. Using results from \cite{GMTW}, we then prove the weak homotopy equivalence \eqref{hoeq} and its analogue in the oriented case. In section \ref{cob5}, we study the homotopy type of the topological cobordism categories $\mathcal{A}_{g,d}$ and prove the homotopy fiber sequence \eqref{hofi2}. For technical reasons, we will work with mapping class groups instead of diffeomorphism groups of surfaces whenever the values of $g$ and $d$ are so that the components of the diffeomorphism groups are contractible. \\

\parindent=0.2in

\section{Preliminaries on Classifying Spaces}  \label{cob2}

\subsection{The Grothendieck construction} \label{cob2.0} Let $\mathcal{C}at(\mathcal{T}op)$ denote the topological category of small topological categories and continuous functors. A functor between topological categories is called continuous if it defines continuous maps between the spaces of objects and morphisms. Throughout the paper, we will work with compactly generated spaces and all constructions will be assumed to take place in this category of spaces.  

Let $\mathcal{C}$ be a small topological category in $\mathcal{C}at(\mathcal{T}op)$ with a \textit{discrete} space of objects and such that the inclusion $\Ob \mathcal{C} \hookrightarrow \Mor \mathcal{C}$ is a cofibration. The Grothendieck construction associates to every continuous functor $F: \mathcal{C} \to \mathcal{C}at(\mathcal{T}op)$, a topological category $\int_{\mathcal{C}} F$ together with a continuous projection functor $\int_{\mathcal{C}} F \to \mathcal{C}$.

The objects of $\int_{\mathcal{C}} F$ are the pairs $(c, a)$ where $c \in \Ob\mathcal{C}$ and $a \in \Ob F(c)$. A morphism from $(c,a)$ to $(c',a')$ is given by a pair $(k,f)$ where $k:c \to c'$ is a morphism in $\mathcal{C}$ and $f: F(k)(a) \to a'$ is a morphism in $F(c')$. The composition of two morphisms $(k,f): (c,a) \to (c',a')$ and $(k',f'): (c',a') \to (c'',a'')$ is defined to be the morphism $(k'  k, f'  F(k')(f))$. The topology on $\int_{\mathcal{C}} F$ is defined as follows: the space of objects $\Ob \int_{\mathcal{C}} F$ is identified with $\bigsqcup_{c \in \Ob \mathcal{C}} \Ob F(c)$ and the morphisms are topologised as subspaces of the morphism spaces of $\mathcal{C}$ and $F(c)$ for all $c \in \Ob\mathcal{C}$. There is a projection functor $p_F: \int_{\mathcal{C}} F \to \mathcal{C}$ defined on objects by sending $(c,a) \mapsto c$  and on morphisms by $(k,f) \mapsto k$. Note that its fiber at $c \in \Ob\mathcal{C}$ is the category $F(c)$. Furthermore, the construction is natural in $F$, i.e., it defines a functor $\int_{\mathcal{C}} : \mathrm{Fun}(\mathcal{C}, 
\mathcal{C}at(\mathcal{T}op)) \to \mathcal{C}at(\mathcal{T}op) \downarrow \mathcal{C}$.  

The nerve functor $N : \mathcal{C}at(\mathcal{T}op)  \to s\mathcal{T}op := \mathcal{T}op^{\Delta^{op}}$ takes a small topological category $\mathcal{D}$ to the simplicial space $N_. \mathcal{D}$ whose space of $n$-simplices is the space of sequences of $n$ composable morphisms in $\mathcal{D}$, 
\begin{displaymath}
N_n \mathcal{D} := \underbrace{\Mor\mathcal{D}_{\Ob \mathcal{D}} \times_{\Ob \mathcal{D}} \cdots \times_{\Ob \mathcal{D}} \Mor\mathcal{D}}_{n}.
\end{displaymath}
The classifying space functor $B: \mathcal{C}at(\mathcal{T}op) \to \mathcal{T}op$ is the composition of the nerve functor followed by the geometric realisation functor $|-|: s\mathcal{T}op \to \mathcal{T}op$.

The Bousfield-Kan homotopy colimit of a continuous functor $F: \mathcal{C} \to s\mathcal{T}op$, denoted by $\ho_{\mathcal{C}}F$, is (the geometric realisation of) a bisimplicial space whose $(p,q)$-simplices are the pairs
\begin{displaymath}
(c_0 \to \cdots \to c_p \in N_p \mathcal{C}, x \in F(c_0)_q).
\end{displaymath} 

For every continuous functor $F: \mathcal{C} \to \mathcal{T}op$, there is a topological category $\mathcal{C} \wr F$, called the transport or wreath-product category, whose space of objects is
\begin{displaymath}
 \Ob \mathcal{C} \wr F = \bigsqcup_{c \in \Ob \mathcal{C}} F(c)
\end{displaymath}
and  
\begin{displaymath}
 \mathcal{C} \wr F((c,x),(c',x'))= \{ f \in \mathcal{C}(c,c') : F(f)(x)=x' \}.
\end{displaymath}
 
\begin{proposition} \label{thomason-grothendieck}
For every continuous functor $F: \mathcal{C} \to \mathcal{C}at(\mathcal{T}op)$, there are natural weak homotopy equivalences
\begin{displaymath}
\xymatrix{
B(\mathcal{C} \wr BF) & |\ho_{\mathcal{C}}NF| \ar[l]_-{\simeq} \ar[r]^-{\simeq} & B(\int_{\mathcal{C}} F).
}
\end{displaymath}
\end{proposition}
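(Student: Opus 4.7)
The plan is to establish the two equivalences separately: the left one is essentially tautological at the bisimplicial level, while the right one is a topological incarnation of Thomason's homotopy colimit theorem.

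For the left equivalence, I would unpack the definition of the transport category directly. A $p$-simplex of $N(\mathcal{C}\wr BF)$ is the datum of a $p$-simplex $\sigma=(c_0\xrightarrow{k_1}\cdots\xrightarrow{k_p}c_p)$ of $N\mathcal{C}$ together with a single point $y\in BF(c_0)$, since the remaining labels $y_i\in BF(c_i)$ are forced by the wreath-product condition $y_i=BF(k_i\cdots k_1)(y)$. Therefore
\begin{displaymath}
N_p(\mathcal{C}\wr BF) \;=\; \bigsqcup_{\sigma\in N_p\mathcal{C}} BF(c_0) \;=\; \Bigl|\, q\mapsto \bigsqcup_{\sigma\in N_p\mathcal{C}} N_qF(c_0)\,\Bigr|,
\end{displaymath}
which is exactly the vertical partial realization of the bisimplicial space defining $\ho_{\mathcal{C}}NF$. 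A routine check shows that the induced horizontal simplicial structure matches the Bousfield--Kan one on the nose (the only nontrivial case being $d_0$, where both sides push forward along $F(k_1)$). Taking the remaining geometric realization and invoking the standard identification of total and iterated realizations of a bisimplicial space yields a natural homeomorphism $|\ho_{\mathcal{C}}NF|\cong B(\mathcal{C}\wr BF)$.

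For the right equivalence, I would invoke the topological version of Thomason's homotopy colimit theorem. The argument proceeds by introducing an auxiliary bisimplicial space $\widetilde Z_{\bullet,\bullet}$ whose $(p,q)$-simplices combine a $p$-simplex $\sigma\in N_p\mathcal{C}$ with a $q$-simplex in a suitable nerve of $F$ along $\sigma$, set up so that one iterated realization recovers $\ho_{\mathcal{C}}NF$ while the other recovers $N(\int_{\mathcal{C}}F)$. The key combinatorial input is that a $k$-simplex of $N(\int_{\mathcal{C}}F)$ can be canonically rewritten, via the functorial structure of $F$, as a $k$-simplex of $N\mathcal{C}$ together with a $k$-simplex of $NF$ at the terminal vertex, so that the diagonal of $\widetilde Z$ identifies with $N(\int_{\mathcal{C}}F)$. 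Applying the Fubini-type identification of total versus iterated realization then delivers the desired weak equivalence.

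The main obstacle is the bisimplicial bookkeeping in the second step: one must arrange $\widetilde Z$ so that both partial realizations admit the claimed identifications, and then verify that the comparison maps are level-wise weak equivalences in the topological context. This is where the standing hypothesis that $\Ob\mathcal{C}$ is discrete and $\Ob\mathcal{C}\hookrightarrow\Mor\mathcal{C}$ is a cofibration is used: it ensures that the relevant bisimplicial spaces are Reedy cofibrant, so that total and iterated realizations coincide up to natural weak equivalence and weak equivalences can be tested level-wise.
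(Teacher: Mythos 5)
Your overall strategy coincides with the paper's, whose proof consists precisely of the two remarks you make: the left-hand equivalence is the tautological identification of $N_p(\mathcal{C}\wr BF)$ with the partial realisation of the Bousfield--Kan bisimplicial space, and the right-hand one is Thomason's homotopy colimit theorem \cite{T1} transported to topological categories. Your treatment of the first half is correct, with the small caveat that $\bigsqcup_{\sigma\in N_p\mathcal{C}}BF(c_0)$ must be read as the fibre product $N_p\mathcal{C}\times_{\Ob\mathcal{C}}\bigsqcup_{c}BF(c)$, which decomposes into the stated pieces exactly because $\Ob\mathcal{C}$ is discrete; that, rather than Reedy cofibrancy, is the role of the discreteness hypothesis (the cofibration hypothesis is what makes the relevant simplicial spaces good, so that levelwise weak equivalences realise to weak equivalences).

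The one assertion in your sketch of the second half that does not survive scrutiny is that a $k$-simplex of $N(\int_{\mathcal{C}}F)$ is \emph{canonically the same data} as a $k$-simplex of $N\mathcal{C}$ together with a $k$-simplex of $NF$ at the terminal vertex. Pushing the morphisms $f_i\colon F(k_i)(a_{i-1})\to a_i$ forward into $F(c_k)$ is neither injective nor surjective onto $N_k\mathcal{C}\times_{c_k}N_kF(c_k)$, and evaluating $F$ at the last vertex does not even produce a bisimplicial object when $F$ is covariant, since the outer face map would require a transport $F(c_p)\to F(c_{p-1})$ that does not exist. What one actually has is a comparison map from the diagonal of the Bousfield--Kan bisimplicial space (which uses $F(c_0)$) to $N(\int_{\mathcal{C}}F)$, sending $(c_0\to\cdots\to c_p,\,a_0\to\cdots\to a_p)$ to the $p$-simplex with vertices $(c_i,F(k_i\cdots k_1)(a_i))$; this map is far from an isomorphism, and proving that it is nonetheless a weak equivalence is the entire content of Thomason's theorem. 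So this step should be carried out following \cite{T1} (or simply cited, as the paper does) rather than reduced to an identification of diagonals.
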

\begin{proof} The first weak homotopy equivalence is clear. For the second, the proof is exactly the same as for Thomason's well-known homotopy colimit theorem \cite{T1}. (A little explanation about where the technical conditions on $\mathcal{C}$ come in here. The condition that $\Ob \mathcal{C}$ is discrete is required so that, for any choice of objects $c_0, c_1, ..., c_p \in \Ob \mathcal{C}$, the space $\prod_{0 \leq i \leq p-1} \Mor_{\mathcal{C}}(c_i, c_{i+1})$ defines a connected component of $N_p\mathcal{C}$. This condition is not strictly necessary, but it will suffice for our purposes here. The cofibration condition on $\mathcal{C}$ is a standard requirement in order to deal with classifying spaces in a \textit{good} way as in \cite[Appendix A]{Se3}.).
\end{proof}

Therefore any of these constructions gives an equally good model for the homotopy colimit of a diagram of topological categories. The advantage of the Grothendieck construction is that it produces again a 
topological category rather than a simplicial or topological space. The results of this section will be 
stated in terms of the Grothendieck construction, however we will not insist on distinguishing between the models
in later sections.

A morphism $F: \mathcal{D}_1 \to \mathcal{D}_2$ in $\mathcal{C}at(\mathcal{T}op)$ is called a weak equivalence if $B(F)$ is a weak homotopy equivalence in $\mathcal{T}op$. A natural transformation $\eta: F \to G$ of continuous functors $F, G: \mathcal{C} \to \mathcal{C}at(\mathcal{T}op)$ is called a (pointwise) weak equivalence if each of its components is a weak equivalence. 

\begin{corollary} \label{grothendieck-thomason2}
The functor $\int_{\mathcal{C}} \colon  \mathrm{Fun}(\mathcal{C}, \mathcal{C}at(\mathcal{T}op)) \to \mathcal{C}at(\mathcal{T}op)$ sends (pointwise) weak equivalences to weak equivalences.
\end{corollary}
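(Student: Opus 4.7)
The plan is to use Proposition \ref{thomason-grothendieck} to transport the question to homotopy colimits of diagrams of spaces, where homotopy invariance under pointwise weak equivalences is classical.

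Let $\eta: F \to G$ be a pointwise weak equivalence of functors $\mathcal{C} \to \mathcal{C}at(\mathcal{T}op)$. The weak equivalences of Proposition \ref{thomason-grothendieck} are natural in the input functor, so they assemble into a commutative square
\begin{displaymath}
\xymatrix{
|\ho_{\mathcal{C}} NF| \ar[r]^-{\simeq} \ar[d] & B(\int_{\mathcal{C}}F) \ar[d]^-{B(\int_{\mathcal{C}}\eta)} \\
|\ho_{\mathcal{C}} NG| \ar[r]^-{\simeq} & B(\int_{\mathcal{C}}G)
}
\end{displaymath}
whose horizontal arrows are weak homotopy equivalences, so it suffices to show that the left vertical map is a weak homotopy equivalence.

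I would then realize the bisimplicial space $\ho_{\mathcal{C}} NF$ in its internal (simplicial-space) coordinate first. Since bisimplicial realization agrees with iterated realization in either direction, this identifies $|\ho_{\mathcal{C}} NF|$ with the Bousfield--Kan homotopy colimit $\ho_{\mathcal{C}} BF$ of the $\mathcal{T}op$-valued functor $c \mapsto BF(c) = |NF(c)|$, and similarly for $G$; under this identification the left vertical arrow becomes the map $\ho_{\mathcal{C}} BF \to \ho_{\mathcal{C}} BG$ induced by $B\eta$. By hypothesis each $B(\eta_c): BF(c) \to BG(c)$ is a weak equivalence, and the classical homotopy invariance of the Bousfield--Kan construction for diagrams of spaces --- proved via the realization lemma for simplicial spaces, for which the cofibrancy of $\Ob\mathcal{C} \hookrightarrow \Mor\mathcal{C}$ in the sense of \cite[Appendix A]{Se3} suffices --- then concludes the argument.

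The one subtle point to be careful about is the realization-first step: a weak equivalence $\eta_c$ in $\mathcal{C}at(\mathcal{T}op)$ is defined only by the requirement that $B(\eta_c)$ be a weak homotopy equivalence, which is in general strictly weaker than asking $N(\eta_c)$ to be a levelwise weak equivalence of simplicial spaces, so one cannot invoke homotopy invariance on the bisimplicial object $\ho_{\mathcal{C}} N\eta$ directly in the $\mathcal{C}$-direction at each fixed simplicial level. Realizing the internal coordinate first is precisely what repackages the hypothesis in the form $B\eta$ that the classical hocolim invariance for space-valued diagrams can consume.
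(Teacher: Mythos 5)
Your proposal is correct and is essentially the intended argument: the paper states this as a corollary of Proposition \ref{thomason-grothendieck} precisely because the natural zig-zag there transports the question to the homotopy invariance of $\ho_{\mathcal{C}}BF$ (equivalently, of $B(\mathcal{C}\wr BF)$, which visibly depends only on the space-valued functor $BF$). Your remark on why one must realise the internal simplicial coordinate first --- since weak equivalences in $\mathcal{C}at(\mathcal{T}op)$ are detected only after applying $B$, not levelwise on nerves --- is exactly the right point of care, and the cofibrancy hypothesis on $\Ob\mathcal{C}\hookrightarrow\Mor\mathcal{C}$ is indeed what makes the relevant simplicial spaces good in the sense of \cite[Appendix A]{Se3}.
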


\subsection{The little comma categories} \label{cob2.1} For every pair of objects $c$ and $c'$ in $\mathcal{C}$, consider the topological category $c \downarrow_{\mathcal{C}} c'$ whose objects are the morphisms $c \rightarrow c'$ in $\mathcal{C}$, and a morphism from $c \stackrel{u}{\to} c'$ to $c \stackrel{v}{\to} c'$ is given by a morphism $c \stackrel{f}{\to} c$ over $c'$, i.e., so that the triangle
\begin{displaymath}
\xymatrix{
c \ar[r]^f \ar[d]^u  & c \ar[dl]^v \\
c' & }
\end{displaymath}
commutes in $\mathcal{C}$. The composition of morphisms in $c \downarrow_{\mathcal{C}} c'$ is defined by the composition of morphisms in $\mathcal{C}$. The category $c \downarrow_{\mathcal{C}} c'$ is a subcategory of the category of morphisms $\mathcal{C}^{\to}$ with the induced subspace topology. 

 For every object $c \in \Ob\mathcal{C}$, there is a continuous functor
\begin{displaymath}
\mathcal{F}_c: \mathcal{C} \rightarrow \mathcal{C}at(\mathcal{T}op)
\end{displaymath}
which is defined on objects by
\begin{displaymath}
c' \mapsto c \downarrow_{\mathcal{C}} c'.
\end{displaymath}
Every morphism $c' \stackrel{g}{\to} c''$ in $\mathcal{C}$ induces a continuous functor $g_{\ast}: c \downarrow_{\mathcal{C}} c' \rightarrow c \downarrow_{\mathcal{C}} c''$ by post-composing with $g$.  Let $\End_{\mathcal{C}}(c)=\mathrm{Hom}_{\mathcal{C}}(c,c)$ denote the topological monoid of endomorphisms of $c \in \Ob\mathcal{C}$. Note that the little comma category $c \downarrow_{\mathcal{C}} c'$ is the transport category associated to the right action of $\End_{\mathcal{C}}(c)$ on the space $\mathcal{C}(c,c')$.

There is a functor
\begin{displaymath}
\xymatrix{
U(c): \int_{\mathcal{C}} \mathcal{F}_c \ar[r] & \End_{\mathcal{C}}(c)
}
\end{displaymath}
that forgets the objects and takes every morphism to the associated endomorphism of $c$.

\begin{lemma}  \label{Y}
The functor $U(c): \int_{\mathcal{C}} \mathcal{F}_c \to \End_{\mathcal{C}}(c)$  is a weak equivalence, i.e., it induces a (weak) homotopy equivalence 
\begin{displaymath}
\xymatrix{
u(c): B (\int_{\mathcal{C}} \mathcal{F}_c)  \ar[r]^-{\simeq} & B \End_{\mathcal{C}}(c).
}
\end{displaymath}
\end{lemma}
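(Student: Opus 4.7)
The plan is to exhibit a continuous functor $V : \End_{\mathcal{C}}(c) \to \int_{\mathcal{C}} \mathcal{F}_c$ that is a strict section of $U(c)$, together with a natural transformation $\eta : V \circ U(c) \Rightarrow \id_{\int_{\mathcal{C}} \mathcal{F}_c}$. Once such data are produced, the standard fact that a natural transformation of continuous functors induces a homotopy on classifying spaces gives $B(V) \circ B(U(c)) = B(V \circ U(c)) \simeq B(\id)$, while $B(U(c)) \circ B(V) = B(U(c) \circ V) = \id$, so $u(c) = B(U(c))$ is a homotopy equivalence.

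I would define $V$ on the unique object $\star$ of $\End_{\mathcal{C}}(c)$ by $V(\star) = (c, \id_c)$, and on a morphism $f \in \End_{\mathcal{C}}(c)$ by $V(f) = (f, f)$, regarded as a morphism $(c, \id_c) \to (c, \id_c)$ in $\int_{\mathcal{C}} \mathcal{F}_c$. Here the first coordinate $f : c \to c$ is a morphism in $\mathcal{C}$, and the second coordinate is the morphism $f : \mathcal{F}_c(f)(\id_c) = f \to \id_c$ in $c \downarrow_{\mathcal{C}} c$, which is valid because $\id_c \cdot f = f$. Functoriality and continuity of $V$ are immediate from the definition, and $U(c) \circ V = \id$ is clear since $U(c)$ picks out the endomorphism coordinate.

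For the natural transformation, I would set $\eta_{(c', u)} := (u, \id_c) : (c, \id_c) \to (c', u)$, which is a valid morphism in $\int_{\mathcal{C}} \mathcal{F}_c$ because $u \cdot \id_c = u$. The assignment $(c', u) \mapsto (u, \id_c)$ is continuous since it is essentially the inclusion $\bigsqcup_{c'} \mathcal{C}(c, c') \hookrightarrow \Mor(\int_{\mathcal{C}} \mathcal{F}_c)$ paired with the constant $\id_c$. The main verification is the naturality square: for a morphism $(g, f) : (c', u) \to (c'', u')$ in $\int_{\mathcal{C}} \mathcal{F}_c$, one must check that the two composites $(g, f) \circ (u, \id_c)$ and $(u', \id_c) \circ (f, f)$ agree in $\int_{\mathcal{C}} \mathcal{F}_c$. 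Unravelling the composition law of the Grothendieck construction, both composites equal $(gu, f) = (u'f, f)$, where the final equality uses the defining condition $u'f = gu$ for $(g, f)$ to be a morphism in $\int_{\mathcal{C}} \mathcal{F}_c$.

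The only substantive obstacle is this last compatibility check, which is essentially bookkeeping with the Grothendieck composition formula $(g', f') \circ (g, f) = (g'g,\, f' \circ \mathcal{F}_c(g')(f))$, and it comes out on the nose because $\mathcal{F}_c(g')$ acts by post-composition with $g'$ on $\mathcal{C}(c, -)$ while leaving the endomorphism components of morphisms in the comma categories unchanged. Given that, the proof is complete: $B(V)$ is a two-sided homotopy inverse of $u(c)$.
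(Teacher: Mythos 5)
Your proposal is correct and coincides with the paper's second proof of Lemma \ref{Y}: your $V$ is the section $G$ there, your $\eta$ is the natural transformation $\rho: GU \to 1$, and the naturality check you carry out is exactly the content of the asserted adjunction $(G,U)$. Nothing is missing.
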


We will give two proofs of Lemma \ref{Y}. 

\begin{proof}[First Proof] Let $X_{**}$ denote the Bousfield-Kan homotopy colimit of the diagram $N \mathcal{F}_c: \mathcal{C} \to s\mathcal{T}op$. A $(p,q)$-simplex is a pair
\begin{displaymath}
(c_0 \to ... \to c_p \in N_p \mathcal{C},  c \to ... \to c \to c_0 \in N_q(c \downarrow_{\mathcal{C}} c_0))
\end{displaymath}
and the face and degeneracy maps are given by composition and insertion of identities respectively. The only non-obvious face map is the following:
\begin{displaymath}
d_0 \times d_q(c_0 \stackrel{f_1}{\to} ... \to c_p, c \to ... \stackrel{\alpha_q}{\to} c \stackrel{f}{\to} c_0)=(c_1 \to ... \to c_p,  c \to ...  \stackrel{f_1 f \alpha_q}{\longrightarrow} c_1).
\end{displaymath}
By Proposition \ref{thomason-grothendieck}, there is a weak homotopy equivalence $\theta: |X_{**}| \to B( \int_{\mathcal{C}}\mathcal{F}_c)$.  We claim that there is also a weak homotopy equivalence $\psi: |X_{**}| \to B\End_{\mathcal{C}}(c)$.  Let $Y_{**}$ be a bisimplicial space whose $(p,q)$-simplices are the pairs
\begin{displaymath}
(c \to c_0 \to ... \to c_p,  c \stackrel{\alpha_1}{\to} ... \stackrel{\alpha_q}{\to} c)
\end{displaymath}
and the face and degeneracy maps are such that the maps
\begin{displaymath}
\beta_{pq} : X_{pq} \rightarrow Y_{pq}
\end{displaymath} 
\begin{displaymath}
(c_0 \stackrel{f_1}{\to} ... \to c_p, c \to ... \stackrel{\alpha_q}{\to} c \stackrel{f}{\to} c_0) \mapsto (c \stackrel{f}{\to} c_0 \stackrel{f_1}{\to} ... \to c_p,  c \stackrel{\alpha_1}{\to} ... \stackrel{\alpha_q}{\to} c)
\end{displaymath}
define an isomorphism of bisimplicial spaces. Let $N \End_{\mathcal{C}}(c)$ be the the nerve of $\End_{\mathcal{C}}(c)$ considered as a bisimplicial space that is constant in one direction and consider the projection on the $q$-direction $\psi: Y_{**} \to N \End_{\mathcal{C}}(c)$. If we realise with respect to the $p$-direction, we obtain a map of 
simplicial spaces that is given levelwise by the projections
\begin{displaymath}
B(c \downarrow \mathcal{C}) \times N_q \End_{\mathcal{C}}(c) \to N_q \End_{\mathcal{C}}(c).
\end{displaymath} 
Since the category $c \downarrow \mathcal{C}$ has an initial object, its classifying space is contractible and so the projections are homotopy equivalences. Therefore the bisimplicial map $\psi$ is a weak homotopy equivalence. It can be checked easily that the map $u(c) \theta$ is homotopic to $\psi$. Hence it follows that $u(c)$ is a weak homotopy equivalence.
\end{proof}

\begin{proof}[Second Proof] There is a functor $G: \End_{\mathcal{C}}(c) \to \int_{\mathcal{C}} \mathcal{F}_c$ defined as follows: the unique object of $\End_{\mathcal{C}}(c)$ is sent to $(c, 1_c)$, and an endomorphism $\sigma: c \to c$ is sent to $(\sigma, \sigma)$, i.e., the morphism
\begin{displaymath}
\xymatrix{
c \ar[d]^{1} \ar[r]^{\sigma} & c \ar[d]^{1} \\
c \ar[r]^{\sigma} & c 
}
\end{displaymath}
Then we have $U G = 1$, and therefore $(BU)(BG) = 1$. Furthermore, the pair $(G,U)$ defines an continuous adjunction. In particular, there is a natural transformation $\rho: G U \rightarrow 1$ whose components are defined as follows: given $(c', \alpha) \in \Ob \int_{\mathcal{C}} \mathcal{F}_c$, $\rho_c$ is $(\alpha, 1_c)$, i.e., the morphism represented by the diagram
\begin{displaymath}
\xymatrix{
c \ar[r]^{1} \ar[d]^{1}  & c \ar[d]^{\alpha} \\
c \ar[r]^{\alpha} & c' 
}
\end{displaymath}
It is well-known that a natural transformation of functors induces a homotopy between the maps of classifying spaces (e.g. see \cite{Se}). Therefore $(BG) (BU) \simeq 1$, and the result follows.
\end{proof}

\begin{remark} Lemma \ref{Y} may be compared with the classical Yoneda lemma of category theory  (see also Proposition \ref{superyoneda}). The Yoneda lemma says that, for any (ordinary, locally small) category $C$, the canonical functor $Y: C^{op} \rightarrow Fun(C, \mathcal{S}et)$, defined on objects by $c \mapsto (c' \mapsto $Hom$_{C}(c,c'))$, is fully faithful. As a consequence, if two representable functors are naturally isomorphic then they are represented by isomorphic objects. The functor $\mathcal{F}_c$ may be regarded as a topological extension of the representable functor represented by $c \in \Ob C$. Lemma \ref{Y} implies that two such functors are weakly equivalent only if the elements that represent them have weakly equivalent endomorphism monoids. 
\end{remark}

\begin{remark} When $\mathcal{C}$ is an ordinary small category, Lemma \ref{Y} can be seen as an application of Thomason's theorem \cite{T1} and the following central idea from Quillen's Theorem B \cite{Q}. This says that, given a morphism $F: C \to D$ in $\mathcal{C}at$, the canonical map $\ho_{d \in D} B( C \downarrow d) \to BC$ is a weak homotopy equivalence. The last lemma is an application of this statement to the inclusion $\End_{\mathcal{C}}(c) \to \mathcal{C}$. Note that the second proof actually shows that the map $u(c)$ is always a homotopy equivalence, 
and not just a weak one.
\end{remark}

\subsection{Stability} \label{cob2.2} While a morphism $\alpha: c' \rightarrow c$ always induces a function $\Ob(c \downarrow_{\mathcal{C}} d) \rightarrow \Ob(c' \downarrow_{\mathcal{C}} d)$ which is natural in $d$, it is not true that this function extends canonically to a functor. In other words, there is no canonical way to choose a natural transformation $\mathcal{F}_c \rightarrow \mathcal{F}_{c'}$ for every morphism $c' \rightarrow c$. 

A natural transformation $\eta: \mathcal{F}_c  \rightarrow \mathcal{F}_{c'}$ is uniquely specified by the following data (see Proposition \ref{superyoneda}): a morphism $\alpha: c' \rightarrow c$ and a continuous homomorphism $h : \End_{\mathcal{C}}(c) \rightarrow \End_{\mathcal{C}}(c')$ between topological monoids with the compatibility condition that 
\begin{displaymath}
\xymatrix{
c' \ar[r]^{h(\sigma)} \ar[d]^{\alpha} & c' \ar[d]^{\alpha} \\
c \ar[r]^{\sigma} & c
}
\end{displaymath}
commutes for all $\sigma \in \End_{\mathcal{C}}(c)$. The morphism $\alpha$ comes from the Yoneda lemma and the homomorphism $h$ is determined by $\eta$ as follows: 
\begin{displaymath}
\eta_c(\sigma \stackrel{\sigma}{\to} 1_c) = (\sigma \circ \alpha) \stackrel{h(\sigma)}{\longrightarrow} (1_c \circ \alpha). 
\end{displaymath}

\begin{definition} Say that $\mathcal{C}$ stabilises along a subcategory $\mathcal{J}$ if for every $j \stackrel{\alpha}{\to} j'$ in $\mathcal{J}$, there is a continuous assignment of a continuous homomorphism $\alpha^*: \End_{\mathcal{C}}(j') \rightarrow \End_{\mathcal{C}}(j)$ such that 
\begin{itemize}
\item[(i)] the square
\begin{displaymath}
\xymatrix{
j \ar[r]^{ \alpha^*(\sigma)} \ar[d]^{\alpha} & j \ar[d]^{\alpha} \\
j' \ar[r]^{\sigma} & j'
}
\end{displaymath}
commutes for all $\sigma \in \End_{\mathcal{C}}(j')$,
\item[(ii)] $(\beta \alpha)^* = \alpha^* \beta^*$ and $1^* = 1$.
\end{itemize}
\end{definition}

\begin{example} \label{example2.1} Let $\mathcal{C}$ be a (strict) symmetric monoidal category with unit object $1$. Every morphism $f: c \to 1$ in $\mathcal{C}$ generates a subcategory $\mathcal{I}(f)$ of $\mathcal{C}$ defined to be the smallest subcategory that contains the morphisms $1^{\otimes (n -1)} \otimes f: c^{\otimes n} \to c^{\otimes n-1}$. Let $\End_n(c)$ denote the monoid of endomorphisms of the object $c^{\otimes n}$. For every $n \geq 1$, there is a  homomorphism $\End_n(c) \to \End_{n+1}(c)$ given by tensoring with the identity $1_c: c \to c$.  Then it is easy to see that $\mathcal{C}$ stabilises along $\mathcal{I}(f)$ for every such $f$. This is the main example and it covers all the cases that we will be interested in.
\end{example}

Suppose that $\mathcal{C}$ stabilises along a subcategory $\mathcal{J}$. Then there is a functor $\mathcal{J}^{op} \times \mathcal{C} \rightarrow \mathcal{C}at(\mathcal{T}op)$ that is defined on objects by $(j,c) \mapsto j \downarrow_{\mathcal{C}} c$.  Applying the Grothendieck construction with respect to $\mathcal{J}^{op}$, we obtain a functor 
\begin{displaymath}
\mathcal{G}_{\mathcal{J}}: \mathcal{C} \rightarrow \mathcal{C}at(\mathcal{T}op)
\end{displaymath}
which is defined on objects by
\begin{displaymath}
\xymatrix{
c \mapsto \int_{\mathcal{J}^{op}} ( - \downarrow_{\mathcal{C}} c)=  \int_{\mathcal{J}^{op}} \mathcal{F}_{-} (c). 
}
\end{displaymath}

\begin{definition} Suppose that $\mathcal{C}$ stabilises along a subcategory $\mathcal{J}$. Say that $\mathcal{C}$ is $H \Z$-stable along $\mathcal{J}$ if for every morphism $u: c \to c'$ in $\mathcal{C}$, the map $B\mathcal{G}_{\mathcal{J}}(u)$ is an integral homology equivalence. 
\end{definition}

Similarly we can define $\mathcal{W}$-stability with respect to any class of weak equivalences $\mathcal{W}$ in $\mathcal{T}op$. The corresponding theory for more general classes 
$\mathcal{W}$ will be sketched in the appendix.

\begin{definition} Let $F \stackrel{i}{\to} E \stackrel{p}{\to} B$ be a sequence of maps together 
with a homotopy from the composite $p i$ to the constant map at $b \in B$. Say that it is a homology fiber sequence 
if the canonical map $F \to \hof_b(p)$ is an integral homology equivalence. 
\end{definition}

In the following results of the section, we also \textit{assume} that the space $\Mor \mathcal{C}$ is locally contractible and paracompact. It is always possible to replace $\mathcal{C}$ by a topological category that has these properties (see Remark \ref{technical-assumption}), so this assumption is only a matter of convenience here.

\begin{theorem} \label{GGCT}
If $F: \mathcal{C} \to \mathcal{C}at(\mathcal{T}op)$ is a continuous functor such that $BF(f)$ is a homology equivalence for every $f \in \Mor\mathcal{C}$,  then the sequence
\begin{displaymath}
\xymatrix{
(B F)(c) \ar[r] &  B(\int_{\mathcal{C}} F) \ar[r] & B\mathcal{C}
}
\end{displaymath}
is a homology fiber sequence. 
\end{theorem}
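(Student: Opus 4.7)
The approach is to establish this as an instance of the homological version of Quillen's Theorem B applied to the projection functor $p_F: \int_{\mathcal{C}} F \to \mathcal{C}$. Since the strict fiber of $p_F$ over $c$ is $F(c)$, the task reduces to showing that the inclusion of the strict fiber $BF(c)$ into the homotopy fiber of $Bp_F$ over $c$ is an integral homology equivalence.

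The first step is to identify the standard Theorem-B candidate for the homotopy fiber, namely the comma category $c \downarrow p_F$. Unwinding definitions gives a natural isomorphism $c \downarrow p_F \cong \int_{c \downarrow_{\mathcal{C}} \mathcal{C}} \tilde F$, where $\tilde F$ is the composite of $F$ with the target projection $c \downarrow_{\mathcal{C}} \mathcal{C} \to \mathcal{C}$. By Proposition \ref{thomason-grothendieck}, the classifying space of this Grothendieck construction is weakly equivalent to $\ho_{c \downarrow_{\mathcal{C}} \mathcal{C}} B\tilde F$. Now $c \downarrow_{\mathcal{C}} \mathcal{C}$ has the initial object $(c \xrightarrow{\id_c} c)$, so $B(c \downarrow_{\mathcal{C}} \mathcal{C})$ is contractible; the value of $B\tilde F$ at this initial object is $BF(c)$; and every structural map of $B\tilde F$ is of the form $BF(f)$ for some $f \in \Mor\mathcal{C}$, hence is an integral homology equivalence by hypothesis. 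Running the Bousfield--Kan spectral sequence
\begin{equation*}
E^2_{p,q} = H_p(c \downarrow_{\mathcal{C}} \mathcal{C}; \mathcal{H}_q B\tilde F) \Rightarrow H_{p+q}(\ho_{c \downarrow_{\mathcal{C}} \mathcal{C}} B\tilde F),
\end{equation*}
the hypothesis makes $\mathcal{H}_q B\tilde F$ a constant coefficient system, and the contractibility of $B(c \downarrow_{\mathcal{C}} \mathcal{C})$ collapses the $E^2$-page to $H_*(BF(c))$ concentrated in $p=0$. Hence the canonical map $BF(c) \to B(c \downarrow p_F)$ is an integral homology equivalence.

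To close the argument, I would apply a homological version of Quillen's Theorem B to $p_F$ itself. The statement needed is: $Bp_F$ is an integral homology fibration, with $B(c \downarrow p_F)$ computing its homotopy fiber up to homology, provided the coefficient system $c \mapsto B(c \downarrow p_F)$ is locally constant in homology along $\mathcal{C}$. By the previous step, this system is naturally homology equivalent to $c \mapsto BF(c)$ with structural maps $BF(f)$, which are homology equivalences by hypothesis; so the local constancy is precisely the hypothesis of the theorem. Combining this conclusion with the homology equivalence $BF(c) \simeq B(c \downarrow p_F)$ from the previous step yields the required homology fiber sequence.

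The main obstacle is the invocation of the topological, homological version of Quillen's Theorem B. For ordinary (discrete) categories this is a classical bisimplicial Serre-spectral-sequence argument, in the spirit of the second remark after Lemma \ref{Y}. In the topological setting, the same strategy goes through, but it is here that the technical assumptions of the section earn their keep: discreteness of $\Ob\mathcal{C}$, the cofibration condition on $\Ob\mathcal{C} \hookrightarrow \Mor\mathcal{C}$, and the paracompactness and local contractibility of $\Mor\mathcal{C}$ are precisely what ensures that the relevant geometric realizations behave as expected and that the local-to-global passage from simplex-wise homology equivalences to a global homology fibration is valid.
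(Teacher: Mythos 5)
Your reduction in the first step is sound as far as it goes: the identification $c \downarrow p_F \cong \int_{c \downarrow_{\mathcal{C}} \mathcal{C}} \tilde F$ is correct, $c\downarrow_{\mathcal{C}}\mathcal{C}$ has an initial object, and comparing $c \mapsto B(c\downarrow p_F)$ with $c\mapsto BF(c)$ via the evident natural transformation does transport the hypothesis into the local-constancy condition you need. The proof is nevertheless incomplete, because the result you lean on at the end --- a homological Quillen Theorem B for \emph{topological} categories --- is not an available black box; in this setting it carries essentially the entire content of the theorem being proved. The paper argues in the opposite direction: it proves the fibration statement directly (Theorem \ref{W-fiber-seq1}) by inducting over the skeleta of the thick realisation $\| N\mathcal{C} \|$, using that homology equivalences are closed under homotopy pushouts and directed colimits and that being a homology fibration is a local property over a locally contractible paracompact base (Proposition \ref{local-global}, after McDuff--Segal and Pitsch--Scherer); the Theorem-B-type statement is then deduced in the appendix as a \emph{corollary} of the fibration theorem, not used to prove it. So the ``main obstacle'' you flag is not a technical wrinkle that ``goes through'' by a classical bisimplicial spectral-sequence comparison: for homology fibrations the classical argument \emph{is} this skeletal, local-to-global induction, and supplying it is the proof.

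A secondary issue: both Proposition \ref{thomason-grothendieck} and the Bousfield--Kan spectral sequence with $E^2_{p,q}=H_p(-;\mathcal{H}_q)$ are stated (and true in the form you use them) only for an indexing category with a \emph{discrete} object space. The category $c\downarrow_{\mathcal{C}}\mathcal{C}$ has object space $\bigsqcup_{d}\mathcal{C}(c,d)$, which is not discrete in the cases of interest (e.g.\ for $\mathcal{A}_{g,d}$, where the morphism spaces are classifying spaces of mapping class groups). This step is repairable --- the first proof of Lemma \ref{Y} handles exactly such a homotopy colimit over a topological comma category by an explicit bisimplicial manipulation, and Lemma \ref{contractible diagrams} is the local statement you want, though it too is stated for discrete $\mathcal{I}$ --- but as written the collapse of the spectral sequence is not justified by the tools you cite.
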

\begin{proof}
This is a version of the ``generalised group completion theorem'' of \cite[Theorem 3.2]{Ti} (see also \cite[Proposition 7.1]{GMTW}). A proof of a more general statement will be given in Theorem \ref{W-fiber-seq1} of the appendix. 
\end{proof}

The next theorem is the main result that we will need in the following sections.  

\begin{theorem} \label{ggct}
Suppose that $\mathcal{C}$ is $H\Z$-stable along a subcategory $\mathcal{J}$. Then there is a homology fiber sequence 
\begin{displaymath}
\xymatrix{
B\mathcal{G}_{\mathcal{J}}(c)  \ar[r] & B( \int_{\mathcal{J}^{op}} \End_{\mathcal{C}}(-))  \ar[r] & B\mathcal{C} \\
}
\end{displaymath}
and $\ho_{\mathcal{J}^{op}} B \End_{\mathcal{C}}(-) \stackrel{\simeq}{\to} B( \int_{\mathcal{J}^{op}} \End_{\mathcal{C}}(-))$.
\end{theorem}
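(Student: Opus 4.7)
The plan is to apply Theorem \ref{GGCT} to the functor $\mathcal{G}_{\mathcal{J}}$ and then identify the resulting total space $B(\int_{\mathcal{C}} \mathcal{G}_{\mathcal{J}})$ with $B(\int_{\mathcal{J}^{op}} \End_{\mathcal{C}}(-))$ by a Fubini-type swap of iterated Grothendieck constructions combined with Lemma \ref{Y}. The second assertion will then follow as an immediate instance of Proposition \ref{thomason-grothendieck}.

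For the first step, observe that the $H\Z$-stability hypothesis is, by definition, exactly the hypothesis of Theorem \ref{GGCT} applied to $F = \mathcal{G}_{\mathcal{J}}$: the map $B\mathcal{G}_{\mathcal{J}}(u)$ is an integral homology equivalence for every morphism $u$ in $\mathcal{C}$. Applying Theorem \ref{GGCT} directly yields, for any chosen $c \in \Ob\mathcal{C}$, a homology fiber sequence
\begin{displaymath}
B\mathcal{G}_{\mathcal{J}}(c) \longrightarrow B\Big(\int_{\mathcal{C}} \mathcal{G}_{\mathcal{J}}\Big) \longrightarrow B\mathcal{C}.
\end{displaymath}

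Next I would identify the middle term. Unwinding the definition $\mathcal{G}_{\mathcal{J}}(c) = \int_{\mathcal{J}^{op}} \mathcal{F}_{-}(c)$, the total category $\int_{\mathcal{C}} \mathcal{G}_{\mathcal{J}}$ is an iterated Grothendieck construction whose objects are triples $(c, j, \alpha \colon j \to c)$. A direct inspection of morphisms, which uses the stability structure to regard $\mathcal{F}_{-}$ as a functor of its first variable on $\mathcal{J}^{op}$, produces a natural isomorphism
\begin{displaymath}
\int_{\mathcal{C}} \mathcal{G}_{\mathcal{J}} \;\cong\; \int_{\mathcal{J}^{op}} \Big(\int_{\mathcal{C}} \mathcal{F}_{(-)}\Big).
\end{displaymath}
Lemma \ref{Y} then provides, for each $j \in \mathcal{J}$, a weak equivalence $U(j) \colon \int_{\mathcal{C}} \mathcal{F}_j \to \End_{\mathcal{C}}(j)$. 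The stability axioms (i) and (ii) ensure that as $j$ varies these maps assemble into a pointwise weak equivalence of functors $\mathcal{J}^{op} \to \mathcal{C}at(\mathcal{T}op)$. Corollary \ref{grothendieck-thomason2} then upgrades this to a weak equivalence
\begin{displaymath}
\int_{\mathcal{J}^{op}} \Big(\int_{\mathcal{C}} \mathcal{F}_{(-)}\Big) \stackrel{\simeq}{\longrightarrow} \int_{\mathcal{J}^{op}} \End_{\mathcal{C}}(-).
\end{displaymath}
Combining with the first step gives the desired homology fiber sequence, and Proposition \ref{thomason-grothendieck} applied to $\End_{\mathcal{C}}(-) \colon \mathcal{J}^{op} \to \mathcal{C}at(\mathcal{T}op)$ (viewing each monoid as a one-object category) yields the second assertion.

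The main obstacle I anticipate is the precise bookkeeping in the Fubini identification and in the naturality of $U(j)$ in $j$. Both rest on the stability axioms: condition (i) produces the natural transformations $\mathcal{F}_{j'} \to \mathcal{F}_j$ covering $\alpha \colon j \to j'$ and forces the relevant naturality squares to commute on the nose, while condition (ii) delivers strict functoriality in $\mathcal{J}^{op}$. Once these identifications are pinned down strictly—so that the cited corollary applies to an honest pointwise weak equivalence—the remainder of the argument is a formal assembly of the preceding results.
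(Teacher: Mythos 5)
Your proposal is correct and follows essentially the same route as the paper's proof: apply Theorem \ref{GGCT} to $\mathcal{G}_{\mathcal{J}}$, use the Fubini property of the Grothendieck construction together with Lemma \ref{Y} and Corollary \ref{grothendieck-thomason2} to identify the total category with $\int_{\mathcal{J}^{op}} \End_{\mathcal{C}}(-)$, and invoke Proposition \ref{thomason-grothendieck} for the final statement. Your added remark about checking that the equivalences $U(j)$ are natural in $j$ via the stability axioms is a point the paper leaves implicit, but it is not a different argument.
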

\begin{proof}  According to Theorem \ref{GGCT}, there is a homology fiber sequence
\begin{displaymath}
\xymatrix{
(B \mathcal{G}_{\mathcal{J}})(c) \ar[r] & B(\int_{\mathcal{C}} \mathcal{G}_{\mathcal{J}}) \ar[r] & B\mathcal{C}. \\
}
\end{displaymath}
The Grothendieck construction satisfies the following commutativity property 
\begin{displaymath}
\xymatrix{
\int_{\mathcal{C}} \mathcal{G}_{\mathcal{J}} = \int_{\mathcal{C}} \int_{\mathcal{J}^{op}} \mathcal{F}_{-} (-) = \int_{\mathcal{J}^{op}} \int_{\mathcal{C}} \mathcal{F}_{-} (-) \\
}
\end{displaymath}
and so by Lemma \ref{Y} and Corollary \ref{grothendieck-thomason2} there is a weak homotopy equivalence
\begin{displaymath}
\xymatrix{
B( \int_{\mathcal{J}^{op}} \int_{\mathcal{C}} \mathcal{F}_{-} (-)) \ar[r]^-{\simeq} & B(\int_{\mathcal{J}^{op}} \End_{\mathcal{C}}(-)) .\\
}
\end{displaymath}
Hence there is a homology fiber sequence
\begin{displaymath}
\xymatrix{
B\mathcal{G}_{\mathcal{J}}(c) \ar[r] & B(\int_{\mathcal{J}^{op}} \End_{\mathcal{C}}(-)) \ar[r] & B\mathcal{C}
}
\end{displaymath}
as required, and it is induced by the functors
\begin{displaymath}
j \downarrow_{\mathcal{C}} c \rightarrow \End_{\mathcal{C}}(j) \rightarrow \mathcal{C}
\end{displaymath}
as can be seen from the proof of Lemma \ref{Y}. The last statement is a consequence of the homotopical property of the Grothendieck construction as discussed in Proposition \ref{thomason-grothendieck}.
\end{proof}

\begin{example} \label{example2.2} Consider the situation of Example \ref{example2.1}. Let $B\End_{\infty}(c) = \ho_n B\End_n(c)$. The monoid $\End_n(c)$ acts naturally on the space Hom$_{\mathcal{C}}(c^{\otimes n},1)$. Let $\mathcal{F}_n$ denote the homotopy quotient of the action. There are canonical maps $\mathcal{F}_n \to \mathcal{F}_{n+1}$, and so let $\mathcal{F}_{\infty} = \ho_n \mathcal{F}_n$. The last theorem gives sufficient conditions for the sequence $\mathcal{F}_{\infty} \to B\End_{\infty}(c) \to B\mathcal{C}$ to be a homology fiber sequence. 
\end{example}

\begin{remark}
The last theorem generalises in two ways (see Theorem \ref{W-fiber-seq2}): first, the collection of inclusions $\End_{\mathcal{C}}(j) \to \mathcal{C}$ can be replaced by a collection of functors $\mathcal{D}_j \to \mathcal{C}$ that are suitably connected together into a diagram of categories over $\mathcal{C}$. Second, the class of homology equivalences can be replaced by any class of weak equivalences that satisfies certain local-to-global properties. 
\end{remark}

\subsection{The plus construction} \label{cob2.3}
We briefly recall some facts about Quillen's plus construction.  For a detailed account of the construction and its properties, see for example \cite{Be}. Throughout this subsection, we work with non-degenerately based connected spaces $X$ that have the homotopy type of a CW-complex. 

For every perfect normal subgroup $P$ of $\pi_1(X)$, there is a space $X_P$ and an acyclic cofibration $q_P: X \rightarrow X_P$ with $\mathrm{ker}\pi_1 (q_P) = P$. The space $X_P$ is uniquely determined up to homotopy equivalence. The plus construction of $X$, denoted by $X^+$, is the space associated with the maximal perfect subgroup of $\pi_1(X)$, denoted by $P\pi_1(X)$. The plus construction is a functorial assignment: given a map $f: X \rightarrow Y$, there is a unique homotopy class of maps $f^+: X^+ \rightarrow Y^+$ that makes the following square
\begin{displaymath}
\xymatrix{
X \ar[r]^f \ar[d] & Y \ar[d] \\
X^+ \ar[r]^{f^+} & Y^+ 
}
\end{displaymath} 
commute up to homotopy. If $f: X \rightarrow Y$ is an acyclic map and the maximal perfect subgroup of $\pi_1(X)$ is trivial, then $f$ is a homotopy equivalence. In particular, $f^+$ is a homotopy equivalence for every acyclic map $f$.

\begin{proposition}  \label{L1}
If $F \rightarrow  E \rightarrow  B $ is a homotopy fiber sequence of connected spaces and $P\pi_1(B)=1$, then $F^+ \rightarrow  E^+ \rightarrow B$ is also a homotopy fiber sequence.
\end{proposition}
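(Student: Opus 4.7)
The plan is to construct a lift $p^{+} \colon E^{+} \to B$ of the projection $p \colon E \to B$, set $F' := \hof_{b}(p^{+})$, and then show that the induced map $i' \colon F \to F'$ exhibits $F'$ as the plus construction of $F$. The lift exists and is unique up to homotopy by the universal property of the plus construction recalled above: a map $E \to Y$ factors through $q_{E} \colon E \to E^{+}$ if and only if it kills $P\pi_{1}(E)$. For the map $p$, the image $p_{*}(P\pi_{1}(E))$ is a perfect subgroup of $\pi_{1}(B)$ and hence trivial by the hypothesis $P\pi_{1}(B) = 1$, so the factorisation $p \simeq p^{+} \circ q_{E}$ exists.

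Since taking the homotopy fibre over $b$ commutes with homotopy pullback, the resulting square
\[
\xymatrix{
F \ar[r] \ar[d]_{i'} & E \ar[d]^{q_{E}} \\
F' \ar[r] & E^{+}
}
\]
is a homotopy pullback. Acyclic maps are stable under homotopy pullback: the homotopy fibre of $i'$ over any point of $F'$ coincides with the homotopy fibre of $q_{E}$ over its image in $E^{+}$, and so has trivial integral homology because $q_{E}$ is acyclic. Consequently $i' \colon F \to F'$ is itself acyclic.

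To identify $F'$ with $F^{+}$ via the uniqueness of the plus construction, it suffices to verify that $P\pi_{1}(F') = 1$ and that $\ker(\pi_{1}(i')) = P\pi_{1}(F)$. For the first, the long exact homotopy sequence of $p^{+}$ realises $K := \ker(\pi_{1}(F') \to \pi_{1}(E^{+}))$ as a quotient of $\pi_{2}(B)$, hence abelian; since $E^{+}$ is plus-constructed we have $P\pi_{1}(E^{+}) = 1$, so any perfect subgroup of $\pi_{1}(F')$ maps to the trivial subgroup of $\pi_{1}(E^{+})$, lies inside the abelian group $K$, and must therefore be trivial. For the second, the $\pi_{1}$-kernel of an acyclic map is a perfect normal subgroup, so $\ker(\pi_{1}(i')) \subseteq P\pi_{1}(F)$; conversely, the image of $P\pi_{1}(F)$ in $\pi_{1}(F')$ is a perfect subgroup and hence trivial by the previous point. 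The uniqueness of the plus construction then yields a homotopy equivalence $F^{+} \xrightarrow{\simeq} F'$ and the desired homotopy fibre sequence $F^{+} \to E^{+} \to B$.

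The only non-formal step is the construction of the lift $p^{+}$, which is the single place where the hypothesis $P\pi_{1}(B) = 1$ enters the argument. Everything thereafter is a routine consequence of the universal property of the plus construction, the preservation of acyclic maps under homotopy pullback, and the long exact sequence of homotopy groups of a fibration.
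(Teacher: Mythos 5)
Your argument is correct. The paper offers no proof of this proposition beyond a citation of Berrick's Theorem~6.4, and what you have written is essentially the standard argument behind that reference: factor $p$ through $q_E$, observe that the two homotopy fibres assemble into a homotopy pullback square with $q_E$, deduce that $i'\colon F\to F'$ is acyclic, and pin down its $\pi_1$-kernel. Three points that you use implicitly deserve to be made explicit. First, the lifting step needs more than the bare functoriality of the plus construction that the paper recalls: it needs the universal property of acyclic maps (a map out of $E$ factors up to homotopy through $q_E$ if and only if its $\pi_1$-kernel contains $P\pi_1(E)$), which is precisely Berrick's cofinality theorem for acyclic maps; this is the genuinely non-formal input. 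Second, $F'$ is connected --- needed to speak of \emph{the} homotopy fibre, to run the long exact sequence as you do, and to invoke uniqueness of the plus construction for connected spaces --- and this follows because $\pi_1(E)\to\pi_1(B)$ is onto ($F$ being connected) and $\pi_1(E)\to\pi_1(E^+)$ is onto. Third, $P\pi_1(E^+)=1$, i.e.\ $G/PG$ has trivial perfect radical; this is standard (an extension of a perfect group by a perfect group is perfect, so any perfect subgroup of $G/PG$ pulls back to a perfect subgroup of $G$). With these in place, your identification of $F'$ with $F^+$ via the uniqueness clause of the plus construction is exactly right, and the argument is complete.
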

\begin{proof} 
See \cite[Theorem 6.4]{Be}.
\end{proof}

\begin{proposition} \label{+}
Let $F \to E \to B$ be a homology fiber sequence of connected spaces. If 
\begin{itemize}
\item[(a)] $P\pi_1(B)=1$ and
\item[(b)] $E^+$ and $F^+$ are nilpotent spaces,
\end{itemize}
then $F^+ \to E^+ \to B$ is a homotopy fiber sequence.
\end{proposition}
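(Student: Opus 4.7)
The plan is to bootstrap from Proposition~\ref{L1} by replacing $F$ with the actual homotopy fiber of $p$, and then to transfer the resulting homotopy fiber sequence back to $F^+$ using the nilpotency hypotheses.

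First, set $\widetilde{F} := \hof_b(p)$, so that by the definition of a homology fiber sequence the canonical comparison map $\varphi : F \to \widetilde{F}$ is an integral homology equivalence. In particular, since $F$ is connected, so is $\widetilde{F}$ (they share $H_0$), and Proposition~\ref{L1} applied to the genuine homotopy fiber sequence $\widetilde{F} \to E \to B$ produces a homotopy fiber sequence
\begin{displaymath}
\widetilde{F}^+ \to E^+ \to B.
\end{displaymath}
Functoriality of the plus construction applied to $\varphi$ then yields a map $\varphi^+ : F^+ \to \widetilde{F}^+$; combined with the fact that $F \to F^+$ and $\widetilde{F} \to \widetilde{F}^+$ are acyclic, a two-out-of-three argument forces $\varphi^+$ to be a homology equivalence too.

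What remains is to promote $\varphi^+$ from a homology equivalence to a weak equivalence; once this is done, $F^+$ is identified with the homotopy fiber of $E^+ \to B$, completing the proof. I would appeal to the Whitehead theorem for nilpotent spaces: a homology equivalence between nilpotent connected CW complexes is a weak homotopy equivalence. The source $F^+$ is nilpotent by hypothesis (b), so the task reduces to showing that $\widetilde{F}^+$ is also nilpotent. For this, observe first that the connectedness of $\widetilde{F}$ forces $\pi_1(E) \twoheadrightarrow \pi_1(B)$, and hence $\pi_1(E^+) \twoheadrightarrow \pi_1(B)$; since $\pi_1(E^+)$ is nilpotent, so is its quotient $\pi_1(B)$. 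Using the long exact sequence of the homotopy fibration $\widetilde{F}^+ \to E^+ \to B$, together with the nilpotent action of $\pi_1(E^+)$ on the higher homotopy of $E^+$ and of $\pi_1(B)$ on the higher homotopy of $B$, one argues inductively up the Postnikov tower that $\widetilde{F}^+$ is nilpotent.

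The main obstacle is this last step: the nilpotency of a fiber does not in general follow from nilpotency of the total space. The argument I envision exploits that $\varphi^+$ is already known to be a \emph{homology} equivalence, so the homology of $\widetilde{F}^+$ is dictated by the nilpotent space $F^+$; combined with the nilpotent structure of $E^+$ and the induced nilpotency of $\pi_1(B)$, this pins down the $\pi_1$-actions on the groups $\pi_n(\widetilde{F}^+)$ tightly enough to force nilpotency, after which the Whitehead theorem concludes.
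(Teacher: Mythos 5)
Your overall strategy is exactly the paper's: replace $F$ by the genuine homotopy fiber $\widetilde F=\hof_b(p)$, apply Proposition \ref{L1} to get the homotopy fiber sequence $\widetilde F^+\to E^+\to B$, observe that $\varphi^+:F^+\to\widetilde F^+$ is a homology equivalence, and conclude by the generalized Whitehead theorem once both sides are known to be nilpotent. Up to that point everything you write is correct.

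The one genuine gap is the step you yourself flag as ``the main obstacle.'' Your premise there is wrong: for a fibration of \emph{connected} spaces, nilpotency of the fiber \emph{does} follow from nilpotency of the total space. This is a standard theorem (Dror; see also Hilton--Mislin--Roitberg, Thm.\ II.2.2), and it is precisely what the paper invokes at this point. The reason it holds, despite your worry, is that the action of $\pi_1(\widetilde F^+)$ on $\pi_n(\widetilde F^+)$ is the restriction along $i_*$ of the holonomy action of $\pi_1(E^+)$, and the long exact sequence splits $\pi_n(\widetilde F^+)$ into the image of $\partial:\pi_{n+1}(B)\to\pi_n(\widetilde F^+)$ and a subgroup of $\pi_n(E^+)$: on the latter the action is nilpotent because $E^+$ is nilpotent, while on the former the $\pi_1(\widetilde F^+)$-action is \emph{trivial}, since it factors through $p_*\circ i_*$, which is null. (The same observation shows $\pi_1(\widetilde F^+)$ is a central extension of a subgroup of the nilpotent group $\pi_1(E^+)$, hence nilpotent.) No hypothesis on $B$ beyond connectivity is needed for this, so your detour through the nilpotency of $\pi_1(B)$ is unnecessary.

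By contrast, the substitute argument you sketch --- that the homology equivalence $\varphi^+$ ``pins down the $\pi_1$-actions on $\pi_n(\widetilde F^+)$ tightly enough to force nilpotency'' --- is not a proof: integral homology does not control the $\pi_1$-action on higher homotopy groups (that is the whole reason the Whitehead theorem fails outside the nilpotent setting), and you would in any case need nilpotency of $\widetilde F^+$ \emph{before} you are entitled to apply the nilpotent Whitehead theorem, so the reasoning is circular. Replace that paragraph with the citation of the fiber-nilpotency theorem above and your proof coincides with the paper's and is complete.
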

\begin{proof}
Let $F_f$ denote the homotopy fiber of $f$ and $g: F \to F_f$ be the induced homology equivalence. Note that $F_f$ is connected. By Proposition \ref{L1}, $F_f^+ \to E^+ \to B$ is a homotopy fiber sequence. Since $E^+$ is nilpotent by condition (b), the homotopy fiber $F_f^+$ is again nilpotent (see \cite{Dr}). Hence the map $g^+$ is a homology equivalence between nilpotent spaces, and therefore also a (weak) homotopy equivalence by the generalised Whitehead theorem \cite{Dr}. It follows that $F^+ \to E^+ \to B^+=B$ is a homotopy fiber sequence. 
\end{proof}

\section{The Cobordism Categories $\mathbb{A}_d$} \label{cob3}

\subsection{Definition of $\mathbb{A}_d$} \label{cob3.0} Let $d \geq 1$ be an integer. The category $\mathbb{A}_d$ is a subcategory of the cobordism category of orientable surfaces up to homeomorphism. The objects are the non-negative integers regarded as disjoint unions of labelled circles up to homeomorphism; a morphism from $m$ to $k$ is an  orientable $2$-dimensional cobordism up to homeomorphism with $m$ incoming and $k$ outgoing boundary circles and each of whose connected components is
\begin{itemize}
\item[(a)] either the cobordism of genus $0$ from $1$ to $1$, i.e., the cylinder $S^1 \times I$,
\item[(b)] or the surface of genus $0$ with $d$ boundary circles that are all incoming. This morphism will be denoted by $S(d): d \to 0$.
\end{itemize} 
The disjoint union of cobordisms defines a symmetric monoidal pairing on $\mathbb{A}_d$. In particular, it makes the set of connected components $\pi_0 B \mathbb{A}_d$ into an abelian monoid. $\mathbb{A}_d$ has $d$ connected components and the reduction map $\Ob \mathbb{A}_d \to \Z_d$ clearly induces an isomorphism 
\begin{displaymath}
\pi_0 B\mathbb{A}_d  \cong \Z_d.
\end{displaymath}
By well-known results in the theory of infinite loop spaces, it follows that $B\mathbb{A}_d$ is an infinite loop space (e.g. see \cite{Se3}, \cite{May2}). 

\textit{Notation.} We will write $\overline{\mathbb{A}}_d$ for the component of the category $\mathbb{A}_d$ that contains the object $0$, and $\mathbf{n}$ for the object $n \cdot d$ of $\overline{\mathbb{A}}_d$. \

\subsection{The homotopy fiber sequences} \label{cob3.1} The monoid $\End(\mathbf{k})$ is canonically isomorphic with the group of permutations of $k \cdot d$ elements. For every $\mathbf{k} \in \Ob\overline{\mathbb{A}}_d$, let $i_k: \End(\mathbf{k}) \rightarrow \End(\mathbf{k+1})$ be the homomorphism that corresponds to the canonical inclusion $\Sigma_{kd} \rightarrow \Sigma_{(k+1)d}$. Also, let $\alpha_k: \mathbf{k+1} \rightarrow \mathbf{k}$ denote the morphism $1_{\mathbf{k}} \sqcup S(d)$ where $S(d)$ is the unique morphism $\mathbf{1} \rightarrow \mathbf{0}$ and $\sqcup$ is the symmetric monoidal pairing given by the disjoint union of cobordisms. Let $\mathcal{I}$ denote the subcategory generated by the morphisms $\alpha_k$, $k \geq 0$. Note that $\mathcal{I}$ is precisely $\mathcal{I}(S(d))$ in the notation of Example \ref{example2.1}. For every $\sigma \in \End(\mathbf{k})$, we have $\sigma  \alpha_k = \alpha_k  i_k(\sigma)$ , so $\overline{\mathbb{A}}_d$ stabilises along the subcategory $\mathcal{I}$. 

The little comma category $\mathbf{n} \downarrow_{\overline{\mathbb{A}}_d} \mathbf{k}$ is a connected groupoid, and the automorphism group of each of its objects is isomorphic with the group $\Sigma_{n-k} \wr \Sigma_d$. Thus the inclusion  $\Sigma_{n-k} \wr \Sigma_d \rightarrow \mathbf{n} \downarrow_{\overline{\mathbb{A}}_d} \mathbf{k}$ of the automorphisms of the object $\mathbf{n} \rightarrow \mathbf{k}$ that is in $\mathcal{I}$ induces a homotopy equivalence between the classifying spaces. Moreover,  there is a commutative square
\begin{displaymath}
\xymatrix{
B(\mathbf{n} \downarrow_{\overline{\mathbb{A}}_d} \mathbf{k}) \ar[r]^-{B(\alpha_n^*)} & B(\mathbf{n+1} \downarrow_{\overline{\mathbb{A}}_d} \mathbf{k})  \\
B(\Sigma_{n-k} \wr \Sigma_d) \ar[u]^{\simeq} \ar[r] & B(\Sigma_{n-k+1} \wr \Sigma_d \ar[u]^{\simeq})
}
\end{displaymath}
where the top map is induced by $\alpha_n$ and $i_n$ while the bottom map is induced by the natural inclusion of 
groups. Thus the space hocolim$_{\mathcal{I}^{op}} B(- \downarrow_{\overline{\mathbb{A}}_d} \mathbf{k})$ can be identified up to homotopy with $B(\Sigma_{\infty} \wr \Sigma_d)$ for every $\mathbf{k} \in \Ob \overline{\mathbb{A}}_d$.

\begin{proposition} \label{stab}
$\overline{\mathbb{A}}_d$ is $H\Z$-stable along $\mathcal{I}$, i.e., for every morphism $u: \mathbf{k} \to \mathbf{k'}$ the canonical map 
$$\h B(-\downarrow_{\overline{\mathbb{A}}_d} \mathbf{k}) \to 
\h B(- \downarrow_{\overline{\mathbb{A}}_d} \mathbf{k'})$$
is a homology equivalence. 
\end{proposition}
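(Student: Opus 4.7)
The plan is to reduce to the generating morphisms of $\overline{\mathbb{A}}_d$ and, using the identification $\h B(- \downarrow_{\overline{\mathbb{A}}_d} \mathbf{k}) \simeq B(\Sigma_\infty \wr \Sigma_d)$ recorded in the excerpt, show that the induced self-map of $B(\Sigma_\infty \wr \Sigma_d)$ is homotopic to the identity in each case, which certainly implies the required homology equivalence. The decomposition I would use is that every morphism $u : \mathbf{k} \to \mathbf{k}'$ in $\overline{\mathbb{A}}_d$ can be written as $u = (\alpha_{k'} \alpha_{k'+1} \cdots \alpha_{k-1}) \circ \tau$ for some permutation $\tau \in \End(\mathbf{k}) = \Sigma_{kd}$: the iterated $\alpha$'s supply the $k - k'$ copies of $S(d)$ in canonical position, and the choice of $\tau$ records which incoming circles are capped, how they are grouped into $S(d)$-blocks, and how the remaining circles match the outgoing ones as cylinders. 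By functoriality of $B\mathcal{G}_{\mathcal{I}}$ it therefore suffices to treat the two cases $u = \alpha_j$ and $u \in \End(\mathbf{k})$ separately.

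For $u = \alpha_j : \mathbf{j+1} \to \mathbf{j}$, I would fix canonical objects $\iota_{n,k} := \alpha_k \alpha_{k+1} \cdots \alpha_{n-1} \in \Ob(\mathbf{n} \downarrow_{\overline{\mathbb{A}}_d} \mathbf{k})$ for every $n \geq k$, which satisfy $\alpha_j \circ \iota_{n,j+1} = \iota_{n,j}$ on the nose. Under the identification $B(\mathbf{n} \downarrow_{\overline{\mathbb{A}}_d} \mathbf{k}) \simeq B(\Sigma_{n-k} \wr \Sigma_d)$ from the excerpt, the functor $(\alpha_j)_*$ then acts on automorphism groups as the standard inclusion $\Sigma_{n-j-1} \wr \Sigma_d \hookrightarrow \Sigma_{n-j} \wr \Sigma_d$, which is precisely the structure map of the $\mathcal{I}^{op}$-indexed diagram at the appropriate level. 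After passage to $\h$, the induced map becomes the shift self-map of the mapping telescope $B(\Sigma_\infty \wr \Sigma_d)$, which is homotopic to the identity.

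For $u = \tau \in \End(\mathbf{k}) = \Sigma_{kd}$, I would use the element $g_n := \tilde\tau^{-1} \in \Sigma_{nd}$, where $\tilde\tau$ denotes the permutation that acts as $\tau$ on the first $kd$ letters and fixes the remaining $(n-k)d$ ones. A direct check yields $\tau \circ \iota_{n,k} \circ g_n = \iota_{n,k}$, so $g_n$ provides an isomorphism $\iota_{n,k} \to \tau \iota_{n,k}$ in the groupoid. Under the identification induced by this basepoint change, the functor $\tau_*$ acts on $\mathrm{Aut}(\iota_{n,k}) = \Sigma_{n-k} \wr \Sigma_d \subset \Sigma_{nd}$ by conjugation with $\tilde\tau$. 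But $\tilde\tau$ and $\Sigma_{n-k} \wr \Sigma_d$ act on disjoint blocks of letters, so they commute in $\Sigma_{nd}$ and the resulting map is literally the identity.

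The main subtlety I expect is ensuring that the basepoint choices $g_n$ are coherent as $n$ varies, so that the pointwise identities assemble into a single self-map of the hocolim rather than merely a pointwise identity up to an uncontrolled chain of homotopies. Taking $g_n = \tilde\tau^{-1}$ uniformly, with $\tilde\tau$ fixing every letter beyond position $kd$, makes the squares involving the stabilisation maps $\alpha_n^*$ commute strictly, so the pointwise identifications extend to an equivalence of $\mathcal{I}^{op}$-diagrams. Hence $B\mathcal{G}_{\mathcal{I}}(u)$ is a self-map of $B(\Sigma_\infty \wr \Sigma_d)$ that is homotopic to the identity, and in particular a homology equivalence, as required.
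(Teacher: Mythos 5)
Your reduction of a general $u:\mathbf{k}\to\mathbf{k'}$ to the generators $\alpha_j$ and to endomorphisms, and your identification of the comma categories via the canonical objects $\iota_{n,k}$, is essentially the paper's strategy. The endomorphism case is handled correctly (and is in fact easier than you make it: for invertible $\tau$ the functor $\tau_*$ commutes strictly with the stabilisation maps and has inverse $(\tau^{-1})_*$, so it is an automorphism of the $\mathcal{I}^{op}$-diagram and induces a homeomorphism of $\h(-)$; your coherent choice $g_n=\tilde\tau^{-1}$ is only needed for the stronger claim that this homeomorphism is homotopic to the identity).

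The case $u=\alpha_j$, however, contains a genuine error. The group $\Aut(\iota_{n,j})$ is the wreath product built on the blocks $\{jd+1,\dots,(j+1)d\},\dots,\{(n-1)d+1,\dots,nd\}$, and the stabilisation map $\mathbf{n}\downarrow_{\overline{\mathbb{A}}_d}\mathbf{j}\to\mathbf{n+1}\downarrow_{\overline{\mathbb{A}}_d}\mathbf{j}$ adjoins the new \emph{top} block $\{nd+1,\dots,(n+1)d\}$, whereas $(\alpha_j)_*:\Aut(\iota_{n,j+1})\hookrightarrow\Aut(\iota_{n,j})$ adjoins the \emph{bottom} block $\{jd+1,\dots,(j+1)d\}$. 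So $(\alpha_j)_*$ is not ``precisely the structure map of the $\mathcal{I}^{op}$-indexed diagram''; it is only conjugate to it, by a cyclic permutation of the blocks that depends on $n$ and cannot be chosen compatibly with the stabilisation maps --- exactly the coherence problem you flag in the other case, except that here no uniform choice exists. Consequently the induced self-map of the telescope is not the shift map, and ``homotopic to the identity'' does not follow from your argument. What does survive, and is all the proposition needs, is the homology statement: conjugation by a group element acts trivially on $H_*(BG)$, so the levelwise maps $H_*(B(\Sigma_{n-j-1}\wr\Sigma_d))\to H_*(B(\Sigma_{n-j}\wr\Sigma_d))$ coincide with those induced by the standard inclusions; since $H_*$ of a mapping telescope is the colimit of the $H_*$'s, the induced map on $H_*(\h B(-\downarrow_{\overline{\mathbb{A}}_d}\mathbf{j+1}))$ is the colimit of the standard stabilisations and hence an isomorphism. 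This conjugation-invariance step is precisely the point of the paper's proof, and your write-up needs it to close the gap.
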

\begin{proof}
It suffices to prove this for a morphism $u: \mathbf{k+1} \to \mathbf{k}$. There is a commutative square
\begin{displaymath}
\xymatrix{
B(\mathbf{n} \downarrow_{\overline{\mathbb{A}}_d} \mathbf{k+1}) \ar[d] \ar[r] & B(\mathbf{n+1} \downarrow_{\overline{\mathbb{A}}_d} \mathbf{k+1}) \ar[d]  \\
B(\mathbf{n} \downarrow_{\overline{\mathbb{A}}_d} \mathbf{k}) \ar[r] & B(\mathbf{n+1} \downarrow_{\overline{\mathbb{A}}_d} \mathbf{k})  
}
\end{displaymath} 
where the vertical maps are induced by $u$. This can be identified up to homotopy equivalence with a square  
\begin{displaymath}
\xymatrix{
B(\Sigma_{n-(k+1)} \wr \Sigma_d) \ar[d] \ar[r] & B(\Sigma_{n+1-(k+1)} \wr \Sigma_d) \ar[d]  \\
B(\Sigma_{n-k} \wr \Sigma_d) \ar[r] & B(\Sigma_{n+1-k} \wr \Sigma_d)
}
\end{displaymath}
where the horizontal maps are the canonical inclusions, but the vertical maps are conjugates of the canonical inclusions, i.e., they are given by the canonical inclusions of groups followed by an inner automorphism that re-labels the elements. Notice that an inner automorphism cannot be chosen uniformly for all $n$. But since conjugation by an element does not affect the map on homology, it follows that the induced map between the colimits is a homology equivalence, and therefore so is the map $\h B(-\downarrow_{\overline{\mathbb{A}}_d} \mathbf{k+1}) \to 
\h B(- \downarrow_{\overline{\mathbb{A}}_d} \mathbf{k})$.
\end{proof}

Let $A_n \unlhd \Sigma_n$ denote the alternating group. We will write $A_n \wr^e \Sigma_d$ for the subgroup of $\Sigma_n \wr \Sigma_d$ that consists of the elements $(\sigma; \beta_1, ..., \beta_n)$ such that $\sigma \in A_n$ and the number of odd permutations among the $\beta_k$'s is even. 

\begin{proposition} \label{perf}
$(i)$ Let $n \geq 5$ and $d \geq 2$. The maximal perfect subgroup of $ {\Sigma}_n \wr \Sigma_d$ is $A_n \wr^e \Sigma_d$. The quotient is isomorphic with $\Z_2 \times \Z_2$. \
$(ii)$ The maximal perfect subgroup of $ {\Sigma}_{\infty}\wr \Sigma_d$ is
$A_{\infty} \wr^e \Sigma_d$. The quotient is isomorphic with $\Z_2 \times \Z_2$.
\end{proposition}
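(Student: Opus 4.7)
The plan is to prove (i) by identifying the derived subgroup of $\Sigma_n \wr \Sigma_d$ with $A_n \wr^e \Sigma_d$ and then verifying that this derived subgroup is itself perfect; part (ii) will follow by passing to a filtered colimit.

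For the derived subgroup computation, I would exhibit the surjective homomorphism
\[
\phi \colon \Sigma_n \wr \Sigma_d \twoheadrightarrow \Z_2 \times \Z_2, \qquad (\sigma; \beta_1, \ldots, \beta_n) \mapsto \Bigl(\mathrm{sgn}(\sigma),\ \prod_{i=1}^n \mathrm{sgn}(\beta_i)\Bigr),
\]
which is well-defined, surjective for $n, d \geq 2$, and has kernel exactly $A_n \wr^e \Sigma_d$. The standard formula $(G \wr \Sigma_n)_{\mathrm{ab}} \cong G_{\mathrm{ab}} \oplus (\Sigma_n)_{\mathrm{ab}}$ --- which follows from the fact that the $\Sigma_n$-coinvariants of $(G_{\mathrm{ab}})^n$ for the permutation action collapse to a single copy of $G_{\mathrm{ab}}$ for $n \geq 2$ --- applied to $G = \Sigma_d$ shows that $\phi$ realizes the abelianization. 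Hence the commutator subgroup equals $A_n \wr^e \Sigma_d$, and the maximal perfect subgroup of $\Sigma_n \wr \Sigma_d$ is contained in it.

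The heart of the proof is showing that $H := A_n \wr^e \Sigma_d$ is itself perfect. I would decompose $H = K \rtimes A_n$ where $K = H \cap (\Sigma_d)^n$, and verify that both $A_n$ and $K$ lie in $[H, H]$. First, $A_n \subseteq [H, H]$ because $A_n$ is perfect for $n \geq 5$. Next, for $(A_d)^n \subseteq [H, H]$: when $d \geq 5$ this is immediate from perfectness of $A_d$, while for $d = 3, 4$ one uses the identity $[k_1, k_2] = (1;\,1, [t, c], 1, \ldots, 1)$ where $k_1 = (1;\,t, t, 1, \ldots, 1)$ with $t \in \Sigma_d$ odd and $k_2 = (1;\,1, c, 1, \ldots, 1)$ with $c \in A_d$ (both in $K$), exploiting that $[\Sigma_d, A_d] = A_d$ for $d \geq 3$; varying $c, t$, taking products, and conjugating by $A_n$ then produce all of $(A_d)^n$. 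Finally, the remaining generators of $K$ modulo $(A_d)^n$ are the ``doubled odd'' elements $(1;\,t, t, 1, \ldots, 1)$, and one produces these via $[\sigma, k] = (1;\,t, 1, t, 1, \ldots, 1)$ for $\sigma = (1\,2\,3) \in A_n$ and $k = (1;\,t, t, 1, \ldots, 1) \in K$, then conjugates by an element of $A_n$ to place the two $t$'s at any pair of coordinates (here $n \geq 5$ guarantees enough freedom for the coordinate rearrangement).

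For (ii), the sign homomorphisms $\phi$ are compatible with the stabilizations $\Sigma_n \wr \Sigma_d \hookrightarrow \Sigma_{n+1} \wr \Sigma_d$ that extend by fixing the new coordinate at $1 \in \Sigma_d$, so they assemble into a surjection $\Sigma_\infty \wr \Sigma_d \twoheadrightarrow \Z_2 \times \Z_2$ with kernel $A_\infty \wr^e \Sigma_d$. The latter is the directed union of the subgroups $A_n \wr^e \Sigma_d$ for $n \geq 5$, each of which is perfect by (i), and a directed union of perfect subgroups is clearly perfect. The main technical obstacle lies in the perfectness argument for small $d$, namely $d = 3, 4$, where $A_d$ is not perfect and one cannot realize elements of $(A_d)^n$ as commutators inside $(A_d)^n$ alone; one must instead mix the inner $\Sigma_d$-structure with the outer $A_n$-action, and the hypothesis $n \geq 5$ is essential in order to carry out the necessary coordinate rearrangements via $A_n$-conjugation.
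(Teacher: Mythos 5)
Your proof is correct and follows the same strategy as the paper: identify $A_n \wr^e \Sigma_d$ as the kernel of the sign homomorphism $\phi$ onto $\Z_2 \times \Z_2$ (hence as the derived subgroup, which contains the maximal perfect subgroup), verify perfectness of this kernel by exhibiting explicit commutators for $A_n$, for $(A_d)^n$, and for the doubled odd elements, and pass to the colimit for (ii). The one point where you are more thorough than the paper is the case $d \in \{3,4\}$: the paper deduces $A_n \wr A_d \subseteq [H,H]$ from perfectness of $A_n$ alone, which only accounts for the top factor, whereas your commutator $[k_1,k_2]$ producing $[t,c]$ in a single coordinate, combined with $[\Sigma_d, A_d] = A_d$, genuinely handles the bottom factor when $A_d$ fails to be perfect (your displayed element $(1; t,1,t,1,\dots,1)$ should strictly be $(1; t^{-1},1,t,1,\dots,1)$ for a non-involutive odd $t$, but this is harmless since you only use it modulo $(A_d)^n$).
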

\begin{proof} $(i)$ The maximal perfect subgroup $P$ of $\Sigma_n \wr \Sigma_d$ is a subgroup of $A_n \wr^e \Sigma_d$ because every commutator in $\Sigma_n \wr \Sigma_d$ lies in $A_n \wr ^e \Sigma_d$. Therefore it suffices to show that $A_n \wr^e \Sigma_d$ is perfect. It is well-known that $A_n$ is a perfect group for all $n \geq 5$. Hence the commutator subgroup of $A_n \wr^e \Sigma_d$ contains the subgroup $A_n \wr A_d$. Then it suffices to note that every element of the form
\begin{displaymath}
(\id;(ab), (ab), \id, \cdots, \id)
\end{displaymath}
can be written as a commutator of the elements 
\begin{align*}
((12)(34) ; \id, \cdots, \id), && (\id;(ab),\id, \cdots, (ab)).
\end{align*}
The second claim is now obvious: there is a surjective homomorphism $\phi: \Sigma_n \wr \Sigma_d \to \Z_2 \times \Z_2$ defined by $\phi( \sigma ; \beta _1,..., \beta_n) = ( sign( \sigma ), sign( \beta_1 \cdots  \beta_n) )$ whose kernel is $A_n \wr^e \Sigma_d$. $(ii)$ follows immediately from $(i)$.
\end{proof}

Let $QX_+:= \ho_n\Omega^n \Sigma^n X_+$ denote the free infinite loop space generated by the (unbased) space $X$ and $Q_n X_+$ be the connected component of the map of degree $n$ when $X$ is connected. From the theory of infinite loop spaces (see \cite{Ad},\cite{May}, \cite{Se4}), this infinite loop space is weakly homotopy equivalent with the group completion of a suitably defined topological monoid
\begin{displaymath}
C_{\infty}(X_+) \simeq \bigsqcup_n E\Sigma_n \times_{\Sigma_n} X^n
\end{displaymath}
of configurations of points in $\R^{\infty}$ labelled by points in $X$. In the case where $X=B\Sigma_d$, there is a canoncal  identification $B(\Sigma_n \wr \Sigma_d) \cong E\Sigma_n \times_{\Sigma_n} (B\Sigma_d)^n$, and so by the group completion theorem \cite{McDS}, there is a weak homotopy equivalence
\begin{displaymath}
\Z \times B(\Sigma_{\infty} \wr \Sigma_d)^+ \stackrel{\simeq}{\to} Q(B\Sigma_d){}_+.
\end{displaymath}
In particular, the spaces $B(\Sigma_{\infty} \wr \Sigma_d )^+$ are clearly nilpotent for all $d$. \

\begin{proposition} \label{main1}
There is a homotopy fiber sequence
$B(\Sigma_{\infty} \wr \Sigma_d){}^+ \stackrel{\overline{\tau}_d}{\to} B \Sigma_{\infty}^+ \to B \overline{\mathbb{A}}_d$ 
where the map $\overline{\tau}_d$ is induced by the natural homomorphisms $\Sigma_n \wr \Sigma_d \to \Sigma_{nd}$.
\end{proposition}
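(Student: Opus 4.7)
The plan is to apply Theorem \ref{ggct} to $\overline{\mathbb{A}}_d$ with respect to the subcategory $\mathcal{I}$, whose $H\Z$-stability was established in Proposition \ref{stab}. This yields a homology fibre sequence
\[
B\mathcal{G}_{\mathcal{I}}(\mathbf{0}) \to B\Bigl(\int_{\mathcal{I}^{op}} \End_{\overline{\mathbb{A}}_d}(-)\Bigr) \to B\overline{\mathbb{A}}_d,
\]
where the middle term is equivalent to $\h B\End_{\overline{\mathbb{A}}_d}(-)$. Since $\mathcal{I}^{op}$ is the telescope on the canonical inclusions $i_n \colon \Sigma_{nd} \hookrightarrow \Sigma_{(n+1)d}$ and $\End_{\overline{\mathbb{A}}_d}(\mathbf{n}) = \Sigma_{nd}$, the middle term is weakly equivalent to $B\Sigma_{\infty}$. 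Likewise, by the discussion preceding Proposition \ref{stab} (together with Corollary \ref{grothendieck-thomason2}), $B(\mathbf{n} \downarrow_{\overline{\mathbb{A}}_d} \mathbf{0}) \simeq B(\Sigma_n \wr \Sigma_d)$ with transition maps given (up to conjugation) by the standard inclusions, so $B\mathcal{G}_{\mathcal{I}}(\mathbf{0}) \simeq B(\Sigma_{\infty} \wr \Sigma_d)$.

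Next, I would identify the fibre map with $\bar\tau_d$. According to the last line in the proof of Theorem \ref{ggct}, this map factors at each level $n$ through the weak equivalence $U(\mathbf{n}) \colon \int_{\overline{\mathbb{A}}_d} \mathcal{F}_{\mathbf{n}} \to \End_{\overline{\mathbb{A}}_d}(\mathbf{n})$ of Lemma \ref{Y}. Restricted to the automorphisms of the object $\alpha_0 \alpha_1 \cdots \alpha_{n-1} \colon \mathbf{n} \to \mathbf{0}$ in $\mathbf{n} \downarrow_{\overline{\mathbb{A}}_d} \mathbf{0}$, this is precisely the block-permutation homomorphism $\Sigma_n \wr \Sigma_d \to \Sigma_{nd}$, i.e., $\bar\tau_d$ at finite level. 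Having identified the homology fibre sequence, I would then invoke Proposition \ref{+} to upgrade it to a homotopy fibre sequence after plus construction. The nilpotency hypothesis holds because $B\Sigma_{\infty}^+$ and $B(\Sigma_{\infty} \wr \Sigma_d)^+$ are (basepoint components of) the infinite loop spaces $QS^0$ and $Q(B\Sigma_d)_+$ respectively, by the Barratt--Priddy--Quillen theorem and its wreath-product extension recalled just above the statement, and are therefore simple. The hypothesis $P\pi_1(B\overline{\mathbb{A}}_d) = 1$ is automatic because $B\overline{\mathbb{A}}_d$ is the basepoint component of the infinite loop space $B\mathbb{A}_d$, so its fundamental group is abelian.

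The main technical obstacle is the identification of the map coming out of Theorem \ref{ggct} with the block-permutation homomorphism $\bar\tau_d$: this requires unravelling the chain of equivalences in Lemma \ref{Y} and its use in the proof of Theorem \ref{ggct} and checking that no implicit automorphism of $\Sigma_{\infty}$ intervenes, so that the finite-level factorisations assemble consistently as $n \to \infty$. Everything else is a direct combination of Proposition \ref{stab}, Theorem \ref{ggct}, Proposition \ref{+}, and the infinite loop structure on $B\mathbb{A}_d$.
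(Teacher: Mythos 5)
Your proposal is correct and follows essentially the same route as the paper: Theorem \ref{ggct} plus Proposition \ref{stab} give the homology fibre sequence with the terms and the map identified exactly as you describe, and Proposition \ref{+} upgrades it, with the nilpotency of the plus constructions and the triviality of $P\pi_1(B\overline{\mathbb{A}}_d)$ justified just as you do (the paper notes the former explicitly right before the statement and the latter follows from the infinite loop space structure). Your write-up is in fact somewhat more explicit than the paper's about why the forgetful functors realise the block-permutation homomorphisms and why the hypotheses of Proposition \ref{+} hold.
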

\begin{proof}
By Theorem \ref{ggct} and Proposition \ref{stab}, there is a homology fiber sequence
$$B({\Sigma}_{\infty}\wr \Sigma_d) \rightarrow B{\Sigma}_{\infty} \rightarrow B\overline{\mathbb{A}}_d.$$ 
The conditions of Proposition \ref{+} are satisfied and therefore
\begin{displaymath}
B(\Sigma_{\infty} \wr \Sigma_d) ^+ \to B\Sigma_{\infty} ^+ \to B\overline{\mathbb{A}}_d
\end{displaymath}
is a homotopy fiber sequence, as required. According to Theorem \ref{ggct}, the first map is induced by the collection of forgetful functors $\mathbf{n} \downarrow_{\overline{\mathbb{A}}_d} \mathbf{0} \to \End(\mathbf{n})$ that can be identified (up to an equivalence of categories) with the homomorphisms $\Sigma_n \wr \Sigma_d \to \Sigma_{nd}$.
\end{proof} 

By Proposition \ref{perf}, we can identify the homotopy fiber sequence at the level of the fundamental groups as follows:
\begin{itemize}
\item if $d$ is even, then
$\pi_1(B({\Sigma}_{\infty}\wr \Sigma_d)^+) \cong \Z_2 \times \Z_2  \rightarrow \pi_1({B{\Sigma}_{\infty}}^+) \cong \Z_2$ is the projection onto a factor because an element $(\sigma; \beta_1, \cdots, \beta_n)$ in $\Sigma_{\infty} \wr \Sigma_d$ gives rise to an odd permutation in $\Sigma_{\infty}$ if and only if $\{\beta_k\}_{1 \leq k \leq n}$ contains an odd number of odd permutations,
\item if $d$ is odd, it is given by the addition map in $\Z_2$. 
\end{itemize}
By the long exact sequence of homotopy groups, it follows that 
\begin{displaymath}
\pi_1(B\overline{\mathbb{A}}_d)=\{0\}
\end{displaymath}
for all $d \geq 1$.

Since the spaces $B(\Sigma_{\infty} \wr \Sigma_d)^+$ and $B\Sigma_{\infty} ^+$ can be identified up to weak homotopy equivalence with a connected component of the free infinite loop spaces $QB\Sigma_d{}_+$ and $QS^0$ respectively, so the map $$[d] \times \overline{\tau}_d: \Z \times B({\Sigma}_{\infty} \wr \Sigma_d)^+  \rightarrow \Z \times B{\Sigma}_{\infty}^+$$ can be identified with the infinite loop map $$\tau_d: Q(B\Sigma_d)_+ \to QS^0$$ that is induced by the canonical map $B\Sigma_d \to QS^0$. Thus we conclude with the following theorem which is essentially a reformulation of Proposition \ref{main1}.

\begin{theorem} \label{mainA}
There is a homotopy fiber sequence of infinite loop spaces 
\begin{displaymath}
Q(B\Sigma_d){}_+ \stackrel{\tau_d}{\to} QS^0 \to B \mathbb{A}_d.
\end{displaymath}
\end{theorem}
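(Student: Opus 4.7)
The plan is to assemble the theorem from Proposition \ref{main1} by combining it with the Barratt-Priddy-Quillen theorem, the McDuff-Segal group completion theorem, and the infinite loop space structure on $B\mathbb{A}_d$. First, I would invoke the standard identifications: Barratt-Priddy-Quillen gives $B\Sigma_\infty^+ \simeq Q_0 S^0$, while the group completion theorem applied to the topological monoid $\bigsqcup_n E\Sigma_n \times_{\Sigma_n} (B\Sigma_d)^n \simeq \bigsqcup_n B(\Sigma_n \wr \Sigma_d)$ yields $\Z \times B(\Sigma_\infty \wr \Sigma_d)^+ \simeq Q(B\Sigma_d)_+$. The discussion immediately preceding the theorem statement already shows that under these identifications $\overline{\tau}_d$ is the restriction of the infinite loop map $\tau_d \colon Q(B\Sigma_d)_+ \to QS^0$ to any single connected component, where $\tau_d$ is the infinite loop extension of the canonical map $B\Sigma_d \to Q_d S^0$.

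Second, I would use that $B\mathbb{A}_d$ is a grouplike infinite loop space with $\pi_0 \cong \Z_d$ (established in Section \ref{cob3.0}), so that its $d$ connected components are pairwise homotopy equivalent to $B\overline{\mathbb{A}}_d$. Consequently $B\mathbb{A}_d \simeq \Z_d \times B\overline{\mathbb{A}}_d$, $QS^0 \simeq \Z \times Q_0 S^0$, and $Q(B\Sigma_d)_+ \simeq \Z \times Q_0(B\Sigma_d)_+$ as spaces, with $\tau_d$ taking the form $[d] \times \overline{\tau}_d$ under these splittings. The induced map $QS^0 \to B\mathbb{A}_d$ is then multiplication by $d$ on $\Z$ (the quotient $\Z \to \Z_d$ on $\pi_0$) combined on each connected component with the map $Q_0 S^0 \to B\overline{\mathbb{A}}_d$ from Proposition \ref{main1}.

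Finally, to identify the homotopy fiber of the composite $QS^0 \to B\mathbb{A}_d$, I would restrict to the preimage of the basepoint component $B\overline{\mathbb{A}}_d$, namely $\bigsqcup_{n \in d\Z} Q_n S^0$. On each summand the map coincides with the one from Proposition \ref{main1}, so the total homotopy fiber is $\bigsqcup_{n \in d\Z} Q_0(B\Sigma_d)_+ \simeq \Z \times Q_0(B\Sigma_d)_+ \simeq Q(B\Sigma_d)_+$, which is exactly the claimed fiber. The only point requiring care is the identification of the map $QS^0 \to B\mathbb{A}_d$ with the canonical quotient on $\pi_0$, but this is forced by the computation of $\pi_0 B\mathbb{A}_d$ via the reduction $\Ob \mathbb{A}_d \to \Z_d$ in Section \ref{cob3.0}, and together with Proposition \ref{main1} this pins down the map up to equivalence, completing the proof.
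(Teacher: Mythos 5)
Your proposal is correct and follows essentially the same route as the paper: Theorem \ref{mainA} is deduced from Proposition \ref{main1} by identifying $\Z \times B(\Sigma_\infty\wr\Sigma_d)^+$ with $Q(B\Sigma_d)_+$ and $\Z \times B\Sigma_\infty^+$ with $QS^0$ via group completion, recognising the assembled map as the infinite loop map $\tau_d = [d]\times\overline{\tau}_d$, and checking that the components match up (the paper states this as ``essentially a reformulation'' of Proposition \ref{main1}). Your explicit bookkeeping of the homotopy fiber over the basepoint component, namely $\bigsqcup_{n\in d\Z} Q_0(B\Sigma_d)_+ \simeq Q(B\Sigma_d)_+$, just spells out what the paper leaves implicit.
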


\begin{remark} ($d=1$) Note that the category $\mathbb{A}_1$ is contractible because $0$ is a terminal object.
\end{remark}

\subsection{The stable transfer map} \label{cob3.2}

 Let $p:X \rightarrow B$ be a covering map of CW-complexes with finite fibers of size $n$. There is a construction of a stable map $tr^s: QB_+ \rightarrow QX_+$, called the stable transfer map \cite{Ad}, \cite{KP}, \cite{Ro}.

We recall the construction of the stable transfer map. Firstly, define the pretransfer map $ptr : B \rightarrow E\Sigma_n \times_{\Sigma_n} X^n$ as follows: let $\bar{X} \rightarrow B$ be the $\Sigma_n$-bundle associated with the covering map $p$, i.e., $\bar{X}=\{(x_1,...,x_n) \in X^n : \{x_1, \cdots, x_n\}=p^{-1}(p(x_1)) \}$, and let $f:B \rightarrow B\Sigma_n$ be the map that classifies this $\Sigma_n$-bundle. There are $\Sigma_n$-maps $\bar{X} \rightarrow X^n$ and $\bar{X} \rightarrow E\Sigma_n$. Quotienting out the $\Sigma_n$-action on both sides of the map $\bar{X} \rightarrow E\Sigma_n \times X^n$, we obtain the pretranfer map $ptr$. 

For any generalised cohomology theory $h^*$, there is a transfer homomorphism $p_!:h^*(X_+) \rightarrow h^*(B_+)$ in the ``reverse'' direction. It is defined as follows: let $\alpha \in h^m(X_+)$ be a cohomology class represented by a map $\bar{\alpha} : X \to K_m$, where $\{K_m\}$ is the $\Omega$-spectrum that represents $h^*$. Then $p_!(\alpha)$ is defined to be the cohomology class represented by
\begin{displaymath}
 B \stackrel{ptr}{\rightarrow} E\Sigma_n \times_{\Sigma_n} X^n \rightarrow E\Sigma_n \times_{\Sigma_n} K_m^n \rightarrow K_m
\end{displaymath}
where the last map is defined by the $n$-fold loop structure of $K_m \cong \Omega^n K_{m+n}$.

Let $h^*$ be the cohomology theory represented by the infinite loop space $QX_+$. The transfer map is defined to be 
\begin{displaymath}
tr:=p_!(i_X)
\end{displaymath}
where $i_X: X \rightarrow QX_+$ denotes the canonical inclusion.

The stable transfer map $tr^s: QB_+ \to QX_+$ is the unique map of infinite loop spaces that is induced by the transfer map. In detail, it is defined by the collection of maps
\begin{displaymath} 
E\Sigma_m \times_{\Sigma_m} B^m \stackrel{1 \times (ptr)^m}{\longrightarrow} E\Sigma_m \times_{\Sigma_m} (E\Sigma_n \times_{\Sigma_n} X^n)^m \rightarrow E\Sigma_{mn} \times_{\Sigma_{mn}} X^{mn}
\end{displaymath}
for all $m \geq 0$. The last map comes from the operadic action of $\{E\Sigma_n\}_n$ on $QX_+$ or, in other words, the infinite loop space structure of $QX_+$. In order to interpret these maps geometrically, it is useful to think of the space of ordered configurations of $n$ points in $\R^{\infty}$ as a concrete model for $E\Sigma_n$. 

\begin{proposition} \label{tr} The map $\tau_{d+1}: Q(B\Sigma_{d+1})_+ \to QS^0$ is the composition of the stable transfer map $Q(B\Sigma_{d+1}){}_+ \to Q(B\Sigma_{d}){}_+$ of the covering up to homotopy $B\Sigma_d \to B\Sigma_{d+1}$ followed by the projection map $Q(B\Sigma_d){}_+ \to QS^0$ that is induced by $B\Sigma_d \to pt$. In particular,  $\tau_2: Q(B\Sigma_2){}_+ \to QS^0$ is the stable transfer map of the universal double covering.
\end{proposition}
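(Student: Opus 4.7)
The plan is to exploit the universal property of $Q((-)_+)$ as the free infinite loop space functor. By the $(\Sigma^{\infty}_+, \Omega^{\infty})$-adjunction, infinite loop maps $Q(B\Sigma_{d+1})_+ \to QS^0$ correspond bijectively, via restriction along the unit $B\Sigma_{d+1} \hookrightarrow Q(B\Sigma_{d+1})_+$ (which lands in the degree-one filtration piece), to unbased maps $B\Sigma_{d+1} \to QS^0$. Denote the projection $B\Sigma_d \to pt$ by $p$ and the candidate composite $p_* \circ tr^s$ by $\phi$. Both $\tau_{d+1}$ and $\phi$ are infinite loop maps, so it suffices to show that their restrictions to this degree-one filtration piece $B\Sigma_{d+1}$ agree up to homotopy.

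For $\tau_{d+1}$, this restriction is, by the very definition of $\tau_{d+1}$ recalled just before the proposition, the canonical map $B\Sigma_{d+1} \to QS^0$. For $\phi$, I track through the explicit construction of $tr^s$ reviewed above: on the $n=1$ piece of $Q(B\Sigma_{d+1})_+$ the stable transfer reduces to the pretransfer
\[ ptr \colon B\Sigma_{d+1} \to E\Sigma_{d+1} \times_{\Sigma_{d+1}} (B\Sigma_d)^{d+1} \]
associated with the $(d+1)$-fold cover $B\Sigma_d \to B\Sigma_{d+1}$, while $p_*$ restricts on this piece to the natural collapse
\[ E\Sigma_{d+1} \times_{\Sigma_{d+1}} (B\Sigma_d)^{d+1} \to E\Sigma_{d+1} \times_{\Sigma_{d+1}} pt^{d+1} = B\Sigma_{d+1}. \]
By the definition of $ptr$, the resulting composite $B\Sigma_{d+1} \to B\Sigma_{d+1}$ is precisely the classifying map of the $\Sigma_{d+1}$-bundle $\overline{B\Sigma_d} \to B\Sigma_{d+1}$ of orderings of the fibers of the cover. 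But the cover $B\Sigma_d \to B\Sigma_{d+1}$ is a model for $E\Sigma_{d+1}/\Sigma_d \to E\Sigma_{d+1}/\Sigma_{d+1}$, so its associated principal $\Sigma_{d+1}$-bundle is a model for the universal bundle $E\Sigma_{d+1} \to B\Sigma_{d+1}$; hence its classifying map is homotopic to the identity. Therefore the restriction of $\phi$ to $B\Sigma_{d+1}$ is again the canonical map $B\Sigma_{d+1} \to QS^0$, matching that of $\tau_{d+1}$, and the two infinite loop maps are homotopic.

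For the specialisation to $d=1$, note that $B\Sigma_1 = pt$, so $Q(B\Sigma_1)_+ = QS^0$ and $p_*$ becomes the identity; moreover the cover $B\Sigma_1 \to B\Sigma_2$ is a model for the universal double covering $S^{\infty} \to \R P^{\infty}$. Hence $\tau_2$ is simply the stable transfer of the universal double cover. The only delicate point of the argument is the identification of the composite ``pretransfer then collapse'' with the classifying map of the associated principal bundle; although this is a formal consequence of the construction of $ptr$ recalled in \S\ref{cob3.2}, spelling it out carefully by diagram chasing through the Borel construction will be the main technical step.
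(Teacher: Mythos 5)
Your argument is correct and is essentially the paper's own proof: since both maps are infinite loop maps out of the free infinite loop space $Q(B\Sigma_{d+1})_+$, one compares them on the generating piece $B\Sigma_{d+1}$, where the composite ``pretransfer followed by collapse'' $B\Sigma_{d+1} \to E\Sigma_{d+1}\times_{\Sigma_{d+1}}(B\Sigma_d)^{d+1} \to B\Sigma_{d+1} \to QS^0$ is identified with the canonical map inducing $\tau_{d+1}$. The only difference is that the paper asserts this last identification in one line, whereas you justify it explicitly by recognising the composite as the classifying map of the principal $\Sigma_{d+1}$-bundle associated with $E\Sigma_{d+1}/\Sigma_d \to E\Sigma_{d+1}/\Sigma_{d+1}$, which is the identity up to homotopy.
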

\begin{proof}
The composition of the two stable maps is induced by the composition of the pretransfer map $B\Sigma_{d+1} \to E\Sigma_{d+1} \times_{\Sigma_{d+1}} (B\Sigma_d)^{d+1}$ followed by the projection $E\Sigma_{d+1} \times_{\Sigma_{d+1}} (B\Sigma_d)^{d+1} \to E\Sigma_{d+1} \times_{\Sigma_{d+1}} * = B\Sigma_{d+1} \to QS^0$. This is also the map that induces $\tau_{d+1}$, hence result.
\end{proof}

A geometric interpretation of the map $\tau_d: Q(B\Sigma_d)_+ \rightarrow QS^0$ can be given in terms of framed bordism. The framed bordism groups $\Omega^{fr}_*(X)$ of a (unbased) space $X$ define a (unreduced) generalised homology theory that is represented by the sphere spectrum, i.e. $\Omega^{fr}_n(X) \cong \pi_n(QX_+)$. The elements of $\Omega^{fr}_n(X)$ are represented by triples $(M, \phi;f)$ where $M$ is a closed smooth $n$-dimensional manifold, $\phi$ is a trivialisation of its stable normal bundle (i.e., a lifting of the stable normal bundle $\bar{\nu} : M \rightarrow BO(N)$ to $EO(N)$) and a map $f: M \rightarrow X$, up to framed bordisms in $X$. The homomorphism $\pi_n(\tau_d) \colon \pi_n Q(B \Sigma_d)_+ \rightarrow \pi_nQS^0$ takes a class $(M,\phi;f)$ to the class $(M',\phi')$ where $M' \rightarrow M$ is the $d$-fold covering pulled back from $B \Sigma_{d-1} \to B \Sigma_d$ along the map $f$, and $\phi'$ is the trivialisation of the stable normal bundle of $M'$ that is induced by $\phi$. 

\begin{corollary} \label{htpygps}
There are short exact sequences of abelian groups
\begin{displaymath}
0 \rightarrow \pi_{n+1} B\mathbb{A}_d \rightarrow \Omega^{fr}_n(B\Sigma_d) \stackrel{\pi_n(\tau_d)}{\longrightarrow} \Omega^{fr}_n(pt) \rightarrow 0
\end{displaymath}
for every $n \geq 1$.
\end{corollary}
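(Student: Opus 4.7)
Theorem \ref{mainA} provides a homotopy fiber sequence of infinite loop spaces $Q(B\Sigma_d)_+ \xrightarrow{\tau_d} QS^0 \to B\mathbb{A}_d$. As the zeroth level of a cofiber sequence of connective spectra, it yields a long exact sequence of homotopy groups that extends indefinitely in both directions. Using the Pontryagin--Thom identification $\pi_n(QX_+) \cong \Omega^{fr}_n(X)$, this LES takes the form
\[
\cdots \to \pi^s_{n+1} \to \pi_{n+1}(B\mathbb{A}_d) \to \Omega^{fr}_n(B\Sigma_d) \xrightarrow{\pi_n(\tau_d)} \Omega^{fr}_n(pt) \to \pi_n(B\mathbb{A}_d) \to \cdots,
\]
and the desired short exact sequences will be extracted from this.

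The SES at level $n \geq 1$ requires both injectivity of the connecting map into $\Omega^{fr}_n(B\Sigma_d)$ and surjectivity of $\pi_n(\tau_d)$; by standard manipulation of the LES these two conditions together amount to the single statement that $\pi_m(QS^0) \to \pi_m(B\mathbb{A}_d)$ vanishes for every $m \geq 1$, or equivalently that $\pi_m(\tau_d)$ is surjective for every $m \geq 1$. The base case $m = 1$ follows from $\pi_1(B\overline{\mathbb{A}}_d) = 0$, which was established just after Proposition \ref{main1}. For $m \geq 2$, I would use the stable splitting $\Sigma^\infty (B\Sigma_d)_+ \simeq S \vee \Sigma^\infty B\Sigma_d$ (coming from the retraction $B\Sigma_{d+} \to S^0$ collapsing $B\Sigma_d$ to the non-basepoint), under which the spectrum map realising $\tau_d$ decomposes as $(d \cdot \mathrm{id}_S, \beta_d)$ with $\beta_d : \Sigma^\infty B\Sigma_d \to S$ the reduced Barratt--Priddy map; surjectivity of $\pi_m(\tau_d)$ is then the identity $d \cdot \pi^s_m + \mathrm{im}(\pi_m \beta_d) = \pi^s_m$.

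The main obstacle is verifying this identity at every prime. At primes $p$ coprime to $d$, multiplication by $d$ is a $p$-local equivalence on $\pi^s_m$ and the identity holds trivially. At primes $p$ dividing $d$, the argument uses a Kahn--Priddy-type surjectivity of the stable transfer: for $d = 2$ this is the classical Kahn--Priddy theorem applied to the universal double cover identified in Proposition \ref{tr}; for $d \geq 3$, one combines Proposition \ref{tr} with induction on $d$, using that the projection $Q(B\Sigma_{d-1})_+ \to QS^0$ appearing in the factorisation is split surjective on positive $\pi_*$ via the inclusion of the basepoint, so that it suffices to show the transfer $Q(B\Sigma_d)_+ \to Q(B\Sigma_{d-1})_+$ hits the relevant classes modulo $d$ at each prime $p \leq d$.
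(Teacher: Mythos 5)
Your reduction of the corollary to the surjectivity of $\pi_m(\tau_d)$ for all $m\geq 1$ — via the long exact sequence of the fiber sequence of Theorem \ref{mainA} and the identification $\pi_n(QX_+)\cong\Omega^{fr}_n(X)$ — is exactly the paper's route, and your treatment of $d=2$ (classical Kahn--Priddy on the $2$-primary part, multiplication by $2$ an isomorphism on the odd-primary part of the finite groups $\pi^s_m$) is correct. The gap is the case $d\geq 3$ at primes $p$ dividing $d$. The factorisation of Proposition \ref{tr} writes $\tau_d=\pi\circ tr$, where $\pi:Q(B\Sigma_{d-1})_+\to QS^0$ is the \emph{augmentation} induced by $B\Sigma_{d-1}\to pt$, not the map $\tau_{d-1}$ (which is induced by $B\Sigma_{d-1}\to Q_{d-1}S^0$); so an induction on $d$ has no inductive hypothesis to apply to the second factor. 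Moreover, split surjectivity of $\pi$ says nothing about $\mathrm{im}(\pi\circ tr)$: what you need is that $\mathrm{im}(tr)$ surjects onto the sphere summand of $\pi_m Q(B\Sigma_{d-1})_+$, and your condition that ``the transfer hits the relevant classes'' is, once unwound, precisely the surjectivity of $\pi_m(\tau_d)$ you are trying to prove. As written the induction does not close.

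The missing ingredient — and the reason the paper can say ``it suffices to know this for $d=2$'' — is that the inclusion $\Sigma_2\hookrightarrow\Sigma_d$ induces a map $j:Q(B\Sigma_2)_+\to Q(B\Sigma_d)_+$ with $\overline{\tau}_d\circ j=\overline{\tau}_2$ (this is the compatibility recorded in the remark on $d\to\infty$ following the corollary). Hence $\mathrm{im}\,\pi_m(\tau_d)\supseteq\mathrm{im}\,\pi_m(\tau_2)=\pi^s_m$ for all $m\geq 1$, and the $d=2$ case you have already established gives surjectivity for every $d\geq 2$ at once; the case $d=1$ is trivial since $\tau_1=\mathrm{id}$. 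In particular no odd-primary Kahn--Priddy theorem, no wedge decomposition of $\Sigma^{\infty}(B\Sigma_d)_+$ for $d\geq 3$, and no prime-by-prime analysis beyond the $d=2$ case are needed.
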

\begin{proof}
By the Kahn-Priddy theorem (see \cite{KP}, \cite{Se5}), the homomorphism $\Omega^{fr}_n(B \Sigma_d) \rightarrow \Omega^{fr}_n(pt)$ is surjective for all $n \geq 1$. (Note that it suffices to know this for $d=2$. For $n=0$, the map is given by multiplication by $d$.). Then the result follows from the long exact sequence of homotopy groups associated with the homotopy fiber sequence of Theorem \ref{mainA}.
\end{proof}

\begin{corollary}$\pi_2(B\mathbb{A}_d)\cong \Z_2$ for every $d \geq 2$.
\end{corollary}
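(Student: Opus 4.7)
The plan is to apply Corollary \ref{htpygps} at $n=1$, reducing the claim to a short exact sequence computation. Corollary \ref{htpygps} gives
\begin{displaymath}
0 \to \pi_2(B\mathbb{A}_d) \to \Omega^{fr}_1(B\Sigma_d) \stackrel{\pi_1(\tau_d)}{\longrightarrow} \Omega^{fr}_1(pt) \to 0,
\end{displaymath}
so it suffices to identify the two outer groups. The right-hand group is $\Omega^{fr}_1(pt) = \pi_1^s \cong \Z_2$, the first stable stem, generated by the Hopf map $\eta$.

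Next I would compute $\Omega^{fr}_1(B\Sigma_d) = \pi_1(Q(B\Sigma_d)_+)$. The standard stable splitting $\Sigma^{\infty}(B\Sigma_d)_+ \simeq \Sigma^{\infty} B\Sigma_d \vee \mathbb{S}$, arising from any choice of basepoint of $B\Sigma_d$, yields
\begin{displaymath}
\pi_1(Q(B\Sigma_d)_+) \cong \widetilde{\pi}_1^s(B\Sigma_d) \oplus \pi_1^s.
\end{displaymath}
To evaluate the first summand, I would run the reduced Atiyah--Hirzebruch spectral sequence $\widetilde{E}^2_{p,q} = \widetilde{H}_p(B\Sigma_d;\pi_q^s) \Rightarrow \widetilde{\pi}_{p+q}^s(B\Sigma_d)$: in total degree one the only nonzero term is $\widetilde{H}_1(B\Sigma_d;\Z) = \Sigma_d^{ab} \cong \Z_2$ (via the sign homomorphism, valid for all $d\geq 2$), since $\widetilde{H}_0(B\Sigma_d;\pi_1^s)=0$. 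Hence $\widetilde{\pi}_1^s(B\Sigma_d) \cong \Z_2$, and therefore $\Omega^{fr}_1(B\Sigma_d) \cong \Z_2 \oplus \Z_2$.

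Substituting into the short exact sequence yields $0 \to \pi_2(B\mathbb{A}_d) \to \Z_2 \oplus \Z_2 \to \Z_2 \to 0$, which forces $\pi_2(B\mathbb{A}_d)$ to be cyclic of order two, giving the required identification. There is no real obstacle here: the surjectivity of $\pi_1(\tau_d)$ is the only non-formal input, and it is already packaged into Corollary \ref{htpygps} via the Kahn--Priddy theorem (directly for $d=2$, and for general $d$ through the factorisation in Proposition \ref{tr}).
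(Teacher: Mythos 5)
Your proof is correct and follows essentially the same route as the paper: both apply Corollary \ref{htpygps} at $n=1$ and identify the resulting short exact sequence. The only difference is in how the middle term is computed: the paper reads off $\Omega^{fr}_1(B\Sigma_d)\cong \Z_2\times\Z_2$ and the explicit form of $\pi_1(\tau_d)$ (projection or addition, according to the parity of $d$) from Proposition \ref{perf}, whereas you obtain the same group via the stable splitting and the Atiyah--Hirzebruch spectral sequence and then avoid needing the explicit description of the map altogether, since any surjection from a group of order four onto $\Z_2$ has kernel $\Z_2$. This is a marginally more self-contained calculation, but not a different argument in substance.
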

\begin{proof}
This follows easily from the description of $\pi_1(\tau_d)$  and Corollary \ref{htpygps}. 
\end{proof}

\begin{remark} ($d \to \infty$) The canonical inclusion $B\Sigma_d \to B\Sigma_{d+1}$ induces an infinite loop map $j_d: Q(B\Sigma_d)_+ \to Q(B\Sigma_{d+1})_+$, and it is immediate that the maps $\overline{\tau}_d$ and $\overline{\tau}_{d+1} j_d$ agree. Moreover, there is a symmetric monoidal functor $\overline{\mathbb{A}}_d \to \overline{\mathbb{A}}_{d+1}$ that is defined on objects by $\mathbf{n} \mapsto \mathbf{n}$. Together they give a map of homotopy fiber sequences
\begin{displaymath}
 \xymatrix{
Q_0 (B \Sigma_d ){}_+ \ar[r]^{\overline{\tau}_d} \ar[d] & Q_0 S^0 \ar[r] \ar[d] & B \overline{\mathbb{A}}_d \ar[d] \\ 
Q_0 B(\Sigma_{d+1}){}_+ \ar[r]^{\overline{\tau}_{d+1}} & Q_0 S^0 \ar[r]  & B \overline{\mathbb{A}}_{d+1}.
}
\end{displaymath}
By letting $d \to \infty$, we obtain a homotopy fiber sequence $Q_0 Q_0 S^0 \to Q_0 S^0 \to B \overline{\mathbb{A}}_{\infty}$ where the first map is the canonical map induced by the identity 
$1: Q_0 S^0 \to Q_0 S^0$.
\end{remark}

\begin{remark} (representations of $\mathbb{A}_d$) \label{cob3.3} Let $(\mathcal{E}, \otimes, 1)$ be a strict symmetric monoidal category. A symmetric monoidal functor $F: \mathbb{A}_d \to \mathcal{E}$ is uniquely determined by a choice of an object $E \in \Ob\mathcal{E}$ and a morphism $\lambda: E^{\otimes d} \to 1$ which is invariant under the action of the symmetric group on $E^{\otimes d}$. Furthermore, the category of symmetric monoidal representations of $\mathbb{A}_d$ in $\mathcal{E}$ is equivalent to a category of such pairs.
\end{remark}

\section{The Cobordism Categories $\mathcal{C}_{1, \partial}$ and $\mathcal{C}^+_{1, \partial}$} \label{cob4}

\subsection{The cobordism categories $\mathcal{C}_d$ and $\mathcal{C}_{d, \partial}$.} \label{cob4.1}

Let $\mathcal{C}_d$ denote the embedded $d$-dimensional cobordism category of \cite{GMTW}. Broadly speaking, the space of objects is the space of closed smooth $(d-1)$-dimensional submanifolds of high Euclidean space and the space of morphisms is the space of smooth $d$-dimensional embedded cobordisms with collared boundary. For the precise definition of this category and its variations, see \cite{GMTW}. 

 The disjoint union of cobordisms makes $\mathcal{C}_d$ into a symmetric monoidal category. Note that $\pi_0 B \mathcal{C}_d$ is exactly the unoriented bordism group of $(d-1)$-dimensional manifolds. By well-known results in infinite loop space theory (see \cite{Se3}, \cite{May2}), it follows that the classifying space $B\mathcal{C}_d$ is an infinite loop space. The main result of \cite{GMTW} identifies this infinite loop space with the infinite loop space associated to a Thom spectrum. 

Let $Gr_{d}(\R^{d+n})$ denote the Grassmannian manifold of $d$-dimensional linear subspaces in $\R^{d+n}$. Let $\gamma_{d,n}: U_{d,n} \to Gr_d(\R^{d+n})$ be the $d$-dimensional tautological bundle and $\gamma_{d,n}^{\perp}: U_{d,n}^{\perp} \to Gr_d(\R^{d+n})$ be its $n$-dimensional orthogonal complement, i.e., 
\begin{displaymath}
U_{d,n}^{\perp} = \{(V,u) | V \in Gr_{d}(\R^{d+n}), u \in V^{\perp}\}.
\end{displaymath}
Let $MT(d)$ be the Thom spectrum associated with the collection of bundles $\{\gamma_{d,n}^{\perp}\}_n$, i.e., the stable vector bundle that is inverse to the universal $d$-dimensional bundle $\gamma_{d, \infty}$. In detail, the pullback of the bundle $\gamma_{d,n+1}^{\perp}$ along the inclusion $Gr_{d}(\R^{d+n}) \to Gr_{d}(\R^{d + n +1})$ is isomorphic to  $\gamma_{d, n}^{\perp} \oplus \epsilon^1$, so there are maps of Thom spaces 
\begin{displaymath}
S^1 \wedge Th(\gamma_{d,n}^{\perp}) \cong Th(\gamma_{d,n} \oplus \epsilon^1) \to Th(\gamma_{d,n+1}^{\perp}) 
\end{displaymath}
which define the structure maps of a spectrum. Following standard conventions (see \cite[IV.5]{Ru}), the $(d+n)$-th space of $MT(d)$ is the Thom space $Th(\gamma_{d,n}^{\perp})$. 

The isomorphism of bundles $\gamma_{d,n}^{\perp} \stackrel{\cong}{\to} \gamma_{n,d}$ covering the homeorphism $Gr_d(\R^{d+n}) \stackrel{\cong}{\to} Gr_n(\R^{n+d})$, $U \mapsto U^{\perp}$, gives an identification $Th(\gamma_{d,n}^{\perp}) \cong Th(\gamma_{n,d})$.  The inclusions $\gamma_{d,n}^{\perp} \to \gamma_{d+1, n}^{\perp}$ give inclusion maps of spectra $MT(d) \rightarrow \Sigma MT(d+1)$ which define a filtration of the universal Thom spectrum,
\begin{equation*}
MT(0) \rightarrow \Sigma MT(1) \rightarrow \Sigma^2 MT(2) \rightarrow ... \rightarrow MO.
\end{equation*}
In \cite[Proposition 3.1]{GMTW}, it is shown that there is a cofiber sequence of spectra, 
\begin{equation} \label{MTO-cof}
MT(d) \to \Sigma^{\infty} BO(d)_+ \to MT(d-1)
\end{equation}
that induces a homotopy fiber sequence of the associated infinite loop spaces,
\begin{equation} 
 \Omega^{\infty} MT(d) \to QBO(d)_+ \to \Omega^{\infty} MT(d-1).
\end{equation}
For $d=1$, this is the homotopy fiber sequence
\begin{equation}  \label{g-spectra}
\Omega^{\infty} MT(1) \to QBO(1)_+ \simeq Q\R P^{\infty}_+ \to \Omega^{\infty} MT(0) \simeq QS^0
\end{equation}
where the last map is the stable transfer of the universal double covering (e.g. see \cite[Remark 3.2]{GMTW}, \cite[Lemma 2.1]{Gal}).

\begin{theorem}[Galatius-Madsen-Tillmann-Weiss \cite{GMTW}]
There is a weak homotopy equivalence $\alpha: B\mathcal{C}_d \stackrel{\simeq }{\to} \Omega^{\infty-1} MT(d)$ for all $d \geq 0$.
\end{theorem}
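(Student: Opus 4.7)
The plan is to follow the Pontryagin--Thom/scanning strategy that identifies cobordisms with sections of a bundle of Thom spaces. I would not try to exhibit $\alpha$ directly as a functor from the category $\mathcal{C}_d$ to a topological space, but rather pass through a sheaf-theoretic model in which both sides become manifest.

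First I would introduce, for each $n$, the space $\psi_d(\R^{d+n})$ of closed smooth $d$-dimensional submanifolds of $\R^{d+n}$ without boundary, topologized so that convergence means $C^{\infty}$-convergence on compact subsets. The first key lemma is a parametrised Pontryagin--Thom construction giving a weak equivalence
\begin{displaymath}
\psi_d(\R^{d+n}) \xrightarrow{\simeq} \Omega^{d+n} Th(\gamma_{d,n}^{\perp}),
\end{displaymath}
obtained by sending $M \subset \R^{d+n}$ to the collapse map $S^{d+n} \to Th(\gamma_{d,n}^{\perp})$ that takes a point $x$ near $M$ to $(T_{\pi(x)} M, x - \pi(x))$ and everything else to the basepoint. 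Letting $n \to \infty$ and keeping track of the suspension shifts identifies $\mathrm{colim}_n \, \Omega \psi_d(\R^{d+n})$ with $\Omega^{\infty-1} MT(d)$.

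Next I would build a geometric model $\mathcal{D}_d$ of $\mathcal{C}_d$ in which morphisms are presented via their height functions. Objects are pairs $(a, M)$ with $a \in \R$ and $M \subset \{a\} \times \R^{d+n-1}$ a closed $(d-1)$-submanifold, and $p$-simplices of the nerve are $(p{+}1)$-tuples $a_0 \leq \cdots \leq a_p$ together with a closed $d$-submanifold $W \subset \R \times \R^{d+n-1}$ having each $a_i$ as a regular value of the projection to $\R$. A cofinality/collaring argument shows $B\mathcal{D}_d \simeq B\mathcal{C}_d$. The scanning map is then the natural map
\begin{displaymath}
B\mathcal{D}_d \longrightarrow \Omega \, \mathrm{colim}_n \psi_d(\R^{d+n}),
\end{displaymath}
sending a simplex to the loop in $\psi_d$ obtained by sweeping the projection $W \to \R$ past its regular values.

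The main obstacle will be showing that this scanning map is a weak equivalence. I would attack this by exhibiting $\psi_d$ as a \emph{flexible} sheaf on $\R$, meaning that restriction along an open cover induces a weak equivalence onto the corresponding homotopy limit, and then combining this with a version of Phillips' submersion theorem (or Gromov's microflexibility) applied to the first-coordinate projection $W \to \R$ to reduce families of submanifolds to families of height functions with prescribed regular values. The loop-space structure on $\Omega \psi_d(\infty)$ corresponds, under this dictionary, to concatenation of cobordisms in $\mathcal{D}_d$, and a group-completion-type argument is required to match the sets of connected components on both sides (the cobordism relation on one side, the homotopy relation on the other). Once the scanning map is established as a weak equivalence, composing with the Pontryagin--Thom equivalence from the first step yields the desired $\alpha : B\mathcal{C}_d \xrightarrow{\simeq} \Omega^{\infty-1} MT(d)$.
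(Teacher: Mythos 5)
This theorem is imported from \cite{GMTW}; the paper under review gives no proof of it, so your sketch has to stand on its own against the known argument. Your overall strategy (spaces of submanifolds, a parametrised Pontryagin--Thom construction, a regular-value model of the category, and an h-principle/flexibility argument for the scanning map) is indeed the strategy of \cite{GMTW} and of its later streamlining by Galatius--Randal-Williams. But your first key lemma is false as stated, and this propagates into an indexing error that means your construction does not land in $\Omega^{\infty-1}MT(d)$. If $\psi_d(\R^{d+n})$ is the space of \emph{compact} $d$-submanifolds, it is homotopy equivalent to $\bigsqcup_M B\Diff(M)$, and the collapse map you describe is the Madsen--Tillmann map $\bigsqcup_M B\Diff(M) \to \Omega^{d+n}Th(\gamma_{d,n}^{\perp})$, which is very far from a weak equivalence --- the entire content of the theorem is that it becomes one only after assembling the pieces into the cobordism category and delooping once. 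If instead ``closed'' means closed as a subset of $\R^{d+n}$ (which is what your compact-open $C^{\infty}$ topology suggests, and what the correct argument requires), then the right statement is $\psi_d(\R^{d+n}) \simeq Th(\gamma_{d,n}^{\perp})$ --- the Thom space itself, not its $(d+n)$-fold loop space --- with the empty manifold as basepoint and ``pushing to infinity'' supplying the required contractions; your collapse-map formula does not even define a based map out of $S^{d+n}$ for a non-compact $M$.

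Concretely, the loop count at the end fails: granting your two claimed equivalences, $\Omega\,\mathrm{colim}_n\,\psi_d(\R^{d+n})$ would be $\Omega^{\infty+1}MT(d)$, two loops away from the asserted target $\Omega^{\infty-1}MT(d) = \mathrm{colim}_n\,\Omega^{d+n-1}Th(\gamma_{d,n}^{\perp})$. The correct bookkeeping is: $B\mathcal{C}_d$ deloops the morphism spaces exactly once (in the cobordism/time direction), and one must then scan in the remaining $d+n-1$ coordinate directions to reach $\Omega^{d+n-1}Th(\gamma_{d,n}^{\perp})$; establishing that each of these scanning maps is a weak equivalence is precisely where the flexibility/microflexibility input is consumed, so this is not a cosmetic repair but the technical heart of the proof. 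One smaller point: no group-completion argument is needed here --- $\pi_0$ of both sides is the Thom bordism group $\pi_{d-1}MT(d)$ and the identification is direct; group completion only enters later, when one compares $\mathcal{C}_d$ with its positive-boundary subcategory and with stable diffeomorphism groups.
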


The positive boundary cobordism category $\mathcal{C}_{d, \partial}$ is the subcategory of $\mathcal{C}_d$ that contains only the cobordisms $W$ such that the inclusion of the incoming boundary is surjective on path components. In other words, every connected component of a cobordism $W$ in $\mathcal{C}_{d, \partial}$ has a non-empty incoming boundary. Note that here we use the opposite of the convention used in \cite{GMTW} for this technical condition (\textit{negative boundary}?). The introduction of this subcategory is motivated by the $2$-dimensional case and the connection between the embedded cobordism category of $orientable$ $2$-cobordisms $\mathcal{C}_{2,\partial}^+$ and the stable mapping class group \cite{GMTW}, \cite{Ti}.

\begin{theorem}[Galatius-Madsen-Tillmann-Weiss \cite{GMTW}] \label{general positive boundary}
The inclusion map $$B\mathcal{C}_{d,\partial} \rightarrow B\mathcal{C}_d$$ is a weak homotopy equivalence for all $d \geq 2$.
\end{theorem}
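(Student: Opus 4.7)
The plan is to establish the weak equivalence $B\mathcal{C}_{d,\partial} \hookrightarrow B\mathcal{C}_d$ by a parametrized surgery argument, in the spirit of the sheaf-theoretic methods of \cite{GMTW}. The key geometric input is that for $d \geq 2$, any closed component of an embedded cobordism in high-dimensional Euclidean space can be connected to the incoming boundary by an embedded $1$-handle, because the codimension leaves room to choose the handle generically and disjointly from the rest of the cobordism.

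First, I would replace both cobordism categories by their sheaf-theoretic models from \cite{GMTW}: $B\mathcal{C}_d$ is the realization of a sheaf $\Psi_d$ on smooth manifolds whose sections over a test manifold $X$ are families of $d$-dimensional embedded cobordisms over $X$ equipped with a compatible height function, and $B\mathcal{C}_{d,\partial}$ is the realization of the subsheaf $\Psi_{d,\partial} \subset \Psi_d$ cut out by the positive boundary condition. The inclusion of categories realizes the inclusion of sheaves, so it suffices to prove that $|\Psi_{d,\partial}| \to |\Psi_d|$ is a weak homotopy equivalence.

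Second, I would construct a parametrized surgery operation. Over each section of $\Psi_d$, the space of admissible surgery data---consisting, for each closed component of the cobordism, of an embedded arc in the ambient Euclidean space from that component to the incoming boundary together with a thickening to a $1$-handle---is nonempty and highly connected, by a general position argument using $d \geq 2$. I would then assemble this into an auxiliary sheaf $\widetilde{\Psi}_d$ of pairs (cobordism, surgery datum), with forgetful maps $\widetilde{\Psi}_d \to \Psi_d$ and $\widetilde{\Psi}_d \to \Psi_{d,\partial}$ (the latter sending a pair to the surgered cobordism). The first map has contractible fibers, hence is a weak equivalence on realizations by a standard microfibration argument \emph{\`a la} \cite{GMTW}; and the diagram of sheaves, together with an explicit concordance between the two maps to $\Psi_d$ (the inclusion and the composite via $\widetilde{\Psi}_d$), exhibits the inclusion $|\Psi_{d,\partial}| \to |\Psi_d|$ as a weak equivalence.

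The main obstacle is the globalization step: verifying that the space of surgery data is contractible in a family, and producing the explicit concordance between a cobordism and its positive-boundary replacement. This is precisely the technical core of the argument and is where the hypothesis $d \geq 2$ is essential. In dimension $d=1$, there is no room to attach a $1$-handle between two points of an embedded $1$-manifold without running into the rest of the configuration in a way that breaks the homotopy argument, which explains why the $d=1$ case must be treated by entirely different methods, as done in this paper via the identification with $\mathbb{A}_2$ and the explicit computation of $B\mathbb{A}_2$.
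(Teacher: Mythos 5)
The paper does not prove this statement at all: it is quoted as Theorem 6.1 of \cite{GMTW} (with the opposite convention on which boundary is ``positive'') and used as a black box, so the only meaningful comparison is with the proof in \cite{GMTW} itself. Your sketch does capture the geometric idea behind that proof -- attach tubes joining the offending components to the incoming boundary, which is possible in a parametrized way when $d\geq 2$ -- but as written it has a genuine gap that prevents the ``auxiliary sheaf with contractible fibers'' strategy from working.

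First, the positive boundary condition is not about \emph{closed} components: it requires every component to meet the incoming boundary, so the problematic components are all those with empty incoming boundary, e.g.\ a disc $D^d$ regarded as a morphism $\emptyset\to S^{d-1}$. Second, and more seriously, such a morphism cannot be surgered into $\mathcal{C}_{d,\partial}$ at all -- there is no incoming boundary to connect it to -- so the space of surgery data over that section of $\Psi_d$ is \emph{empty}, not contractible, and the forgetful map $\widetilde{\Psi}_d\to\Psi_d$ is not a weak equivalence. This is why the argument in \cite{GMTW} is not carried out morphism-by-morphism (or section-by-section of a sheaf of single cobordisms) but on the nerve $N_\bullet\mathcal{C}_d$: within a string of composable cobordisms one can pipe a bad component backwards through earlier morphisms of the string, or one exploits that both classifying spaces are group-complete, and the induction over simplicial degree together with the contractibility of the piping data in families is the technical core. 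If you want to repair your outline, you must either restrict the surgery construction to $k$-simplices of the nerve where the piping is actually possible and control what happens on the degenerate/empty-source pieces, or follow \cite{GMTW} more closely; the hypothesis $d\geq 2$ enters exactly where you say it does, but the globalization you defer is not merely technical -- it is where the statement could fail if set up as you propose.
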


The purpose of this section is to show that this is true also in the case $d=1$.

\subsection{$\mathcal{C}_{1, \partial}$ and $\mathbb{A}_2$} \label{cob4.2}

We may regard $\mathbb{A}_2$ as a cobordism category of 1-dimensional cobordisms by thinking of its objects as collections of points and its morphisms as 1-dimensional cobordisms each of whose components has non-empty incoming boundary. Then  $\mathbb{A}_2$ is essentially the category of components of $\mathcal{C}_{1, \partial}$ and we claim that the projection functor is a weak equivalence. Roughly speaking, the reason is because the relevant diffeomorphism groups are homotopically trivial in this dimension.

Consider an auxiliary double category $\mathcal{C}^{\square}_{1,\partial}$:
\begin{itemize}
\item an $object$ is an ordered configuration of $n$ points in $\R^{\infty}$ 
together with a real number that specifies a slice $\{a\} \times \R^{\infty} \subseteq \R \times \R^{\infty}$,
\item a $horizontal$ $morphism$ is a parametrised embedded 1-dimensional cobordism $W$ in $[a,b] \times \R^{\infty}$ with collared boundary such that $\partial W = W \cap (\{a, b\} \times \R^{\infty})$ and each of whose components has non-empty incoming boundary,
\item a $vertical$ $morphism$ is a permutation of a finite set of points,
\item a $square$ is a diffeomorphism of the embedded cobordisms that restricts to the corresponding product diffeomorphism between the collared boundaries.
\end{itemize}

\begin{proposition} \label{double category}
There are weak homotopy equivalences 
\begin{displaymath}
B\mathcal{C}_{1,\partial} \stackrel{\simeq}{\leftarrow} B\mathcal{C}^{\square}_{1,\partial} \stackrel{\simeq}{\rightarrow} B\mathbb{A}_2.
\end{displaymath}
\end{proposition}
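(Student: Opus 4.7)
The plan is to view the double category $\mathcal{C}^{\square}_{1,\partial}$ via its double nerve as a bisimplicial space $X_{\bullet\bullet}$, and analyse the two orders of realisation. Each order recovers one of the target categories: the left-pointing forgetful functor to $\mathcal{C}_{1,\partial}$ drops the ordering of boundary points and the parametrisation of cobordisms (data accounted for by the vertical permutations and the diffeomorphism squares), while the right-pointing functor to $\mathbb{A}_2$ retains only the homeomorphism type of each cobordism together with the permutation endomorphisms of its objects.

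For the equivalence $B\mathcal{C}^{\square}_{1,\partial} \stackrel{\simeq}{\to} B\mathcal{C}_{1,\partial}$, I would realise vertically first. At the object level the vertical category consists of ordered configurations in $\R^{\infty}$ with permutation morphisms, so its classifying space is $\bigsqcup_n \text{Conf}_n(\R^{\infty})_{h\Sigma_n}$; since $\Sigma_n$ acts freely on a contractible space this is weakly equivalent to $\bigsqcup_n B\Sigma_n$, matching the object space of $\mathcal{C}_{1,\partial}$. At the level of horizontal morphisms the analogous argument yields $\bigsqcup_{[W]} B\Diff(W,\partial W)$, using that the space of collared embeddings of a fixed topological $1$-cobordism $W$ in $[a,b]\times \R^{\infty}$ is contractible and carries a free proper $\Diff(W,\partial W)$-action; this coincides with the morphism space of $\mathcal{C}_{1,\partial}$ up to weak equivalence. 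Level-wise equivalences then assemble into the asserted global equivalence via a standard bisimplicial realisation lemma.

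For the equivalence $B\mathcal{C}^{\square}_{1,\partial} \stackrel{\simeq}{\to} B\mathbb{A}_2$, I would realise horizontally first, using the decisive dimension-one input that $\Diff(W,\partial W)$ is contractible for every $1$-cobordism $W$ whose components are intervals or U-shapes (with collared boundary): such components have trivial mapping class group rel boundary and contractible identity component, since an orientation-preserving diffeomorphism of $[0,1]$ fixing the endpoints is monotonic and straight-line homotopic to the identity. Combined with the contractibility of the embedding space, this forces the horizontal classifying space at each vertical level to reduce to the discrete set of homeomorphism types of composable chains of cobordisms. The residual vertical direction then contributes only the permutations, so the resulting bisimplicial space is weakly equivalent to the nerve of $\mathbb{A}_2$ viewed as an ordinary category.

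The main obstacle is the technical bookkeeping required for level-wise equivalences of bisimplicial spaces to descend to equivalences of their realisations in both orders. This needs Reedy properness (or goodness) of the bisimplicial spaces involved, careful handling of the collar germs so that the $\Diff$-quotient genuinely models a homotopy quotient, and a verification that the joint action of the symmetric group on boundary labellings and of the diffeomorphism group on cobordisms (via squares) is free and proper, so that strict quotients coincide with homotopy quotients. Once these checks are in place the comparison is routine; the conceptually essential point—and what makes $\mathbb{A}_2$ correct as a model for $\mathcal{C}_{1,\partial}$—is the contractibility of $\Diff(W,\partial W)$ in dimension one.
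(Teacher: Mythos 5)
Your proposal follows essentially the same route as the paper: both arguments regard $B\mathcal{C}^{\square}_{1,\partial}$ as the realisation of the double nerve, recover $N\mathcal{C}_{1,\partial}$ by collapsing the vertical (permutation/diffeomorphism) direction first, and deduce the comparison with $\mathbb{A}_2$ from the contractibility of the relevant embedding spaces together with the contractibility of $\Diff(W,\partial W)$ for positive--boundary $1$-cobordisms, whose components are all intervals. The only organisational difference is in the second equivalence: the paper passes to the diagonal simplicial space $p\mapsto N_{pp}\mathcal{C}^{\square}_{1,\partial}$ and writes down an explicit simplicial map $\mathcal{L}$ to $N\mathbb{A}_2$ that sends a square to the composite of its vertical permutation with its horizontal cobordism, whereas you realise horizontally first and then try to dispose of the residual vertical direction.

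That last step is the one place where ``routine bookkeeping'' hides the only non-formal point. After discretising the horizontal direction you are left with the double nerve of the double category whose horizontal morphisms are those of $\mathbb{A}_2$, whose vertical morphisms are the permutations, and whose squares are commutative squares; this bisimplicial set is \emph{not} levelwise equivalent to $N\mathbb{A}_2$ regarded as constant in the vertical direction (already in bidegree $(1,0)$ it is $\coprod_n \Sigma_n$ rather than $\N$), so ``the residual vertical direction contributes only the permutations'' does not by itself produce $B\mathbb{A}_2$. What rescues the argument is that the permutations are precisely the isomorphisms of $\mathbb{A}_2$: the horizontal category of vertical $p$-chains is equivalent to $\mathbb{A}_2$ via the embedding of constant chains, compatibly with the simplicial operators in $p$, so the extra vertical direction is absorbed and the total realisation is $B\mathbb{A}_2$. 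You should make this absorption explicit (or use the paper's diagonal composite map, which is the standard device for exactly this purpose); with that addition your argument is complete.
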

\begin{proof}
The nerve of the double category $\mathcal{C}^{\square}_{1,\partial}$ is a bisimplicial space whose $(p, q)$ simplices are $p \times q$ blocks of squares in $\mathcal{C}^{\square}_{1,\partial}$. If we realize with respect to the vertical direction $p$ first, we obtain the simplicial space of the nerve of $\mathcal{C}_{1,\partial}$. Therefore there is a weak homotopy equivalence $B \mathcal{C}^{\square}_{1,\partial} \stackrel{\simeq}{\to} B\mathcal{C}_{1,\partial}$. Let $\Delta(\mathcal{C}^{\square}_{1,\partial})$ be the diagonal simplicial space given by $p \mapsto N_{pp} \mathcal{C}^{\square}_{1,\partial}$. There is a simplicial map $\mathcal{L}: \Delta(\mathcal{C}^{\square}_{1,\partial}) \to N\mathbb{A}_2$ defined as follows: on 0-simplices, it just forgets the embedding, on 1-simplices it takes a square 
\begin{displaymath}
\xymatrix{
. \ar[r]^W \ar[d]^{\sigma_0} & . \ar[d]^{\sigma_1} \\
. \ar[r]^{W'} & . 
}
\end{displaymath}
to the morphism in $\mathbb{A}_2$ that is given by the composition of the permutation $\sigma_0$ followed by the cobordism $W'$, i.e., essentially it forgets the embedding, and so on. It is not difficult to see that this process defines a simplicial map. Furthermore, $\mathcal{L}$ is a weak homotopy equivalence levelwise because the space of diffeomorphisms of a line segment that fixes the boundary points is contractible. Therefore $|\mathcal{L}|$ is also a weak homotopy equivalence, as required.
\end{proof}

\begin{proposition} There is a weak homotopy equivalence of infinite loop spaces $B \mathbb{A}_2 \stackrel{\simeq}{\to} \Omega^{\infty-1} MT(1)$.
\end{proposition}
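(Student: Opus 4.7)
The plan is to recognise $B\mathbb{A}_2$ and $\Omega^{\infty-1} MT(1)$ as the same ``extension to the right'' of a common homotopy fiber sequence of infinite loop spaces whose first map is the stable transfer of the universal double covering $Q\R P^{\infty}_+ \to QS^0$.

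First, Theorem \ref{mainA} with $d=2$, combined with $B\Sigma_2 \simeq \R P^{\infty}$, gives a homotopy fiber sequence of infinite loop spaces
\[
Q\R P^{\infty}_+ \stackrel{\tau_2}{\to} QS^0 \to B\mathbb{A}_2,
\]
and by Proposition \ref{tr} the map $\tau_2$ is the stable transfer of the universal double cover. On the other hand, rotating the Galatius-Madsen-Tillmann-Weiss cofiber sequence \eqref{MTO-cof} for $d=1$, namely $MT(1) \to \Sigma^{\infty} BO(1)_+ \to MT(0) = \mathbb{S}$, one step to the right and applying $\Omega^{\infty}$ yields a homotopy fiber sequence
\[
Q\R P^{\infty}_+ \to QS^0 \to \Omega^{\infty-1} MT(1),
\]
whose first map is again the stable transfer of the universal double cover, as recorded in \eqref{g-spectra}.

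Since $B\mathbb{A}_2$ is an infinite loop space (Section \ref{cob3.0}) and the maps in Theorem \ref{mainA} are infinite loop maps, the first fiber sequence corresponds to a cofiber sequence of connective spectra $\Sigma^{\infty} \R P^{\infty}_+ \to \mathbb{S} \to \mathbf{A}$ with $\Omega^{\infty} \mathbf{A} \simeq B\mathbb{A}_2$. The second step exhibits $\Omega^{\infty-1} MT(1)$ as $\Omega^{\infty}$ of the cofiber $\Sigma MT(1)$ of the same map of spectra. Since cofibers in the stable homotopy category are unique up to equivalence, this gives an equivalence $\mathbf{A} \simeq \Sigma MT(1)$, and applying $\Omega^{\infty}$ produces the asserted weak homotopy equivalence.

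The main point to verify is that the two instances of ``the stable transfer of the universal double cover'' genuinely agree as infinite loop maps, and not merely as maps of underlying spaces. This should follow because both are obtained by the same universal construction --- the pretransfer associated with the covering $E\Sigma_2 \to \R P^{\infty}$, assembled into an infinite loop map as reviewed in Section \ref{cob3.2} --- and so coincide in the stable homotopy category. Given this identification, the uniqueness of cofibers completes the argument.
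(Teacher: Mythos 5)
Your argument is correct and is essentially the paper's own proof: the paper likewise deduces the equivalence by comparing the homotopy fiber sequence \eqref{g-spectra} with the sequence of Theorem \ref{mainA} for $d=2$, using Proposition \ref{tr} to identify $\tau_2$ with the stable transfer of the universal double covering. Your extra remark about checking that the two transfers agree as maps of spectra (not just of spaces) is exactly the point that Proposition \ref{tr} together with the references for \eqref{g-spectra} is meant to supply, so nothing is missing.
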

\begin{proof}
This follows from a comparison between the homotopy fiber sequence \eqref{g-spectra} above and the homotopy fiber sequence of Theorem \ref{mainA} for $d=2$  by the identification of the map $\tau_2$ in Proposition \ref{tr}.
\end{proof} 

\begin{corollary} \label{1 positive boundary}
The inclusion  $B\mathcal{C}_{1,\partial} \stackrel{\simeq}{\to} B\mathcal{C}_1$ is a weak homotopy equivalence of infinite loop spaces.
\end{corollary}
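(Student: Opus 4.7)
The plan is to deduce this by a two-out-of-three argument applied to the triangle
\[
\xymatrix{
B\mathcal{C}_{1,\partial} \ar[rr]^{\iota} \ar[dr]_{\simeq} & & B\mathcal{C}_{1} \ar[dl]^{\alpha}_{\simeq} \\
& \Omega^{\infty-1}MT(1) &
}
\]
where $\iota$ is the inclusion, $\alpha$ is the GMTW weak equivalence for $d=1$, and the slanted arrow on the left is the weak equivalence produced in the preceding proposition via the zigzag
\[
B\mathcal{C}_{1,\partial} \stackrel{\simeq}{\leftarrow} B\mathcal{C}_{1,\partial}^{\square} \stackrel{\simeq}{\to} B\mathbb{A}_{2} \stackrel{\simeq}{\to} \Omega^{\infty-1}MT(1).
\]
Once this triangle is homotopy commutative, two of the three arrows are weak homotopy equivalences, forcing $\iota$ to be one as well.

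The key step is to verify that $\alpha \circ \iota$ agrees up to homotopy with the composite along the left side. Both maps are ultimately induced by a Pontryagin--Thom/scanning procedure applied to embedded 1-dimensional cobordisms: the GMTW map $\alpha$ sends a cobordism to its Thom collapse onto $MT(1)$, while the weak equivalence $B\mathbb{A}_{2} \to \Omega^{\infty-1}MT(1)$ constructed in the preceding proposition was obtained precisely by aligning the homotopy fiber sequence of Theorem \ref{mainA} for $d=2$ with the GMTW fiber sequence \eqref{g-spectra}, using Proposition \ref{tr} to identify $\tau_{2}$ with the stable transfer of the universal double covering. Since the inclusion $\mathcal{C}_{1,\partial} \hookrightarrow \mathcal{C}_{1}$ is a symmetric monoidal subcategory inclusion and both vertical identifications are compatible with the disjoint union pairing, the triangle commutes up to homotopy; on $\pi_{0}$ both horizontal/slanted identifications yield the standard isomorphism $\Z_{2} \cong \pi_{0}B\mathbb{A}_{2} \cong \pi_{0}B\mathcal{C}_{1} \cong \mathcal{N}_{0}$, and $\iota$ induces the identity there.

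The infinite loop statement is then automatic: the symmetric monoidal structures on $\mathcal{C}_{1,\partial}$ and $\mathcal{C}_{1}$ given by disjoint union make both classifying spaces into infinite loop spaces and $\iota$ into a map of $E_\infty$-spaces, so a weak homotopy equivalence is automatically an equivalence of infinite loop spaces. The principal obstacle is the homotopy commutativity of the triangle, which requires unpacking how the map $B\mathbb{A}_{2} \to \Omega^{\infty-1}MT(1)$ was produced; however, this reduces to the already-established identifications of the two homotopy fiber sequences and the transfer map $\tau_{2}$, so no new input is needed.
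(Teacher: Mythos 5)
Your argument is exactly the paper's: the paper proves this corollary by exhibiting the homotopy commutative diagram comparing the fiber sequence of Theorem \ref{mainA} (for $d=2$) with the GMTW sequence \eqref{g-spectra}, whose rightmost square is precisely your triangle $B\mathcal{C}_{1,\partial}\simeq B\mathbb{A}_2\to\Omega^{\infty-1}MT(1)\leftarrow B\mathcal{C}_1$, and then applies two-out-of-three. Your identification of the commutativity of that square as the only real point to check, resting on Proposition \ref{tr} and the comparison of fiber sequences, matches the paper's (equally brief) justification.
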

\begin{proof}
This follows from the homotopy commutative diagram
\begin{displaymath}
\xymatrix{
Q\R P^{\infty}_+  \ar[r] & QS^0  \ar[r] & \Omega^{\infty - 1}MT(1) & 
B\mathcal{C}_1 \ar[l]_(.35){\simeq}  \\
\Z \times {B({\Sigma}_{\infty} \wr {\Sigma}_2})^+  \ar[u]^{\simeq} \ar[r] & \Z \times {B{\Sigma}_{\infty}}^+ \ar[r] \ar[u]^{\simeq} &  B\mathbb{A}_2 \ar[u]^{\simeq}  \ar[r]^{\simeq} & 
B\mathcal{C}_{1,\partial}  \ar[u]
}
\end{displaymath}
\end{proof}

\subsection{$\mathcal{C}^+_{1,\partial}$ and $\mathbb{A}_2^{\pm}$} \label{cob4.3}

Let $\mathcal{C}^+_{1, \partial}$ be the positive boundary subcategory of the embedded cobordism category $\mathcal{C}^+_1$ of $oriented$ 1-dimensional cobordisms (see \cite{GMTW}). Analogously to the unoriented case, there are Thom spectra $MT(d)^+$ associated with the complement of the universal bundle of oriented $d$-dimensional bundles. The main result of \cite{GMTW} in the oriented $1$-dimensional case says that there is a weak homotopy equivalence 
\begin{displaymath}
B\mathcal{C}^+_1 \stackrel{\simeq}{\to} \Omega^{\infty - 1} MT(1)^+.
\end{displaymath}
Furthermore, there is an analogous cofiber sequence of spectra
\begin{displaymath}
MT(1)^+ \to \Sigma^{\infty} BSO(1)_+ \simeq \Sigma^{\infty} S^0 \to MT(0)^+ \simeq \Sigma^{\infty} S^0 \vee \Sigma^{\infty} S^0
\end{displaymath}
that induces a homotopy fiber sequence of infinite loop spaces
\begin{equation*} \label{or-g-spectra}
QS^0  \stackrel{\Delta}{\rightarrow} QS^0 \times QS^0 \to \Omega^{\infty -1} MT(1)^+
\end{equation*}
and the first map is the diagonal map (see \cite[Remark 3.2]{GMTW}).

The cobordism category $\mathbb{A}_2^{\pm}$ is an oriented version of $\mathbb{A}_2$ where the points are oriented, i.e. labelled by ``$+$'' or ``$-$''. More precisely, the set of objects is the collection of pairs $(m,n)$ of non-negative integers that we think of as the set of positively oriented and negatively oriented points respectively. A morphism from $(m,n)$ to $(m',n')$ is a cobordism in $\mathbb{A}_2$ from $m+n$ to $m'+n'$ that respects the orientation of the points. More specifically, a morphism is a 1-dimensional cobordism each of whose components is a unit interval that connects points of different orientation if and only if they are both in the source of the morphism. 

$\mathbb{A}_2 ^{\pm}$ is a symmetric monoidal category under the disjoint union of cobordisms and $\pi_0 B\mathbb{A}_2 ^{\pm} \cong \Z$, so $B \mathbb{A}_2 ^{\pm}$ is an infinite loop space.

\begin{theorem} \label{oriented version}
There is a homotopy fiber sequence of infinite loop spaces
\begin{displaymath}
 QS^0 \stackrel{\Delta}{\rightarrow} QS^0 \times QS^0 \to B\mathbb{A}_2^{\pm}.
\end{displaymath}
\end{theorem}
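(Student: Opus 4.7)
The plan is to mirror the argument of section \ref{cob3} that established Theorem \ref{mainA}, with one key combinatorial modification accounting for orientations. Write $\overline{\mathbb{A}}_2^{\pm}$ for the component of $\mathbb{A}_2^{\pm}$ containing $(0,0)$, whose objects are the pairs $\mathbf{n} := (n,n)$ for $n \geq 0$. Since permutations must respect orientation, the endomorphism monoid is the discrete group
\begin{displaymath}
\End(\mathbf{n}) = \Sigma_n \times \Sigma_n.
\end{displaymath}
Let $c \colon \mathbf{1} \to \mathbf{0}$ denote the unique cap, set $\alpha_n := 1_{\mathbf{n}} \sqcup c \colon \mathbf{n+1} \to \mathbf{n}$, and take $\mathcal{I} = \mathcal{I}(c)$ as in Example \ref{example2.1}. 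Tensoring with $1_{\mathbf{1}}$ provides the required continuous homomorphisms $\alpha_n^{\ast} \colon \End(\mathbf{n}) \to \End(\mathbf{n+1})$, so $\overline{\mathbb{A}}_2^{\pm}$ stabilises along $\mathcal{I}$.

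The first critical step is to identify the little comma categories. A morphism $\mathbf{n} \to \mathbf{k}$ is specified by a pair of injections $[k] \hookrightarrow [n]$ (the $+$ and $-$ points sent to the target) together with a bijection between the two complementary sets of capped points. The category $\mathbf{n} \downarrow_{\overline{\mathbb{A}}_2^{\pm}} \mathbf{k}$ is a connected groupoid, and a direct verification shows that at any object the automorphism group is a copy of $\Sigma_{n-k}$: an automorphism is a pair $(\sigma_+,\sigma_-) \in \Sigma_n \times \Sigma_n$ fixing the target-mapped points pointwise, with $\sigma_-$ forced to equal the conjugate of $\sigma_+$ by the cap bijection. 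In particular, via the forgetful functor, $\Sigma_{n-k}$ embeds diagonally into $\Sigma_n \times \Sigma_n$ on the $n-k$ capped indices. An exact analogue of the argument in Proposition \ref{stab} then shows, using the fact that conjugation by elements is invisible on homology in the stable range, that $\overline{\mathbb{A}}_2^{\pm}$ is $H\Z$-stable along $\mathcal{I}$ and that $\h B(- \downarrow_{\overline{\mathbb{A}}_2^{\pm}} \mathbf{k}) \simeq B\Sigma_{\infty}$.

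Feeding this into Theorem \ref{ggct} produces a homology fiber sequence
\begin{displaymath}
B\Sigma_{\infty} \xrightarrow{\Delta} B(\Sigma_{\infty} \times \Sigma_{\infty}) \to B\overline{\mathbb{A}}_2^{\pm},
\end{displaymath}
whose first map is the diagonal by the identification of the forgetful functor above. To upgrade to a homotopy fiber sequence via Proposition \ref{+}, note that $B\Sigma_{\infty}^+$ and $B(\Sigma_{\infty} \times \Sigma_{\infty})^+ \simeq Q_0 S^0 \times Q_0 S^0$ are infinite loop spaces and hence nilpotent, while $B\overline{\mathbb{A}}_2^{\pm}$ is the identity component of the H-space $B\mathbb{A}_2^{\pm}$ and therefore has abelian fundamental group, so its maximal perfect subgroup is trivial. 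Applying the plus construction and using the group completion identifications $\Z \times B\Sigma_{\infty}^+ \simeq QS^0$ and $\Z^2 \times B(\Sigma_{\infty} \times \Sigma_{\infty})^+ \simeq QS^0 \times QS^0$, together with the extension across all components of $\mathbb{A}_2^{\pm}$ (indexed by $\pi_0 B\mathbb{A}_2^{\pm} \cong \Z$), yields the asserted sequence $QS^0 \xrightarrow{\Delta} QS^0 \times QS^0 \to B\mathbb{A}_2^{\pm}$. The fact that these are maps of infinite loop spaces follows from the symmetric monoidal structure of $\mathbb{A}_2^{\pm}$.

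The step I expect to be the main obstacle is the comma-category computation: one must track carefully how orientation propagates through cap pairings in order to see that the automorphism group is $\Sigma_{n-k}$ embedded diagonally, rather than the wreath product $\Sigma_{n-k}\wr \Sigma_2$ encountered in the unoriented case. This is precisely what produces the diagonal map $\Delta$ in the fiber sequence rather than a transfer-type map, matching the cofiber sequence $MT(1)^+ \to \Sigma^{\infty} S^0 \to \Sigma^{\infty} S^0 \vee \Sigma^{\infty} S^0$ from \cite{GMTW}.
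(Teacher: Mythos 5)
Your proposal follows exactly the route taken in the paper: stabilise along the subcategory generated by the unique cap $(1,1)\to(0,0)$, identify the little comma categories $(n,n)\downarrow (k,k)$ with $\Sigma_{n-k}$ sitting diagonally in $\End((n,n))\cong \Sigma_n\times\Sigma_n$, apply Theorem \ref{ggct} to get the homology fiber sequence $B\Sigma_\infty \to B\Sigma_\infty\times B\Sigma_\infty \to B\overline{\mathbb{A}}^{\pm}_{2}$ with diagonal first map, and upgrade via Proposition \ref{+} and group completion. The argument is correct, and your explicit verification of the comma-category automorphisms and of the hypotheses of Proposition \ref{+} only fills in details the paper leaves implicit.
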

\begin{proof}
The method of the proof is the same as for Theorem \ref{mainA}. The unique morphism $\alpha: (1,1) \to (0,0)$ defines an $H\Z$-stable diagram in the component $\mathbb{A}^{\pm}_{2,(0,0)}$ of $\mathbb{A}^{\pm}_2$ that contains the object $(0,0)$. The little comma categories are equivalent with the symmetric groups, i.e., there are equivalences 
$$ \Sigma_n \stackrel{\sim}{\to} (n+k, n+k) \downarrow_{\mathbb{A}^{\pm}_{2,(0,0)}} (k,k)$$
that respect the stabilisation defined by $\alpha$. Thus the homotopy colimit of the little comma categories $B(\int_{\mathcal{I}(\alpha)} - \downarrow_{\mathbb{A}_{2, (0,0)}^{\pm}} (k,k))$ can be identified canonically with $B\Sigma_{\infty}$ for every $(k,k) \in \mathbb{A}_{2,(0,0)}^{\pm}$. The monoid of endomorphisms of $(n,n)$ is isomorphic with the group $\Sigma_n \times \Sigma_n$ and the forgetful functor 
$ (n,n) \downarrow_{\mathbb{A}_{2, (0,0)}^{\pm}} (0,0) \to \End((n,n))$ can be identified with the diagonal group homomorphism $\Sigma_n \to \Sigma_n \times \Sigma_n$. By Theorem \ref{ggct}, there is a homology fiber sequence
\begin{displaymath}
B\Sigma_{\infty} \to B\Sigma_{\infty} \times B\Sigma_{\infty} \to B\mathbb{A}^{\pm}_{2,(0,0)}
\end{displaymath}
and the first map is induced by the diagonal homomorphisms $\Sigma_n \to \Sigma_n \times \Sigma_n$. The conditions of Proposition \ref{+} are satisfied, so we obtain a homotopy fiber sequence $Q_0 S^0 \stackrel{\Delta}{\rightarrow} Q_0S^0 \times Q_0S^0 \to B\mathbb{A}^{\pm}_{2,(0,0)}$. Finally, the components work out to produce the required homotopy fiber sequence. 
 \end{proof}

\begin{corollary}
There is a weak homotopy equivalence of infinite loop spaces $B\mathbb{A}_2^{\pm} \stackrel{\simeq}{\to} QS^0$.
\end{corollary}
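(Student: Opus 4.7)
The plan is to leverage the homotopy fiber sequence of infinite loop spaces
\begin{displaymath}
QS^0 \stackrel{\Delta}{\to} QS^0 \times QS^0 \to B\mathbb{A}_2^{\pm}
\end{displaymath}
from Theorem \ref{oriented version} and to identify the third term with $QS^0$ by computing a cofiber in the stable category. The whole sequence lives in the category of infinite loop spaces and infinite loop maps, hence passes to a fiber sequence of connective spectra; since fiber and cofiber sequences coincide for spectra, $B\mathbb{A}_2^{\pm}$ is weakly equivalent to $\Omega^{\infty}$ of the cofiber of the diagonal $\Delta\colon \mathbb{S} \to \mathbb{S} \vee \mathbb{S}$ of the sphere spectrum.

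The cofiber is then immediate: either projection $\pi_i\colon \mathbb{S} \vee \mathbb{S} \to \mathbb{S}$ retracts $\Delta$, so $\Delta$ is a split summand inclusion in the triangulated stable category, and its cofiber is the complementary summand, which is again $\mathbb{S}$. Applying $\Omega^{\infty}$ then gives the desired weak equivalence of infinite loop spaces $B\mathbb{A}_2^{\pm} \simeq QS^0$.

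A more concrete packaging of this equivalence proceeds via the subtraction infinite loop map $\mu\colon QS^0 \times QS^0 \to QS^0$, $(a,b) \mapsto b - a$, which annihilates $\Delta$. The shearing self-equivalence $(a,b) \mapsto (a, b-a)$ of $QS^0 \times QS^0$ carries $\Delta$ to the first-factor inclusion and $\mu$ to the second projection, so $QS^0 \stackrel{\Delta}{\to} QS^0 \times QS^0 \stackrel{\mu}{\to} QS^0$ is a split homotopy fiber sequence of infinite loop spaces. Comparing this with Theorem \ref{oriented version} identifies $B\mathbb{A}_2^{\pm}$ with $QS^0$. Given Theorem \ref{oriented version}, there is no real obstacle here: the whole statement reduces to the observation that the diagonal $\mathbb{S} \to \mathbb{S} \vee \mathbb{S}$ is a split monomorphism in spectra.
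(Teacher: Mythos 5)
Your argument is correct and is exactly the reasoning the paper leaves implicit: the corollary is stated with no proof as an immediate consequence of Theorem \ref{oriented version}, the point being that the diagonal $\mathbb{S} \to \mathbb{S} \vee \mathbb{S}$ is split, so its cofiber is again $\mathbb{S}$ and the base of the fiber sequence is $QS^0$. The only step worth making explicit is that the space-level homotopy fiber sequence really deloops to a (co)fiber sequence of \emph{connective} spectra, which requires surjectivity of $\pi_0(QS^0 \times QS^0) \to \pi_0 B\mathbb{A}_2^{\pm} \cong \Z$; this holds here since $(m,n)$ maps to $m-n$, and with it your shearing/splitting argument goes through.
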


\begin{proposition}
There is a weak homotopy equivalence $ B\mathbb{A}_2^{\pm} \stackrel{\simeq}{\to} B\mathcal{C}^+_{1,\partial}$.
\end{proposition}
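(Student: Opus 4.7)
The plan is to mimic the unoriented argument of Proposition \ref{double category} by introducing an oriented version of the auxiliary double category $\mathcal{C}^{\square}_{1,\partial}$. Define $\mathcal{C}^{\square,+}_{1,\partial}$ exactly as $\mathcal{C}^{\square}_{1,\partial}$ except that every embedded 1-manifold comes equipped with an orientation, horizontal morphisms are oriented embedded cobordisms (with the induced orientation on the collared boundary), vertical morphisms are orientation-preserving permutations of the oriented point configurations, and squares are orientation-preserving diffeomorphisms of the embedded cobordisms restricting to the product diffeomorphisms on the collared boundaries. As in the unoriented case, the condition that every component has non-empty incoming boundary is retained.

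First I would realize in the vertical direction to obtain the identification $B\mathcal{C}^{\square,+}_{1,\partial} \stackrel{\simeq}{\to} B\mathcal{C}^+_{1,\partial}$; this is formal and proceeds exactly as in the proof of Proposition \ref{double category}, since the vertical structure consists only of permutations of the labelled point sets and the resulting simplicial object is precisely the nerve of $\mathcal{C}^+_{1,\partial}$. Next I would construct a simplicial map $\mathcal{L}^+ \colon \Delta(\mathcal{C}^{\square,+}_{1,\partial}) \to N\mathbb{A}^{\pm}_2$ from the diagonal by forgetting the embedding and retaining only the combinatorial data of the oriented cobordism, together with the pre-composition by the vertical permutation as in the unoriented case. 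The only new point is to check that the orientation on a component of the 1-cobordism together with the positive-boundary condition uniquely determines the labelling of its boundary points by $+$ or $-$ in the way prescribed by the definition of $\mathbb{A}^{\pm}_2$ (i.e.\ a component with two incoming endpoints is labelled $(+, -)$, while a component with one incoming and one outgoing endpoint carries matching labels); this makes $\mathcal{L}^+$ a well-defined simplicial map.

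The core technical input is that $\mathcal{L}^+$ is a levelwise weak homotopy equivalence. At each simplicial level one must show that the space of embedded oriented collared 1-cobordisms of a fixed combinatorial type, modulo orientation-preserving boundary-fixing diffeomorphisms, is contractible. This reduces to the contractibility of the space of orientation-preserving diffeomorphisms of a closed interval fixing the endpoints, together with the contractibility of the relevant embedding spaces in $\R^{\infty}$, which are both standard. Concluding with the geometric realization, the map $|\mathcal{L}^+|$ is a weak homotopy equivalence, and combining with the vertical realization identification yields the zig-zag
\begin{displaymath}
B\mathcal{C}^+_{1,\partial} \stackrel{\simeq}{\leftarrow} B\mathcal{C}^{\square,+}_{1,\partial} \stackrel{\simeq}{\rightarrow} B\mathbb{A}^{\pm}_2,
\end{displaymath}
producing the required weak homotopy equivalence.

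The main obstacle I anticipate is purely book-keeping: verifying that orientations on the 1-cobordisms, the collared boundaries, and the incoming/outgoing boundary partition are compatibly matched so that $\mathcal{L}^+$ actually lands in $N\mathbb{A}^{\pm}_2$ (rather than in some larger category) and commutes with all face and degeneracy maps. Once this compatibility is in place, the contractibility arguments that drive the unoriented proof carry over without change.
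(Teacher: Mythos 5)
Your proposal is correct and is exactly the argument the paper intends: its proof of this proposition is literally the single line ``Similarly with Proposition \ref{double category}'', i.e.\ the oriented analogue of the auxiliary double category argument, which you have spelled out faithfully (including the key book-keeping point that the orientation of each interval component, together with the positive-boundary condition, determines the $\pm$ labelling required by the definition of $\mathbb{A}_2^{\pm}$).
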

\begin{proof}
Similarly with Proposition \ref{double category}.
\end{proof}

\begin{corollary} \label{orposbd}
The inclusion  $B\mathcal{C}^+_{1,\partial} \stackrel{\simeq}{\to} B\mathcal{C}^+_1$ is a weak homotopy equivalence of infinite loop spaces.
\end{corollary}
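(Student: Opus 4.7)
The plan is to mirror the proof of Corollary \ref{1 positive boundary} by comparing two homotopy fiber sequences through a homotopy commutative diagram. The preceding Proposition supplies the left half of what we need: a weak homotopy equivalence $B\mathbb{A}_2^{\pm} \stackrel{\simeq}{\to} B\mathcal{C}^+_{1,\partial}$. The main result of \cite{GMTW} in the oriented $1$-dimensional case supplies the right half: a weak homotopy equivalence $B\mathcal{C}^+_1 \stackrel{\simeq}{\to} \Omega^{\infty-1} MT(1)^+$. Thus it suffices to produce a weak homotopy equivalence $B\mathbb{A}_2^{\pm} \stackrel{\simeq}{\to} \Omega^{\infty-1} MT(1)^+$ that is compatible with the inclusion $B\mathcal{C}^+_{1, \partial} \hookrightarrow B\mathcal{C}^+_1$.

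First I would write down, side by side, the two homotopy fiber sequences of infinite loop spaces that end in these two target spaces: on one hand Theorem \ref{oriented version} gives
\begin{displaymath}
QS^0 \stackrel{\Delta}{\to} QS^0 \times QS^0 \to B\mathbb{A}_2^{\pm},
\end{displaymath}
while the cofiber sequence $MT(1)^+ \to \Sigma^{\infty} S^0 \to \Sigma^{\infty} S^0 \vee \Sigma^{\infty} S^0$ in \cite{GMTW} gives
\begin{displaymath}
QS^0 \stackrel{\Delta}{\to} QS^0 \times QS^0 \to \Omega^{\infty - 1} MT(1)^+.
\end{displaymath}
In both sequences the first map is the diagonal, which is the key point: in Theorem \ref{oriented version} it arose from the diagonal group homomorphisms $\Sigma_n \to \Sigma_n \times \Sigma_n$, and in \cite[Remark 3.2]{GMTW} the stably-split map $\Sigma^{\infty} S^0 \to \Sigma^{\infty} S^0 \vee \Sigma^{\infty} S^0$ is identified with the diagonal.

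Next I would assemble the homotopy commutative diagram
\begin{displaymath}
\xymatrix{
QS^0 \ar[r]^-{\Delta} \ar@{=}[d] & QS^0 \times QS^0 \ar[r] \ar@{=}[d] & \Omega^{\infty-1} MT(1)^+ & B\mathcal{C}^+_1 \ar[l]_-{\simeq} \\
QS^0 \ar[r]^-{\Delta} & QS^0 \times QS^0 \ar[r] & B\mathbb{A}^{\pm}_2 \ar[u] \ar[r]^-{\simeq} & B\mathcal{C}^+_{1,\partial} \ar[u]
}
\end{displaymath}
in which the right column is the inclusion $B\mathcal{C}^+_{1,\partial} \hookrightarrow B\mathcal{C}^+_1$, and the middle-right vertical map $B\mathbb{A}^{\pm}_2 \to \Omega^{\infty-1} MT(1)^+$ is defined to make the diagram commute. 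Since the first two columns are equivalences and both rows are homotopy fiber sequences, a five-lemma argument applied to the long exact sequences of homotopy groups forces the middle-right map, and therefore the composite right vertical map, to be a weak homotopy equivalence. Because the identifications of the bottom row come from symmetric monoidal constructions and the top row from infinite loop structures on Thom spectra, the resulting equivalence is one of infinite loop spaces.

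The main obstacle will be verifying that the diagram actually commutes up to homotopy, i.e.\ checking that the two realizations of the diagonal $\Delta : QS^0 \to QS^0 \times QS^0$ match up after the identifications. On the $\mathbb{A}^{\pm}_2$ side this map records the inclusion of endomorphism monoids $\Sigma_n \hookrightarrow \Sigma_n \times \Sigma_n$ coming from the unique ``cap'' morphism $(1,1) \to (0,0)$, while on the Thom spectrum side it comes from collapsing the $\pm$ ends of an oriented $0$-manifold separately. Both are induced by sending an oriented configuration of points to the pair of configurations given by its positive and negative parts, which can be checked directly on the level of the representing maps. Once this naturality is in place, the rest of the argument is purely formal.
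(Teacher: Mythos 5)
Your proposal is correct and follows essentially the same route as the paper, which proves this corollary by invoking Theorem \ref{oriented version} and repeating the diagram comparison used for Corollary \ref{1 positive boundary}: matching the two homotopy fiber sequences over the diagonal $QS^0 \to QS^0 \times QS^0$ and concluding by the five lemma. Your identification of the compatibility of the two diagonal maps as the point requiring verification is exactly where the substance of the argument lies.
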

\begin{proof}
This follows from Theorem \ref{oriented version} similarly with Corollary \ref{1 positive boundary}.
\end{proof}

\section{The Cobordism Categories $\mathcal{A}_{g,d}$} \label{cob5}

\subsection{Definition of $\mathcal{A}_{g,d}$} \label{cob5.1} Fix integers $d \geq 1$ and $g \geq 0$. Following the construction of \cite{Ti2}, the cobordism category $\mathcal{A}_{g,d}$ is a model for a subcategory $\mathcal{M}_{g,d}$ of the cobordism category of Riemann surfaces $\mathcal{M}$ \cite{Se2}. It has the same objects as $\mathcal{M}$ and a morphism from $m$ to $k$ is a Riemann surface with $m$ incoming and $k$ outgoing parametrised boundary circles such that each of its connected components is
\begin{itemize}
\item[(A)] either a complex annulus with one incoming and one outgoing boundary component that represents a morphism $1 \to 1$,
\item[(B)] or a Riemann surface of genus $g$ with $d$ incoming parametrised boundary components that represents a morphism $d \to 0$.
\end{itemize}
Composition of morphisms is defined by gluing Riemann surfaces along their boundary components using the parametrisations. The space of objects is discrete and the topology on the morphism sets is induced from the associated moduli spaces of Riemann surfaces.  

Let $\tilde{\mathcal{A}}_{g,d}$ be the $2$-category that enriches $\mathbb{A}_d$ with $2$-morphisms given by isotopy classes of orientation-preserving diffeomorphisms of surfaces with type (A) or (B). In detail, the objects of $\tilde{\mathcal{A}}_{g,d}$ are the same as in $\mathbb{A}_d$, i.e. the non-negative integers,  the $1$-morphisms are given by orientable smooth $2$-dimensional cobordisms each of whose components is diffeomorphic to (A) or (B) as above, and the $2$-morphisms are given by isotopy classes of orientation-preserving diffeomorphisms of surfaces that fix the boundary pointwise. The disjoint union of cobordisms (and diffeomorphisms) defines a symmetric monoidal pairing on the $2$-category $\tilde{\mathcal{A}}_{g,d}$. (For a more detailed discussion of the construction of such a $2$-category, see \cite{Ti2}.). 

We recall some standard notation for the mapping class groups. Let $\Sigma^q_{g,p}$ denote an orientable surface of genus $g$ with $p$ boundary components and $q$ marked points. Let $\Diff^+(\Sigma^q_{g,p}, \partial, *)$ be the topological group of orientation-preserving diffeomorphisms that fix the boundary and the marked points pointwise. The mapping class group is the group of connected components $\Gamma^q_{g,p}=\pi_0(\Diff^+(\Sigma^q_{g,p}, \partial, *))$.  By the fundamental results of Earle-Eells\cite{EE} and Earle-Schatz \cite{ES}, the projection $\Diff^+(\Sigma^q_{g,p}, \partial, *) \to \Gamma^q_{g,p}$ is a homotopy equivalence if $3g+2q+5p \geq 5$. Let $\Gamma^{(q)}_{g, (p)}$ denote the group of connected components of the topological group $\Diff^+(\Sigma^q_{g,p}, \{\partial\}, \{\ast \})$ of orientation-preserving diffeomorphisms that are allowed to permute the boundary components and the marked points. There is a short exact sequence of groups
\begin{displaymath}
1 \to \Gamma^q_{g,p} \to \Gamma^{(q)}_{g,(p)} \to \Sigma_q \times \Sigma_p \to 1.
\end{displaymath}

The cobordism category $\mathcal{A}_{g,d}$ is the topological category that is obtained from $\tilde{\mathcal{A}}_{g,d}$ by applying the classifying space functor on the morphism categories. Note that the category of the connected components of $\mathcal{A}_{g,d}$ is precisely the category $\mathbb{A}_d$. Disjoint union of surfaces defines a symmetric monoidal structure on $\mathcal{A}_{g,d}$. Since $\pi_0(B \mathcal{A}_{g,d}) \cong \Z_d$ is a group, it follows that $B\mathcal{A}_{g,d}$ is an infinite loop space, see \cite{Se3}, \cite{May2}. 

\textit{Notation.} We will write $\overline{\mathcal{A}}_{g,d}$ to denote the connected component of $\mathcal{A}_{g,d}$ that contains the object $0$ and $\mathbf{n}$ for the object $n \cdot d$. \\

The special case of the topological category $\mathcal{M}_{0,2}$ and its category of components $\mathbb{A}_2$ was studied by Costello in (an earlier version of)\cite{Cos1}, \cite{Cos2}\footnote{Costello denotes $\mathcal{M}_{0,2}$ by $\emph{A}$ and $\mathbb{A}_2$ by $\emph{B}$.} in connection with the homotopy type of the moduli spaces of stable Riemann surfaces. The topology on $\mathcal{M}_{0,2}$ is determined by the moduli space of parametrised complex annuli. This space is homeomorphic to $(0,1) \times (\Diff^+(S^1) \times \Diff^+(S^1))/S^1 \simeq S^1$, where $(0,1)$ comes from the ratio of the radii of the annulus and $\Diff^+(S^1) \times \Diff^+(S^1)/S^1$ comes from the choice of parametrisations up to rotational symmetry. $\mathcal{M}_{0,2}$ acts on the moduli spaces of Riemann surfaces by adding annuli either (A) to extend a boundary component, or (B) to join  pairs of boundary components. This action defines a functor $\mathbb{M}:\mathcal{M}_{0,2} \rightarrow \mathcal{T}op$ that takes the object $n$ to the moduli space of possibly disconnected Riemann surfaces with $n$ parametrised outgoing boundary components subject to the stability condition that no component can be a sphere with $\leq 2$ boundaries or a torus with no boundaries. On morphisms, the functor is defined by gluing Riemann surfaces. 

On the other hand, the category $\mathbb{A}_2$ acts on the moduli spaces of stable Riemann surfaces as follows. There is a functor $\overline{\mathbb{M}}: \mathbb{A}_2 \rightarrow \mathcal{T}op$ that takes the object $n$ to the moduli space of stable, possibly disconnected, Riemann surfaces with $n$ marked smooth points. On morphisms, $\overline{\mathbb{M}}$ is defined by gluing together pairs of marked points. The operation of gluing pairs of marked points adds new nodes to the stable Riemann surface. Costello asks how to ``approximate'' the functor of the compactified moduli spaces $\overline{\mathbb{M}}$ by the functor $\mathbb{M}$ along the projection $\pi_0: \mathcal{M}_{0,2} \to \mathbb{A}_2$. There is a pull-back functor $\pi_{0}^*: \mathrm{Fun}(\mathbb{A}_2, \mathcal{T}op) \rightarrow \mathrm{Fun}(\mathcal{M}_{0,2}, \mathcal{T}op)$ which has a homotopy left adjoint $L\pi_{0*}$. Costello showed that the left homotopy Kan extension of $\mathbb{M}$ along $\pi_0$ is homotopy equivalent to $\overline{\mathbb{M}}$ as functors. Recall that the (rational) homotopy type of the moduli spaces of Riemann surfaces is given by the homotopy of the corresponding mapping class groups and that the mapping class groups are generated by Dehn twists at embedded annuli. A left (homotopy) Kan extension is a best approximation from the left, so its definition involves a (homotopy) colimit operation. Then, roughly speaking, Costello's claim can be based on the observation that the homotopical effect of the action of the complex annuli of $\mathcal{M}_{0,2}$ on a boundary component of a Riemann surface is the same as that of closing the boundary by adding a disc and thus creating a smooth marked point, while the homotopical effect of the action of the complex annuli on a pair of boundary components is that of killing the contribution of the Dehn twist at the annulus, which is  the same as collapsing the embedded circle in the middle of the annulus and thus creating a node. This result is stated in an earlier version of \cite{Cos1} that appeared on the arXiv, but it seems that a proof of it as stated is not available in print at the moment. A different version of this result by Costello can be found in \cite{Cos2}.    

\subsection{The homotopy fiber sequences} \label{cob5.2} The $2$-categorical structure of $\tilde{\mathcal{A}}_{g,d}$ is determined by the groupoids $\tilde{\mathcal{A}}_{g,d}(1,1)$ and $\tilde{\mathcal{A}}_{g,d}(d,0)$.  The groupoid $\tilde{\mathcal{A}}_{g,d}(1,1)$ is equivalent with the mapping class group $\Gamma_{0,1+1} \cong \Z$ as a category with one object.  $\tilde{\mathcal{A}}_{g,d}(d,0)$ is a connected groupoid which is equivalent with the mapping class group $\Gamma_{g,d}$. The set of objects of $\tilde{\mathcal{A}}_{g,d}(d,0)$ is identified with the set of permutations $\Sigma_d$, i.e., every object corresponds to a different labelling of the circles of some fixed reference surface $\Sigma_{g,d}$. In order to determine the homotopy type of the little comma categories of $\mathcal{A}_{g,d}$, we first need to look at the functor
\begin{displaymath}
\phi: \tilde{\mathcal{A}}_{g,d}(d,d) \times \tilde{\mathcal{A}}_{g,d}(d,0) \to \tilde{\mathcal{A}}_{g,d}(d,0)
\end{displaymath}
that gives the composition in $\tilde{\mathcal{A}}_{g,d}$. 

The objects of $\tilde{\mathcal{A}}_{g,d}(d,d)$ correspond to the different labellings of the boundary circles of the $d$ cylinders, so they are also identified with the set of permutations $\Sigma_d$. The group of automorphisms of any object is isomorphic with the group $\Z ^d$. The composition functor $\tilde{\mathcal{A}}_{g,d}(d,d) \times \tilde{\mathcal{A}}_{g,d}(d,d) \to \tilde{\mathcal{A}}_{g,d}(d,d)$ makes $\tilde{\mathcal{A}}_{g,d}(d,d)$ into a monoidal category. On objects, the monoidal pairing is given by the group multiplication of $\Sigma_d$ and on the morphisms by the group multiplication of $\Sigma_d \wr \Z$. The restriction of the composition map $\phi$ at an object defines an action $\Z ^d \times \Gamma_{g,d} \to \Gamma_{g,d}$. The classifying space $B \tilde{\mathcal{A}}_{g,d}(d,d)$ is the topological group $\Sigma_d \wr B \Z \simeq \Sigma_d \wr SO(2)$. \

\begin{lemma} \label{ses} (a) There is a short exact sequence
\begin{displaymath}
1 \to \Gamma^d_g \to \Gamma^{(d)}_g \to \Sigma_d \to 1,
\end{displaymath}
(b)  there is a central group extension 
\begin{displaymath}
1 \to \Z ^d \to \Gamma_{g,d} \to \Gamma^d_g \to 1.
\end{displaymath}
if $g > 0$ or $d > 2$, and (c) there are homotopy equivalences $\Diff^+(\Sigma^{(2)}_0, \{*\}) \simeq O(2)$ and $\Diff^+(\Sigma^{1}_0, *) \simeq SO(2)$.  
\end{lemma}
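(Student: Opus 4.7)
The plan is to prove the three parts separately, each being a relatively standard computation once the right fibration is in place.

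For part $(a)$, I would define $\pi \colon \Gamma^{(d)}_g \to \Sigma_d$ by sending a mapping class to the induced permutation of the $d$ marked points. Its kernel is tautologically $\Gamma^d_g$, and surjectivity follows from the fact that any permutation of $d$ points on a closed connected orientable surface is realised by an ambient orientation-preserving isotopy supported in a disc containing all the points.

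For part $(b)$, I would use the capping construction: filling each boundary circle of $\Sigma_{g,d}$ with a disc carrying a new marked point at its centre identifies the resulting closed surface with $\Sigma^d_g$. Extension by the identity on these glued discs yields a homomorphism $\Diff^+(\Sigma_{g,d}, \partial) \to \Diff^+(\Sigma^d_g, \ast)$. The key tool is the evaluation fibration
\begin{equation*}
\Diff^+(\Sigma_{g,d}, \partial) \to \Diff^+(\Sigma^d_g, \ast) \xrightarrow{\mathrm{ev}} \prod_{i=1}^{d} \mathrm{Iso}^+(\R^2, T_{\ast_i}\Sigma^d_g)
\end{equation*}
given by the derivative at each marked point, whose base is homotopy equivalent to $(S^1)^d$. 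The long exact sequence on homotopy groups produces
\begin{equation*}
\Z^d \to \Gamma_{g,d} \to \Gamma^d_g \to 1,
\end{equation*}
with the connecting map sending the $i$-th generator to the Dehn twist along a curve parallel to the $i$-th boundary component. Under the hypothesis $g>0$ or $d>2$, these $d$ boundary Dehn twists are independent in $\Gamma_{g,d}$ (a classical fact; the excluded cases fail because of the extra finite-order symmetries of $S^2$ with few punctures), so the left arrow is injective. Centrality is automatic because boundary Dehn twists can be chosen with supports in pairwise disjoint collar neighbourhoods of the boundary, so they commute up to isotopy with every element of $\Diff^+(\Sigma_{g,d}, \partial)$.

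For part $(c)$, I would invoke Smale's theorem $\Diff^+(S^2) \simeq SO(3)$ and compute the two groups as stabilisers of point sets under the $SO(3)$-action. The stabiliser of a single point is $SO(2)$, giving $\Diff^+(\Sigma^1_0, \ast) \simeq SO(2)$. For the other equivalence, place the marked points at antipodes; the subgroup of $SO(3)$ preserving the unordered pair $\{N,S\}$ consists of rotations about the $NS$-axis together with the $\pi$-rotations about equatorial axes, which is precisely $O(2)$, giving $\Diff^+(\Sigma^{(2)}_0, \{\ast\}) \simeq O(2)$. The main obstacle I expect is the injectivity of $\Z^d \hookrightarrow \Gamma_{g,d}$ in part $(b)$: one must verify, under the hypothesis $g>0$ or $d>2$, that the $d$ boundary Dehn twists satisfy no non-trivial relation in $\Gamma_{g,d}$, which is classical but requires carefully ruling out the low-complexity exceptions excluded by the hypothesis.
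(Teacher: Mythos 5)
Your proposal follows the same overall route as the paper: part (a) is immediate, part (b) rests on the evaluation fibration $\Diff^+(\Sigma_{g,d},\partial) \to \Diff^+(\Sigma^d_g,\ast) \to SO(2)^d$ given by the differential at the marked points, and part (c) reduces to Smale's theorem $\Diff^+(S^2)\simeq SO(3)$ together with a stabiliser computation. The one genuine divergence is the injectivity of $\Z^d \to \Gamma_{g,d}$: the paper reads it off the long exact sequence using the Earle--Eells/Earle--Schatz theorem that the components of $\Diff^+(\Sigma^d_g,\ast)$ are contractible when $g>0$ or $d>2$ (so $\pi_1$ of the middle term vanishes and the connecting homomorphism is injective by exactness), whereas you import the classical fact that the $d$ boundary twists generate a free abelian subgroup of rank $d$ in $\Gamma_{g,d}$. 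Both are legitimate, and yours keeps that step at the level of mapping class groups; just be aware that the standard proofs of the ``classical fact'' typically run through the very same fibration plus the contractibility theorem, so you are not really economising on input, only relocating it.

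Two small points to tighten. First, your diagnosis of the excluded cases is off: what fails for $(g,d)=(0,1),(0,2)$ is not the presence of finite-order symmetries but the non-vanishing of $\pi_1$ of the middle term of the fibration --- by part (c) the groups $\Diff^+(\Sigma^1_0,\ast)$ and $\Diff^+(\Sigma^2_0,\ast)$ have the homotopy type of a circle, and the resulting $\Z$ in $\pi_1$ maps onto the kernel of $\Z^d\to\Gamma_{0,d}$ (concretely, for the annulus the two boundary twists coincide). Second, centrality does not follow merely from the boundary twists having supports in disjoint collars, since an arbitrary $f\in\Diff^+(\Sigma_{g,d},\partial)$ need not preserve those collars; the correct one-line argument is $fT_cf^{-1}=T_{f(c)}$ combined with the observation that $f$ fixes the isotopy class of each boundary-parallel curve (or, as in the paper, that $\Gamma_{g,d}$ is generated by twists about curves disjoint from the boundary-parallel ones).
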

\begin{proof}
(a) is clear. (b) Gluing discs along the boudary components of $\Sigma_{g,d}$ and extending diffeomorphisms by the identity defines a map $$\Diff^+(\Sigma_{g,d}, \partial) \to \Diff^+(\Sigma^d_g, \ast).$$ The evaluation of the differential of a diffeomorphism $f \in \Diff^+(\Sigma^d_g, \ast)$ at the $d$ marked points gives a map
$\Diff^+(\Sigma^d_g,\ast) \to SO(2)^d$, and there is a homotopy fibration  
\begin{displaymath}
\Diff^+(\Sigma_{g,d}, \partial) \to \Diff^+(\Sigma^d_g, \ast) \to SO(2)^d.
\end{displaymath}
The required short exact sequence comes from the long exact sequence of homotopy groups since $\Diff^+(\Sigma^d_g,\ast)$ and $\Diff^+(\Sigma_{g,d}, \partial)$ have contractible components when $g > 0$ and $d \geq 1$ or $d > 2$ \cite{EE}, \cite{ES}. The homomorphism $\Z^d \to \Gamma_{g,d}$ is the fiber transport of the homotopy fibration and it maps into the Dehn twists near the boundary components. The short exact sequence is central because Dehn twists around simple closed curves $\alpha, \beta$ commute if the intersection number $i(\alpha, \beta)$ is zero, so the Dehn twists at the boundary components are central relative to a generating set of Dehn twists for the mapping class group $\Gamma_{g,d}$. (c) follows from the well-known homotopy equivalence $SO(3) \stackrel{\simeq}{\hookrightarrow} \Diff^+(S^2)$, see \cite{Sm}.
\end{proof}

\begin{remark} (genus $0$) \label{genus 0} The mapping class group $\Gamma_{0,d}$ is isomorphic with the pure ribbon braid group $PR\beta_{d-1}$. There is an isomorphism $PR\beta_{d-1} \cong P\beta_{d-1} \times \Z^{d-1}$, where $P\beta_{d-1}$ denotes the pure braid group. The factor $\Z^{d-1}$ corresponds to the Dehn twists around $d-1$ circles. The center of the pure braid group $P\beta_{d-1}$ is isomorphic with $\Z$ generated by the braid that corresponds to the Dehn twist around a closed curve homotopic to the boundary of the punctured disc (see \cite{Bir}).
\end{remark}

\begin{lemma} \label{little-comma cat} (a) Let $g>0$ or $d > 2$. There is a homotopy equivalence  
\begin{displaymath}
B(\mathbf{k} \downarrow_{\overline{\mathcal{A}}_{g,d}} \mathbf{k'}) \simeq B(\Sigma_{k-k'} \wr \Gamma^{(d)}_g).
\end{displaymath}

(b) There is a homotopy equivalence $B(\mathbf{k} \downarrow_{\overline{\mathcal{A}}_{0,2}} \mathbf{k'}) \simeq B(\Sigma_{k-k'} \wr O(2))$.

(c) There is a homotopy equivalence $B(\mathbf{k} \downarrow_{\overline{\mathcal{A}}_{0,1}} \mathbf{k'}) \simeq B(\Sigma_{k-k'} \wr SO(2))$.
\end{lemma}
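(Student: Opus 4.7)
The plan is to realize $\mathbf{k} \downarrow_{\overline{\mathcal{A}}_{g,d}} \mathbf{k'}$ as the transport category of the pre-composition action of $\End_{\overline{\mathcal{A}}_{g,d}}(\mathbf{k})$ on the morphism space $\overline{\mathcal{A}}_{g,d}(\mathbf{k},\mathbf{k'})$, and then to compute the resulting homotopy quotient in stages using the group extensions supplied by Lemma \ref{ses}. First, I will unpack the morphism spaces via the $2$-category $\tilde{\mathcal{A}}_{g,d}$: up to $2$-isomorphism a $1$-morphism $\mathbf{k}\to\mathbf{k'}$ is specified by a partition of the $kd$ incoming circles into $k'd$ cylinder-circles (each paired with a unique outgoing circle) together with $k-k'$ unordered groups of $d$ type-(B) circles, and the $2$-automorphism group of each such object is $\Z^{k'd}\times\Gamma_{g,d}^{k-k'}$. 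Hence $\overline{\mathcal{A}}_{g,d}(\mathbf{k},\mathbf{k'})$ splits, up to homotopy, as a disjoint union indexed by $\Sigma_{kd}/(\Sigma_{k-k'}\wr\Sigma_d)$ of copies of $B(\Z^{k'd}\times\Gamma_{g,d}^{k-k'})$; in parallel, $\End_{\overline{\mathcal{A}}_{g,d}}(\mathbf{k})\simeq\Sigma_{kd}\wr SO(2)$ acts by pre-composition, the $\Sigma_{kd}$ factor permuting incoming circles transitively (with stabiliser $\Sigma_{k-k'}\wr\Sigma_d$) and the $SO(2)^{kd}$ factor performing Dehn twists at the $kd$ incoming boundary circles.

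The core of the proof is the computation of this homotopy quotient. After reducing to a single $\Sigma_{kd}$-orbit of components, the problem becomes
$$B(\Z^{k'd}\times\Gamma_{g,d}^{k-k'})/\!/\bigl((\Sigma_{k-k'}\wr\Sigma_d)\ltimes SO(2)^{kd}\bigr),$$
and since the stabiliser acts trivially on the cylinder coordinates, this factors as a product of the contractible cylinder quotient $B\Z^{k'd}/\!/SO(2)^{k'd}$ and the type-(B) quotient $(B\Gamma_{g,d})^{k-k'}/\!/\bigl(\Sigma_{k-k'}\wr(\Sigma_d\ltimes SO(2)^d)\bigr)$. For the second factor, Lemma \ref{ses}(b) identifies the $SO(2)^d$-action on $B\Gamma_{g,d}$ with the action through the central subgroup $\Z^d$, so $B\Gamma_{g,d}/\!/SO(2)^d\simeq B\Gamma^d_g$; the subsequent $\Sigma_d$-quotient, using the extension $1\to\Gamma^d_g\to\Gamma^{(d)}_g\to\Sigma_d\to 1$ of Lemma \ref{ses}(a), then yields $B\Gamma^{(d)}_g$; and the outer $\Sigma_{k-k'}$-quotient introduces the wreath product, producing $B(\Sigma_{k-k'}\wr\Gamma^{(d)}_g)$ and proving (a).

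Parts (b) and (c) will be handled by the same procedure, except that Lemma \ref{ses}(b) no longer applies and must be replaced by Lemma \ref{ses}(c): the composite quotient by the Dehn-twist $SO(2)^d$-action and the marked-point permutation $\Sigma_d$-action is then computed directly from $B\Diff^+(\Sigma^{(d)}_g,\{*\})$, which Lemma \ref{ses}(c) identifies with $BO(2)$ in case (b) and with $BSO(2)$ in case (c). The main technical hurdle throughout is the identification of the $SO(2)^d$-action by boundary Dehn twists on $B\Gamma_{g,d}$ with the principal $B\Z^d$-bundle $B\Gamma_{g,d}\to B\Gamma^d_g$ arising from the central extension (or, in the degenerate ranges, with the corresponding diffeomorphism-group fibration underlying Lemma \ref{ses}); once this is established, the remaining manipulations are formal properties of classifying-space fibrations and Borel constructions.
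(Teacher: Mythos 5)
Your proposal is correct and, for part (a), follows the paper's proof essentially step for step: identify $B(\mathbf{k} \downarrow_{\overline{\mathcal{A}}_{g,d}} \mathbf{k'})$ with the homotopy quotient of the morphism space by $\End(\mathbf{k})\cong\Sigma_{kd}\wr B\Z$, reduce to a single component whose stabiliser in $\Sigma_{kd}$ is $\Sigma_{k-k'}\wr\Sigma_d$, and then quotient in stages by $SO(2)^{(k-k')d}$ (via Lemma \ref{ses}(b)), by $\Sigma_d^{k-k'}$ (via Lemma \ref{ses}(a)), and finally by $\Sigma_{k-k'}$. The only real divergence is in parts (b) and (c): the paper does not route through $B\Diff^+(\Sigma^{(2)}_0,\{\ast\})$ but instead observes that the action of $\End(\mathbf{k})$ on $\overline{\mathcal{A}}_{0,2}(\mathbf{k},\mathbf{k'})$ is now transitive and computes the stabiliser explicitly at the level of mapping class groups --- the antidiagonal copy of $\Z$ inside $\Z\times\Z$, extended by the coordinate swap, giving $\Sigma_2\ltimes SO(2)=O(2)$. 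Your alternative via Lemma \ref{ses}(c) is legitimate (indeed it is the point of view the paper itself adopts after Theorem \ref{mainB} when restating everything in terms of diffeomorphism groups), but note that it leans on exactly the identification you flag as the main technical hurdle, namely that the cylinder-gluing action of $SO(2)^{d}$ on $B\Gamma_{0,d}$ agrees with the boundary-evaluation fibration $\Diff^+(\Sigma_{0,d},\partial)\to\Diff^+(\Sigma^d_0,\ast)\to SO(2)^d$; the paper's stabiliser computation sidesteps this in the degenerate cases by working entirely inside the $2$-category. Either way the conclusion is the same, and your treatment of the non-degenerate case rests on the same identification, so nothing essential is missing.
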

\begin{proof}
(a) As was pointed out in section \ref{cob2.1}, the space $B(\mathbf{k} \downarrow_{\overline{\mathcal{A}}_{g,d}} \mathbf{k'})$ is homotopy equivalent with the homotopy quotient of $\overline{\mathcal{A}}_{g,d}(\mathbf{k},\mathbf{k'})$ by the right action of $\overline{\mathcal{A}}_{g,d}(\mathbf{k},\mathbf{k}) \cong \Sigma_{kd} \wr B \Z$. The action of $(B \Z)^{kd}$ restricts to an action on each component of $\overline{\mathcal{A}}_{g,d}(\mathbf{k}, \mathbf{k'})$, and by Lemma \ref{ses}(b), its homotopy quotient is $(B\Gamma^d_g)^{k-k'}$. The components of $\overline{\mathcal{A}}_{g,d}(\mathbf{k}, \mathbf{k'})$ correspond  to the cosets of $\Sigma_{k-k'} \wr \Sigma_d$ in $\Sigma_{kd}$, and the action of $\Sigma_{kd}$ on the components is the canonical transitive action. Therefore the required homotopy quotient is equivalent with the homotopy quotient of $(B\Gamma^d_g)^{k-k'}$ by $\Sigma_{k-k'} \wr \Sigma_d$, where the action of $\Sigma_d$ on each copy of $B\Gamma^d_g$ is induced by the covering action of $\Sigma_d$ on $\overline{\mathcal{A}}_{g,d}(\mathbf{1},\mathbf{0})$. By Lemma \ref{ses}(a), the homotopy quotient of $(B\Gamma^d_g)^{k-k'}$ by $\Sigma_d^{k-k'}$ is $(B\Gamma^{(d)}_g)^{k-k'}$. Finally, the homotopy quotient of $(B\Gamma^{(d)}_g)^{k-k'}$ by $\Sigma_{k-k'}$ is $E \Sigma_{k-k'} \times_{\Sigma_{k-k'}} (B \Gamma^{(d)}_g)^{k-k'}$, and hence the required result.  

(b) Although the case $(g,d)=(0,2)$ is somewhat special (see Remark \ref{special case} below), the arguments here are analogous. In this case, the action of $\overline{\mathcal{A}}_{0,2}(\mathbf{k}, \mathbf{k})$ on $\overline{\mathcal{A}}_{0,2}(\mathbf{k}, \mathbf{k'})$ is transitive, so the homotopy quotient is equivalent with the classifying space of the stabiliser group of any element. Let $\alpha: \mathbf{k} \to \mathbf{k'}$ be an object in $\mathbf{k} \downarrow_{\overline{\mathcal{A}}_{0,2}} \mathbf{k'}$. The automorphism group of this object is exactly the stabiliser subgroup of the element $\alpha$ with respect to the action of $\overline{\mathcal{A}}_{0,2}(\mathbf{k},\mathbf{k})$. Firstly, we determine this stabiliser subgroup at the $2$-categorical level when $\alpha$ is a single annulus $\mathbf{1} \to \mathbf{0}$. The action of the component of the identity of $\overline{\mathcal{A}}_{0,2}(\mathbf{1},\mathbf{1})$ on $\overline{\mathcal{A}}_{0,2}(\mathbf{1},\mathbf{0})$ can be written at the level of the mapping class groups as follows:
\begin{displaymath}
(\Z \times \Z) \times \Z \to \Z 
\end{displaymath}
\begin{displaymath}
(t_1, t_2), m \mapsto t_1 + m + t_2.
\end{displaymath}
The stabiliser subgroup of this (restricted) action at $m=0$ is the subgroup $\{(n, -n) \in \Z \times \Z: n \in \Z \} \unlhd \Z \times \Z$, so it is isomorphic with $\Z$. The group $\Sigma_2$ of components of $\overline{\mathcal{A}}_{0,2}(\mathbf{1},\mathbf{1})$ permutes the (integral) coordinates, so the stabiliser of the full action is isomorphic with $\Sigma_2 \ltimes B \Z$ as a subgroup of $\End_{\overline{\mathcal{A}}_{0,2}}(\mathbf{1},\mathbf{1})$, and where the semi-direct product is defined with respect to the action induced by $n \mapsto -n$. Similarly, the stabiliser subgroups of the action of $End_{\overline{\mathcal{A}}_{0,2}}(\mathbf{k}, \mathbf{k})$ on $\overline{\mathcal{A}}_{0,2}(\mathbf{k}, \mathbf{k'})$ can be identified with $\Sigma_{k-k'} \wr (\Sigma_2 \ltimes B \Z)$. There is a homotopy equivalence $B \Z \simeq SO(2)$ such that the action $n \mapsto -n$ corresponds to the conjugation by the reflection at the $x-$axis. Finally, note that $O(2) = \Sigma_2 \ltimes SO(2)$ as the only non-trivial extension of $SO(2)$ by $\Sigma_2$. (c) is similar. 
\end{proof}

\begin{remark} ($(g,d)=(0,2)$) \label{special case} The cases $d=1, 2$ and $d > 2$ differ because the components of $\Diff^+(\Sigma^d_0, *)$ are not contractible unless $d > 2$.
\end{remark}

Let $i_k: \End(\mathbf{k}) \rightarrow \End(\mathbf{k+1})$ be the homomorphism that corresponds to the canonical inclusions $\Sigma_{kd} \wr B \Z \rightarrow \Sigma_{(k+1)d} \wr B \Z$ for all $\mathbf{k} \in \Ob\overline{\mathcal{A}}_{g,d}$.  Let $S(g,d)$ be a basepoint of $\overline{\mathcal{A}}_{g,d}(\mathbf{1},\mathbf{0})$, and let $\beta_k: \mathbf{k+1} \rightarrow \mathbf{k}$ denote the morphism $1_{\mathbf{k}} \sqcup S(g,d)$ in $\overline{\mathcal{A}}_{g,d}$. Let $\mathcal{I}$ be the subcategory generated by the morphisms $\beta_k$, $k \geq 0$. Since $\sigma  \beta_k = \beta_k i_k(\sigma)$ for every $\sigma \in \End(\mathbf{k})$, it follows that $\overline{\mathcal{A}}_{g,d}$ stabilizes along $\mathcal{I}$, cf. Example \ref{example2.1}.

\begin{proposition} \label{stab2} $\overline{\mathcal{A}}_{g,d}$ is $H \Z$-stable along $\mathcal{I}$ for all $d \geq 1$ and $g \geq 0$.
\end{proposition}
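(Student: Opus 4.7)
The plan is to follow the argument of Proposition \ref{stab} essentially verbatim, using the identifications of the little comma categories provided by Lemma \ref{little-comma cat}. I first reduce to checking the $H\Z$-stability condition on generators of the form $u = 1_{\mathbf{k}} \sqcup S : \mathbf{k+1} \to \mathbf{k}$, where $S \in \overline{\mathcal{A}}_{g,d}(\mathbf{1}, \mathbf{0})$. Any morphism in $\overline{\mathcal{A}}_{g,d}$ factors (up to components) as a composition of such elementary morphisms together with automorphisms; the latter induce equivalences of comma categories, and hence homology equivalences on the homotopy colimits, so the reduction is sound.

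Given such a $u$, consider the commutative square
\begin{displaymath}
\xymatrix{
B(\mathbf{n} \downarrow_{\overline{\mathcal{A}}_{g,d}} \mathbf{k+1}) \ar[d]_{u_*} \ar[r]^{\beta_n^*} & B(\mathbf{n+1} \downarrow_{\overline{\mathcal{A}}_{g,d}} \mathbf{k+1}) \ar[d]^{u_*} \\
B(\mathbf{n} \downarrow_{\overline{\mathcal{A}}_{g,d}} \mathbf{k}) \ar[r]^{\beta_n^*} & B(\mathbf{n+1} \downarrow_{\overline{\mathcal{A}}_{g,d}} \mathbf{k})
}
\end{displaymath}
obtained by post-composition with $u$ (vertical maps) and pre-composition with $\beta_n$ (horizontal maps). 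By Lemma \ref{little-comma cat}, this square is identified up to homotopy equivalence with
\begin{displaymath}
\xymatrix{
B(\Sigma_{n-k-1} \wr G) \ar[d] \ar[r] & B(\Sigma_{n-k} \wr G) \ar[d] \\
B(\Sigma_{n-k} \wr G) \ar[r] & B(\Sigma_{n-k+1} \wr G)
}
\end{displaymath}
where $G$ is one of $\Gamma^{(d)}_g$, $O(2)$, or $SO(2)$, according to whether $g > 0$ or $d > 2$, $(g,d) = (0,2)$, or $(g,d) = (0,1)$.

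The key observation is that the horizontal maps are the canonical inclusions $\Sigma_m \wr G \to \Sigma_{m+1} \wr G$ induced by the standard stabilization $\Sigma_m \to \Sigma_{m+1}$, while the vertical maps are inclusions of the same shape but are conjugated by inner automorphisms of $\Sigma_{m+1} \wr G$ that re-label the slots in the symmetric group factor. Since inner automorphisms act trivially on group homology and the choice of re-labelling cannot be made coherent in $n$, the vertical map in the colimit is nevertheless a homology equivalence: both $\h B(- \downarrow_{\overline{\mathcal{A}}_{g,d}} \mathbf{k+1})$ and $\h B(- \downarrow_{\overline{\mathcal{A}}_{g,d}} \mathbf{k})$ are homotopy equivalent to $B(\Sigma_{\infty} \wr G)$, and the induced map is the identity on integral homology. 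The main obstacle is really the verification, hidden in the proof of Lemma \ref{little-comma cat}, that the various identifications with wreath products are compatible with both the stabilization along $\mathcal{I}$ and the post-composition by $u$; once this bookkeeping is in place, the $H\Z$-stability argument is purely formal and identical in spirit to the one for $\overline{\mathbb{A}}_d$.
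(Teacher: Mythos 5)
Your proposal is correct and follows essentially the same route as the paper: reduce to morphisms $\mathbf{k+1}\to\mathbf{k}$, identify the stabilization square with wreath-product inclusions via Lemma \ref{little-comma cat}, and observe that the vertical maps differ from standard inclusions only by conjugation, which is invisible on homology after passing to the colimit. The one point the paper makes explicit that you leave inside your ``bookkeeping'' remark is that the identification of $u_*$ with a standard inclusion depends on a choice of basepoint $* \to B\Gamma_{g,d} \to B\Gamma^{(d)}_g$ determined by $u$, and is well-defined up to homotopy because $B\Gamma^{(d)}_g$ is path-connected.
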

\begin{proof}  The proof is similar to Proposition \ref{stab}. It suffices to show that for every morphism $u: \mathbf{k+1} \to \mathbf{k}$, the canonical map 
\begin{equation} \label{stab2.1}
 B( \mathbf{n} \downarrow_{\overline{\mathcal{A}}_{g,d}} \mathbf{k+1}) \stackrel{u_*}{\longrightarrow} B( \mathbf{n} \downarrow_{\overline{\mathcal{A}}_{g,d}} \mathbf{k})
\end{equation}
induces a homology equivalence between the colimits over $\mathcal{I}$ as $n \to \infty$. If $(g,d) \neq (0,1), (0,2)$,  then by Lemma  \ref{little-comma cat}(a), this map can be identified up to homotopy with an inclusion   
\begin{equation} \label{stab2.2}
E\Sigma_{n-(k+1)} \times_{\Sigma_{n-(k+1)}} (B\Gamma_g^{(d)})^{n-(k+1)} \to E\Sigma_{n-k} \times_{\Sigma_{n-k}} (B\Gamma_g^{(d)})^{n-k}
\end{equation}
that is determined by a choice of a base-point $* \to B\Gamma_{g,d} \to B\Gamma_g^{(d)}$. This choice of basepoint  depends on the morphism $u: \mathbf{k+1} \to \mathbf{k}$.  However, since $B\Gamma_g^{(d)}$ is path-connected, the homotopy class of the map is independent of this choice. The identification of \eqref{stab2.1} with \eqref{stab2.2} is natural in $\mathbf{n}$ only up to a ``re-labelling'', i.e., conjugation by an element of $\Sigma_{n-k}$. But this conjugation does not change the induced homomorphism on homology. Hence the induced map on homology between the colimit of \eqref{stab2.1} is the same as that of \eqref{stab2.2}, so it is a homology equivalence. The cases $(g,d)= (0,1),(0,2)$ are similar.
\end{proof} 

Combining the last proposition with the methods of Section \ref{cob2.2}, we can now prove the main result of this section. 

\begin{theorem} \label{mainB} There are homotopy fiber sequences of infinite loop spaces,
\begin{equation} \label{mainB1}
QBO(2)_+ \to QBSO(2)_+ \to B\mathcal{A}_{0,2} 
\end{equation}
and
\begin{equation} 
Q(B \Gamma^{(d)}_g)_+  \to QBSO(2)_+ \to B\mathcal{A}_{g,d}
\end{equation}
when $g > 0$ or $d >2$.
\end{theorem}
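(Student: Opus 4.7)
The plan is to mirror the strategy used for Theorem \ref{mainA}: apply the homology fibre sequence of Theorem \ref{ggct} to $\overline{\mathcal{A}}_{g,d}$ stabilising along $\mathcal{I}$, identify the pieces by means of Lemma \ref{little-comma cat} and the group completion theorem, and then upgrade to a homotopy fibre sequence via Proposition \ref{+}. Proposition \ref{stab2} supplies the hypothesis of Theorem \ref{ggct}, so one obtains a homology fibre sequence
\begin{displaymath}
B\mathcal{G}_{\mathcal{I}}(\mathbf{0}) \to B\bigl(\textstyle\int_{\mathcal{I}^{op}} \End_{\overline{\mathcal{A}}_{g,d}}(-)\bigr) \to B\overline{\mathcal{A}}_{g,d},
\end{displaymath}
where the left-hand term is, by Proposition \ref{thomason-grothendieck}, weakly equivalent to $\h B(-\downarrow_{\overline{\mathcal{A}}_{g,d}} \mathbf{0})$.

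Next I would identify the two spaces on the left. For the middle term, the endomorphism monoid of $\mathbf{k}$ is $\Sigma_{kd} \wr B\Z \simeq \Sigma_{kd}\wr SO(2)$, and the stabilisation along $\mathcal{I}$ is induced by the standard inclusions; hence the homotopy colimit is $B(\Sigma_\infty \wr SO(2))$, whose group completion (via McDuff--Segal) is $\Z \times B(\Sigma_\infty \wr SO(2))^+ \simeq QBSO(2)_+$. For the fibre, Lemma \ref{little-comma cat} identifies each level with $B(\Sigma_{n-k}\wr G)$ where $G = \Gamma_g^{(d)}$ in the generic case, $G = O(2)$ when $(g,d)=(0,2)$, and $G = SO(2)$ when $(g,d)=(0,1)$; the homotopy colimit over $\mathcal{I}^{op}$ is therefore $B(\Sigma_\infty \wr G)$, which group-completes to $QBG_+$. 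Under the homotopy equivalence $B\Diff^+(\Sigma_g^{(d)}) \simeq B\Gamma_g^{(d)}$ available under the stated range of $g,d$ (Earle--Eells, Earle--Schatz, and Smale), this matches the fibre claimed in the theorem, and under Lemma \ref{ses}(c) the case $(0,2)$ matches $QBO(2)_+$.

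To pass from homology to a homotopy fibre sequence I would invoke Proposition \ref{+}. This requires $P\pi_1(B\overline{\mathcal{A}}_{g,d}) = 1$ together with nilpotence of the plus constructions. The nilpotence is automatic because $B(\Sigma_\infty \wr G)^+$ and $B(\Sigma_\infty \wr SO(2))^+$ are components of infinite loop spaces. For the perfect-subgroup condition one computes $\pi_1$ from the $H\Z$-fibre sequence together with Proposition \ref{perf} and the analogous calculation of $\pi_1 B(\Sigma_\infty \wr G)^+$, exactly as in the argument following Proposition \ref{main1}: the cokernel on $\pi_1$ is abelian and indeed, once the corresponding long exact sequence is written down, one finds $\pi_1 B\overline{\mathcal{A}}_{g,d}$ is abelian so that $P\pi_1 = 1$. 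Having established the homotopy fibre sequence on the basepoint component, translating by the $\Z_d$ of components (as in the passage from Proposition \ref{main1} to Theorem \ref{mainA}) produces the full fibre sequences claimed. Finally, according to the second half of Theorem \ref{ggct}, the map from the fibre to the middle is induced by the forgetful functors $\mathbf{n} \downarrow_{\overline{\mathcal{A}}_{g,d}} \mathbf{0} \to \End(\mathbf{n})$; unpacking the identifications in Lemma \ref{little-comma cat}(a) via Lemma \ref{ses}(b), this forgetful functor is precisely the evaluation of the differential of a diffeomorphism at the $d$ marked points (coming from the homotopy fibration $\Diff^+(\Sigma_{g,d},\partial) \to \Diff^+(\Sigma_g^d,\ast) \to SO(2)^d$ of Lemma \ref{ses}(b)), which yields the description \eqref{hofi2} from the introduction.

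The main obstacles I anticipate are two. The first is the verification of the $\pi_1$-condition for Proposition \ref{+}; since $\Gamma_g^{(d)}$ is more complicated than the symmetric group, the computation requires a careful analysis of the commutator subgroup of $\Sigma_\infty \wr \Gamma_g^{(d)}$ in the spirit of Proposition \ref{perf}, together with the effect of the stable transfer-like map on $\pi_1$. The second is the precise identification of the map on the fibre: one must check that the various naturality choices for the base-point in $B\Gamma_g^{(d)}$ made during the $H\Z$-stability argument of Proposition \ref{stab2} reassemble, after group completion, into the single stable map induced by the evaluation of the differential at the $d$ marked points, with the special cases $(0,1)$ and $(0,2)$ handled by Lemma \ref{ses}(c).
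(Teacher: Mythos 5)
Your proposal follows the paper's own proof essentially step for step: Theorem \ref{ggct} together with Proposition \ref{stab2} yields the homology fibre sequence $B(\Sigma_{\infty}\wr G)\to B(\Sigma_{\infty}\wr SO(2))\to B\overline{\mathcal{A}}_{g,d}$ with $G=\Gamma_g^{(d)}$ (resp.\ $O(2)$ for $(g,d)=(0,2)$) via Lemma \ref{little-comma cat}, the group completion theorem identifies the plus constructions with components of $Q(BG)_+$ and $QBSO(2)_+$, and Proposition \ref{+} upgrades the sequence to a homotopy fibre sequence, after which one translates components. This is exactly the argument given in the paper; your additional identification of the fibre map as the evaluation of the differential is carried out there separately, in Section \ref{cob5.3}.

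One step of your sketch is logically circular, although its conclusion is correct. You propose to verify condition (a) of Proposition \ref{+}, namely $P\pi_1(B\overline{\mathcal{A}}_{g,d})=1$, by writing down ``the corresponding long exact sequence''; but the long exact sequence of homotopy groups only becomes available once the sequence is known to be a homotopy fibre sequence, which is precisely what Proposition \ref{+} is being invoked to establish. (The $\pi_1$-computation following Proposition \ref{main1} is likewise performed \emph{after} the homotopy fibre sequence has been obtained, not as an input to it.) The correct justification, which you in effect already have in hand when you note that the relevant spaces are infinite loop spaces, is that $\mathcal{A}_{g,d}$ is symmetric monoidal with $\pi_0 B\mathcal{A}_{g,d}\cong\Z_d$ a group, so $B\mathcal{A}_{g,d}$ is an infinite loop space and $\pi_1$ of each component is abelian; hence its maximal perfect subgroup is trivial. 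In particular no analysis of the commutator subgroup of $\Sigma_{\infty}\wr\Gamma_g^{(d)}$ in the spirit of Proposition \ref{perf} is required, and the paper carries out none.
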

\begin{proof} By Theorem \ref{ggct} and Proposition \ref{stab2}, there are homology fiber sequences 
\begin{align*}
B(\Sigma_{\infty} \wr O(2)) \to B(\Sigma_{\infty} \wr SO(2)) \to B \overline{\mathcal{A}}_{0,2}, \\
B(\Sigma_{\infty} \wr \Gamma^{(d)}_g ) \to B(\Sigma_{\infty} \wr SO(2)) \to B \overline{\mathcal{A}}_{g,d}.
\end{align*}
where the maps on the right are induced by the inclusion of the endomorphisms $$\Sigma_{nd} \wr B \Z = \End_{\overline{\mathcal{A}}_{g,d}}(\mathbf{n}) \to \overline{\mathcal{A}}_{g,d}$$ for all $n$. By infinite 
loop space theory (e.g. see \cite{May}, \cite{Se4}) and the group completion theorem \cite{McDS}, there are weak homotopy equivalences:
\begin{align*}
\Z \times B( \Sigma_{\infty} \wr O(2))^+ \stackrel{\simeq}{\to} QBO(2)_+ \\
\Z \times B(\Sigma_{\infty} \wr \Gamma_g^{(d)})^+ \stackrel{\simeq}{\to} Q(B\Gamma_g^{(d)})_+.
\end{align*}
The conditions of Proposition \ref{+} are satisfied, so the sequences of infinite loop maps, 
\begin{align*}
B(\Sigma_{\infty} \wr O(2))^+ \to B( \Sigma_{\infty} \wr SO(2))^+ \to B\overline{\mathcal{A}}_{0,2}, \\
B(\Sigma_{\infty} \wr \Gamma_g^{(d)})^+\to B( \Sigma_{\infty} \wr SO(2))^+ \to B\overline{\mathcal{A}}_{g,d}
\end{align*}
define homotopy fiber sequences. These homotopy fiber sequences are the restriction of the required homotopy fiber 
sequences at a single component.
\end{proof}

\begin{remark} ($(g,d)=(0,1)$) The same argument shows that there is also a homotopy fiber sequence $QBSO(2)_+ \stackrel{1}{\to} QBSO(2)_+ \to B \mathcal{A}_{0,1}.$ Indeed the category $\mathcal{A}_{0,1}$ has a terminal object $0$, so the classifying space $B \mathcal{A}_{0,1}$ is contractible.
\end{remark}

Due to the contractibility of the components of the relevant diffeomorphism groups, Theorem \ref{mainB} can be stated more uniformly as saying that there is a homotopy fiber sequence $$Q B \Diff^+(\Sigma^{(d)}_g)_+  \to QBSO(2)_+ \to  B\mathcal{A}_{g,d}$$ for all $g \geq 0$ and $d \geq 1$. In fact, the theorem is more accurately viewed as a statement about diffeomorphism groups. (This fact can be traced already in the proof of Lemma \ref{ses}.). The introduction of the mapping class group is merely a techncal point here that facilitates a concrete construction of the categories $\mathcal{A}_{g,d}$. 

Furthermore the projection functor $\pi_0: \mathcal{A}_{g,d} \to \mathbb{A}_d$ induces a map of homotopy fiber sequences,
\begin{displaymath}
\xymatrix{
Q B \Diff^+(\Sigma^{(d)}_g)_+  \ar[r] \ar[d] & QBSO(2)_+ \ar[r] \ar[d] & B\mathcal{A}_{g,d} \ar[d] \\
Q B \Sigma_d{}_+  \ar[r] & QS^0 \ar[r] & B\mathbb{A}_d
}
\end{displaymath}
where the vertical maps are induced by the maps $\Diff^+(\Sigma^{(d)}_g) \to \Sigma_d$ (cf. Lemma \ref{ses}) and $SO(2) \to \ast$ respectively.

\begin{remark} (the map $B\mathcal{A}_{g,d} \to B \mathcal{S}$) There is an inclusion functor $\mathcal{A}_{g,d} \to \mathcal{S}$ where $\mathcal{S}$ denotes Tillmann's model for the surface category \cite{Ti}, \cite{Ti2} with the opposite convention for the positive boundary condition. Tillmann \cite{Ti} proved that there is a weak homotopy equivalence $\Z \times B \Gamma_{\infty}^+ \stackrel{\simeq}{\to} \Omega B \mathcal{S} $ where $\Gamma_{\infty} : = \co_g \Gamma_{g,1}$ is the stable mapping class group that is obtained by gluing a torus with two boundary components and extending diffeomorphisms by the identity. Then it is easy to see that the induced map
\begin{displaymath}
 B \Gamma_{g,d} \simeq \mathcal{A}_{g,d}(d,0) \to \Omega B \mathcal{A}_{g,d} \to \Omega B \mathcal{S} \simeq \Z \times B \Gamma_{\infty}^+
\end{displaymath}
is up to homotopy given by the canonical map $B \Gamma_{g,d} \to\{g\} \times B \Gamma^+_{\infty, (d-1)+1} \simeq B \Gamma_{\infty}^+$. By Harer's stability theorem, the last map induces isomorphisms in integral homology
in degrees $\ast < \epsilon(g)$ that increases to $\infty$ as $g \to \infty$. 
\end{remark}

There is an interesting variant of the categories $\mathcal{A}_{g,d}$ that allows stabilisation with respect to the genus. For simplicity, let us assume that $d=1$. The cobordism category $\mathcal{A}_{g,1+1}$ is defined similarly with $\mathcal{A}_{g,1}$, but we make the following additional specifications:
\begin{itemize}
\item the surface $\Sigma_{g,d}$ is equipped with an embedded disc $D^2 \subseteq \Sigma_{g,d}$ that does not meet the boundary, and  
\item the orientation-preserving diffeomorphisms are required to fix the disc pointwise. 
\end{itemize}
Then the space $\mathcal{A}_{g,1+1}(1,0)$ is equivalent to $B \Gamma_{g,1+1}$. The same arguments that led up to Theorem \ref{mainB} apply also similarly to show that there is a homotopy fiber sequence $$Q (B \Gamma^{1}_{g,1})_+  \to QBSO(2)_+ \to  B\mathcal{A}_{g,1+1}$$ for all $g \geq 0$. The operation of cutting out the embedded disc and adding a torus with two boundary components defines a symmetric monoidal functor $\mathcal{A}_{g,1+1} \to \mathcal{A}_{g+1,1+1}$. This induces a map of homotopy fiber sequences,
\begin{displaymath}
\xymatrix{
Q (B\Gamma^{1}_{g,1})_+  \ar[r] \ar[d] & QBSO(2)_+ \ar[r] \ar[d] & B\mathcal{A}_{g,1+1} \ar[d] \\
Q (B\Gamma^{1}_{g+1,1})_+  \ar[r] & QBSO(2)_+ \ar[r] & B\mathcal{A}_{g+1,1+1}
}
\end{displaymath}
where the vertical map on the left is induced by the standard stabilisation map $\Gamma^{1}_{g,1} \to \Gamma^{1}_{g+1,1}$. Taking the colimit as $g \to \infty$, we obtain a homotopy fiber sequence 
\begin{equation} \label{1 marked point} 
 Q (B \Gamma^{1}_{\infty,1})_+  \to QBSO(2)_+ \to  B\mathcal{A}_{\infty,1}
\end{equation}
where the first map is induced by the evaluation of the differential of a diffeomorphism at the marked point. By \cite[Theorem 1.1]{BT}, this map splits after plus construction, i.e. there is a homotopy equivalence $$(B \Gamma^{1}_{\infty,1})^+ \simeq B\Gamma_{\infty,1}^+ \times BSO(2),$$ so the homotopy fiber sequence \eqref{1 marked point} splits also. 

\subsection{The homotopy type of $\mathcal{A}_{0,2}$} \label{cob5.3}  The first map in the homotopy fiber sequence of Theorem \ref{mainB} is the unique up to homotopy map of infinite loop spaces 
induced by
\begin{displaymath}
B\Diff^+(\Sigma^{(d)}_g) \stackrel{\simeq}{\to} E\Sigma_d \times_{\Sigma_d} B\Diff^+(\Sigma^d_g) \stackrel{1 \times d}{\longrightarrow} E\Sigma_d \times_{\Sigma_d} BSO(2)^d.
\end{displaymath}
where the map $d$ is induced from the evaluation of the differential of a diffeomorphism at the $d$ marked points, cf. Lemma \ref{ses}. In the special case $(g,d)=(0,2)$, we can identify this as a stable transfer map.

\begin{proposition} \label{BO-transfer}
The map $QBO(2)_+ \to QBSO(2)_+$ in \eqref{mainB1} is the stable transfer map of the double covering (associated with) $BSO(2) \to BO(2)$.
\end{proposition}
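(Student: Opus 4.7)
The plan is to identify the relevant map already at the level of the spaces from which the infinite loop structure is obtained, using the explicit description given just before the statement.

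First, I would recall that by the discussion preceding the proposition, the map $QBO(2)_+ \to QBSO(2)_+$ is the infinite loop extension of the composite
\begin{displaymath}
B\Diff^+(\Sigma^{(2)}_0) \xrightarrow{\simeq} E\Sigma_2 \times_{\Sigma_2} B\Diff^+(\Sigma^2_0) \xrightarrow{1\times d} E\Sigma_2 \times_{\Sigma_2} BSO(2)^2,
\end{displaymath}
where $d$ is induced by the evaluation of the differential of a diffeomorphism at the two marked points. By Lemma \ref{ses}(c), there are homotopy equivalences $\Diff^+(\Sigma^{(2)}_0)\simeq O(2)$ and $\Diff^+(\Sigma^2_0)\simeq SO(2)$; via these, the composite rewrites (up to homotopy) as a map
\begin{displaymath}
BO(2) \xrightarrow{\simeq} E\Sigma_2 \times_{\Sigma_2} BSO(2) \longrightarrow E\Sigma_2 \times_{\Sigma_2} BSO(2)^2.
\end{displaymath}
The first equivalence here is the standard identification coming from the short exact sequence $1\to SO(2)\to O(2)\to \Sigma_2\to 1$, and so represents the universal $\Sigma_2$-bundle $BSO(2)\to BO(2)$ as a Borel construction.

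Next, I would match this with the pretransfer described in section \ref{cob3.2}. For the double covering $p\colon BSO(2) \to BO(2)$, the associated $\Sigma_2$-bundle $\bar{X}\to BO(2)$ is exactly $BSO(2)\to BO(2)$ itself (its total space is the space of pairs of distinct preimages, which is connected and is pulled back from $BSO(2)$ along $p$, hence equals $BSO(2)$). The pretransfer is then the quotient by $\Sigma_2$ of the $\Sigma_2$-equivariant map $\bar{X}=BSO(2)\to BSO(2)\times BSO(2)$ sending a point to the pair $(x_1,x_2)$ of its two preimages under $p$. The key calculation is therefore to check that the $\Sigma_2$-equivariant map $BSO(2)\to BSO(2)\times BSO(2)$ induced by $d$ is (equivariantly homotopic to) this pairing of preimages. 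Equivalently, identifying $BSO(2)$ with a classifying space for rotations about the axis through the two marked points of $S^2$, one checks that the differential at the north and south poles factors the rotation $SO(2)$ into $SO(2)\times SO(2)$ through the equivariant diagonal (equivariant with respect to the antipodal swap, which on one tangent plane corresponds to complex conjugation). Both the pretransfer and the $d$-map are characterized, up to equivariant homotopy covering the identity of $BO(2)$, by being fiberwise a homotopy equivalence of the fiber $\Sigma_2$ with $\{x_1,x_2\}$, so the two agree up to homotopy.

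Finally, I would apply the construction of the stable transfer map recalled in section \ref{cob3.2}: the stable transfer $QBO(2)_+\to QBSO(2)_+$ is the unique infinite loop extension of the pretransfer $BO(2)\to E\Sigma_2\times_{\Sigma_2}BSO(2)^2$ followed by the $\Sigma_2$-operadic structure of $QBSO(2)_+$. Since the map $B\overline{\mathcal{A}}_{0,2}$-homotopy fiber $\to Q_0BSO(2)_+$ coming from Theorem \ref{mainB} is likewise the unique infinite loop extension of the spacelevel map just identified with the pretransfer, the two infinite loop maps agree, proving the claim. The main technical obstacle is the equivariant identification in the preceding paragraph: one must choose orientations carefully to see that the evaluation of the differential really is the pretransfer and not, say, twisted by an inner automorphism of $SO(2)\times SO(2)$. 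This is harmless for the underlying spaces after dividing by $\Sigma_2$, but one should verify that the ambiguity does not affect the homotopy class of the resulting map $BO(2)\to E\Sigma_2\times_{\Sigma_2}BSO(2)^2$.
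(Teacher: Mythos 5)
Your proposal is correct and follows essentially the same route as the paper: both reduce to identifying the space-level map $BO(2)\to E\Sigma_2\times_{\Sigma_2}BSO(2)^2$ with the pretransfer via the formula $\theta\mapsto(\theta,-\theta)$ (equivalently, the wreath embedding $\Sigma_2\ltimes SO(2)\hookrightarrow\Sigma_2\wr SO(2)$), and then pass to the unique infinite loop extension. The paper routes this identification through the forgetful functor $\mathbf{1}\downarrow_{\overline{\mathcal{A}}_{0,2}}\mathbf{0}\to\End_{\overline{\mathcal{A}}_{0,2}}(\mathbf{1})$ and the stabiliser computation of Lemma \ref{little-comma cat}(b) rather than through the differential-evaluation description, but the content is the same.
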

\begin{proof}
The map is induced by the forgetful functor $\mathbf{1} \downarrow_{\overline{\mathcal{A}}_{0,2}} \mathbf{0} \to \End_{\overline{\mathcal{A}}_{0,2}}(\mathbf{1})$ which can be identified with the homomorphism
\begin{align*}
\Sigma_2 \ltimes SO(2) \to \Sigma_2 \wr SO(2) \\
(\sigma, \theta) \mapsto (\sigma; \theta,- \theta).
\end{align*}
Here $\theta$ is the angle of a rotation in $SO(2)$. This homomorphism induces the pretransfer $BO(2)=B(\Sigma_2 \ltimes SO(2)) \to E\Sigma_2 \times_{\Sigma_2} BSO(2)^2$ of the double covering $BSO(2) \to BO(2)$, hence the result follows.
\end{proof}

Let $\delta_2: or_2 \to BO(2)$ denote the determinant line bundle, i.e. the line bundle associated with the double covering $p_n: Gr_2 ^+ (\R^{2+ \infty}) \to Gr_2(\R^{2+ \infty})$. Let $-\delta_2: BO(2) \to \Z \times BO$ be the (classifying map of) the stable vector bundle of rank $-1$ that is inverse to $\delta_2$.  This produces a Thom spectrum $\mathbf{Th}(-\delta_2)$ in the following way (see also \cite[IV.5]{Ru}). Let $\{-1\} \times BO(n-1) \hookrightarrow \Z \times BO$ denote the canonical inclusion and $X_n:= (-\delta_2)^{-1}(\{-1\} \times BO(n-1))$. The collection of the spaces $X_n$ defines a exhaustive filtration of $BO(2)$:
\begin{displaymath}
X_1 \subseteq X_2 \subseteq \cdots \subseteq X_n \subseteq \cdots \subseteq BO(2).
\end{displaymath}
Let $\delta_{2,n}^{\perp}:= (- \delta_2 |_{X_n})^*(\gamma_{n-1})$ be the pullback of the universal $(n-1)$-dimensional bundle over $BO(n-1)$. There are canonical isomorphisms $\delta_{2,n+1}^{\perp}|_{X_n} \cong \delta_{2,n}^{\perp} \oplus \epsilon^1$, so we obtain a Thom specturm $\mathbf{Th}(-\delta_2)$ where
\begin{displaymath}
\mathbf{Th}(-\delta_2)_n := Th(\delta_{2,n}^{\perp})
\end{displaymath}
and the structure maps $\Sigma Th(\delta_{2,n}^{\perp}) \to Th(\delta_{2,n+1}^{\perp})$ are induced by the pullback square
\begin{displaymath}
\xymatrix{
\delta_{2,n}^{\perp} \oplus \epsilon^1 \ar[r] \ar[d] & \delta_{2,n+1}^{\perp} \ar[d] \\
X_n \ar[r] & X_{n+1}.
}
\end{displaymath}

\begin{theorem} \label{specialcase2}
There is a weak homotopy equivalence of infinite loop spaces $B \mathcal{A}_{0,2} \stackrel{\simeq}{\to} \Omega^{\infty -1} \mathbf{Th}(- \delta_2)$.
\end{theorem}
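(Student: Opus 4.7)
The plan is to combine the homotopy fiber sequence of infinite loop spaces from Theorem \ref{mainB} with a Pontryagin--Thom cofiber sequence of spectra that identifies the cofiber of the stable transfer. By Theorem \ref{mainB} and Proposition \ref{BO-transfer}, we have a homotopy fiber sequence of infinite loop spaces
\begin{equation*}
QBO(2)_+ \stackrel{tr}{\to} QBSO(2)_+ \to B\mathcal{A}_{0,2},
\end{equation*}
in which the first map is the stable transfer associated to the double covering $BSO(2) \to BO(2)$. Since this is a fiber sequence of infinite loop spaces, $B\mathcal{A}_{0,2}$ is weakly equivalent to $\Omega^{\infty} E$ for the cofiber $E$ of the spectrum-level stable transfer $\Sigma^{\infty} BO(2)_+ \to \Sigma^{\infty} BSO(2)_+$, and the whole task is to identify $E$ with $\Sigma \mathbf{Th}(-\delta_2)$.

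The key step is to produce a cofiber sequence of spectra
\begin{equation*}
\mathbf{Th}(-\delta_2) \to \Sigma^{\infty} BO(2)_+ \stackrel{tr}{\to} \Sigma^{\infty} BSO(2)_+,
\end{equation*}
entirely analogous to the Madsen--Tillmann cofiber sequence \eqref{MTO-cof} in the case $d=1$. Concretely, viewing $BSO(2)$ as the unit sphere bundle $S(\delta_2)$ and $BO(2)$ as the base of the disc bundle $D(\delta_2)$, a standard Pontryagin--Thom collapse construction, together with the canonical trivialisation of $\delta_2$ on its own sphere bundle, produces this cofiber sequence with the stable transfer as connecting map. Equivalently, one can exhibit it as the pullback of the universal sequence \eqref{g-spectra} along the classifying map $w_1: BO(2) \to B\Sigma_2 = \R P^{\infty}$ of $\delta_2$ (so that $\delta_2 = w_1^* \gamma_1$), using the naturality of the stable transfer in the base of a double cover.

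Rotating this cofiber sequence and applying $\Omega^{\infty}$ yields a homotopy fiber sequence of infinite loop spaces
\begin{equation*}
QBO(2)_+ \stackrel{tr}{\to} QBSO(2)_+ \to \Omega^{\infty-1} \mathbf{Th}(-\delta_2).
\end{equation*}
Since this fiber sequence and the one from Theorem \ref{mainB} share the same first two terms and the same first map, the uniqueness up to equivalence of the cofiber of a map of spectra produces the desired weak homotopy equivalence $B\mathcal{A}_{0,2} \stackrel{\simeq}{\to} \Omega^{\infty-1} \mathbf{Th}(-\delta_2)$ of infinite loop spaces.

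The main technical obstacle is the careful construction of the spectrum-level cofiber sequence and the verification that the connecting map is precisely the stable transfer identified in Proposition \ref{BO-transfer}. Since \cite{GMTW} carried out the analogous construction in the universal case (producing the Madsen--Tillmann spectrum $MT(1) = \mathbf{Th}(-\gamma_1)$), the present case follows by base change along $w_1$, with naturality of the transfer ensuring that the maps match up as required.
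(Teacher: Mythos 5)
Your proposal is correct and takes essentially the same approach as the paper: the paper also constructs the cofiber sequence $\mathbf{Th}(-\delta_2) \to \Sigma^{\infty}BO(2)_+ \to \Sigma^{\infty}BSO(2)_+$ via a parametrised Pontryagin--Thom collapse for the sphere bundle of $\delta_2$ (citing \cite[Lemma 2.1]{Gal}, carried out on the finite stages $X_n$ of the filtration defining $\mathbf{Th}(-\delta_2)$), identifies the second map with the stable transfer, and concludes by comparison with the fiber sequence of Theorem \ref{mainB} and Proposition \ref{BO-transfer}.
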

\begin{proof}
Let $\delta_{2,n}: = \delta_2 |_{X_n}$ and $p_n: S(\delta_{2,n}) \to X_n$ denote the projection from the associated double covering. The bundles $\delta_{2,n}$ and $\delta_{2,n}^{\perp}$ give a cofiber sequence (see \cite[Lemma 2.1]{Gal}),
\begin{equation} \label{thomcofiber}
Th(\delta_{2,n}^{\perp}) \to Th(\delta_{2,n} \oplus \delta_{2,n}^{\perp}) \cong \Sigma^n X_n{}_+ \stackrel{\partial}{\to} Th(\R \oplus p_n^* \delta_{2,n}^{\perp}).
\end{equation}
where the map $\partial$ is a parametrised Pontryagin-Thom map for the double covering $p_n$. The bundle $\R \oplus p_n^* \delta_{2,n}^{\perp}$ is trivial, so the cofiber sequence \eqref{thomcofiber} can be written as
\begin{displaymath}
\mathbf{Th}(-\delta_2)_n \to \Sigma^n X_n{}_+ \stackrel{\partial}{\to} \Sigma^n S(\delta_{2,n})_+
\end{displaymath}
where the last map induces the stable transfer map of the double covering $p_n$. Since $Gr_2(\R^{2 + n})$ is contained in $X_N$ for some $N$ sufficiently large, it follows that the connectivity of the inclusions $X_n \to BO(2)$ and $S(\delta_{2,n}) \to BSO(2)$ increases to $\infty$ as $n \to \infty$. Therefore there is a cofiber sequence of spectra
\begin{displaymath}
\mathbf{Th}(- \delta_2) \to \Sigma^{\infty} BO(2)_+ \to \Sigma^{\infty} BSO(2)_+ 
\end{displaymath}
and so a homotopy fiber sequence of infinite loop spaces,
\begin{equation*}
\Omega^{\infty} \mathbf{Th}(-\delta_2) \to \Omega^{\infty} \Sigma^{\infty} BO(2)_+ \to \Omega^{\infty} \Sigma^{\infty} BSO(2)_+
\end{equation*}
where the last map is the stable transfer of the double covering $BSO(2) \to BO(2)$. Then the result follows by a comparison with the homotopy fiber sequence of Theorem \ref{mainB} and Proposition \ref{BO-transfer}.
\end{proof}

\renewcommand{\thesubsection}{A.\arabic{subsection}}
\setcounter{subsection}{1}

\appendix
\begin{center}
 {\bf Appendix : Comma categories and $\mathcal{W}$-fiber sequences} 
\end{center}

\subsection{Comma categories} \label{A.1} The purpose of this appendix is to discuss some generalisations of the results of Section \ref{cob2}. Let $\mathcal{C}$ be a small topological category with a discrete space of objects such that the inclusion $\Ob\mathcal{C} \to \Mor\mathcal{C}$ is a cofibration. 

Let $\mathcal{D}$ be a small topological category and $F: \mathcal{D} \to \mathcal{C}$ a continuous functor. For every $c \in \Ob\mathcal{C}$, there is an over-category $F \downarrow c$. The objects are the morphisms $F(d) \rightarrow c$ in $\mathcal{C}$ and a morphism from $F(d) \stackrel{u}{\to} c$ to $ F(d') \stackrel{v}{\to} c$ is given by a morphism $d \stackrel{f}{\to} d'$ such that the triangle
\begin{displaymath}
\xymatrix{
F(d) \ar[r]^{F(f)} \ar[d]^u  & F(d') \ar[dl]^v \\
c & }
\end{displaymath}
commutes in $\mathcal{C}$.  Composition of morphisms in $F \downarrow c$ is defined by the composition in $\mathcal{D}$. This defines a topological category where the topology is induced by the topologies of $\mathcal{C}$ and $\mathcal{D}$. Furthermore, the definition of $F \downarrow c$ is natural in $c$, i.e., for every pair $(\mathcal{D}, F: \mathcal{D} \to \mathcal{C})$, there is a continuous functor 
\begin{align*}
\mathcal{F}(\mathcal{D},F): \mathcal{C} \to \mathcal{C}at(\mathcal{T}op) \\
c \mapsto \mathcal{F}(\mathcal{D},F)(c):= F \downarrow c \\
( c \stackrel{f}{\to} c' ) \mapsto  (F \downarrow c \stackrel{f_*}{\to} F \downarrow c')).
\end{align*}

Let us consider a category $T(\mathcal{C})$ of such pairs. The objects of $T(\mathcal{C})$ are pairs $(\mathcal{D}, F: \mathcal{D} \to \mathcal{C})$ where $\mathcal{D}$ is a small topological category and $F$ is a continuous functor. A morphism from $(\mathcal{D}, F)$ to $(\mathcal{E},G)$ is a pair $(\kappa, \eta)$ where $\kappa: \mathcal{D} \to \mathcal{E}$ is a continuous functor and $\eta: G \circ \kappa \rightarrow F$ is a natural transformation. Note that the definition of $T(\mathcal{C})$ is natural in $\mathcal{C}$. 

\begin{example} If $\mathcal{D}= End_{\mathcal{C}}(c) \stackrel{F}{\to} \mathcal{C}$ is the natural inclusion, then $\mathcal{F}(\mathcal{D}, F)$ is the diagram of little comma categories $\mathcal{F}_c$ of section \ref{cob2.1}. 
\end{example}

Let $\mathrm{Fun}(\mathcal{C}, \mathcal{C}at(\mathcal{T}op))$ denote the category of continuous functors $\mathcal{C} \to \mathcal{C}at(\mathcal{T}op)$ and natural transformations between them. The following proposition may be considered as a generalisation of the Yoneda lemma.

\begin{proposition} \label{superyoneda}
There is a fully faithful functor $\mathcal{F}: T(\mathcal{C}) \to \mathrm{Fun}(\mathcal{C}, \mathcal{C}at(\mathcal{T}op))$.
\end{proposition}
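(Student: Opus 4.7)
The plan is to construct $\mathcal{F}$ on morphisms and then exhibit an explicit inverse on each hom-set by evaluating natural transformations at the tautological identity objects $1_{F(d)} \in F \downarrow F(d)$.

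First I would define $\mathcal{F}$ on a morphism $(\kappa,\eta): (\mathcal{D},F) \to (\mathcal{E},G)$ as follows: its component at $c \in \Ob\mathcal{C}$ is the continuous functor $F \downarrow c \to G \downarrow c$ that sends an object $F(d) \stackrel{u}{\to} c$ to the composite $G(\kappa(d)) \stackrel{\eta_d}{\to} F(d) \stackrel{u}{\to} c$ and sends a morphism $f: d \to d'$ lying over $c$ to $\kappa(f): \kappa(d) \to \kappa(d')$. The required triangle in $G \downarrow c$ commutes because $\eta$ is natural. Naturality of this assignment in $c$ is immediate from the definition of $c \mapsto F \downarrow c$, and compatibility with composition of morphisms in $T(\mathcal{C})$ is a direct check.

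Next, to prove full faithfulness, I would produce an inverse. Given a natural transformation $\Phi: \mathcal{F}(\mathcal{D},F) \to \mathcal{F}(\mathcal{E},G)$, evaluate $\Phi_{F(d)}$ at $1_{F(d)} \in F \downarrow F(d)$; this yields an object of $G \downarrow F(d)$, i.e.\ a morphism $\eta_d: G(\kappa(d)) \to F(d)$ in $\mathcal{C}$, which defines $\kappa$ on objects and $\eta$ componentwise. For a morphism $f: d \to d'$, naturality of $\Phi$ with respect to $F(f): F(d) \to F(d')$ (applied to the object $1_{F(d)}$) identifies $\Phi_{F(d')}(F(f))$ with $F(f) \circ \eta_d$; applying $\Phi_{F(d')}$ to $f$, now regarded as a morphism from $F(f)$ to $1_{F(d')}$ in $F \downarrow F(d')$, then produces a morphism in $G \downarrow F(d')$ from $F(f) \circ \eta_d$ to $\eta_{d'}$. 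Unpacking, this is exactly an arrow $\kappa(f): \kappa(d) \to \kappa(d')$ in $\mathcal{E}$ satisfying $\eta_{d'} \circ G(\kappa(f)) = F(f) \circ \eta_d$, i.e.\ naturality of $\eta: G \circ \kappa \to F$. Functoriality of $\kappa$ follows from that of each $\Phi_{F(d'')}$.

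Faithfulness is immediate: if $\mathcal{F}(\kappa_1,\eta_1) = \mathcal{F}(\kappa_2,\eta_2)$, evaluating at the objects $1_{F(d)}$ recovers the same $(\kappa_i(d), \eta_{i,d})$, and a similar evaluation on morphisms recovers $\kappa_i(f)$. For fullness, I would verify that the pair $(\kappa,\eta)$ extracted from a given $\Phi$ reproduces it: for every $u: F(d) \to c$, naturality of $\Phi$ along $u$ gives
\[
\Phi_c(u) = \Phi_c(u_*(1_{F(d)})) = u_*(\Phi_{F(d)}(1_{F(d)})) = u \circ \eta_d,
\]
which matches $\mathcal{F}(\kappa,\eta)_c(u)$ by construction, and the analogous identity on morphisms follows from the same naturality argument.

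The main obstacle I anticipate is continuity. One must verify that the assignments $d \mapsto \kappa(d)$, $f \mapsto \kappa(f)$ and $d \mapsto \eta_d$ extracted from $\Phi$ are continuous, so that the inverse correspondence lands in $T(\mathcal{C})$. The discreteness of $\Ob\mathcal{C}$ lets us index the functors $\Phi_{F(d)}$ by the discrete decomposition of $\Ob\mathcal{D}$ along $F$, and the cofibration hypothesis on $\Ob\mathcal{C} \hookrightarrow \Mor\mathcal{C}$ gives well-behaved subspace topologies on the $F \downarrow c$, so continuity of $\Phi$ should transfer to continuity of $\kappa$ and $\eta$ by tracing through the evaluation-at-$1_{F(d)}$ map. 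This is the only place where the topological (as opposed to purely categorical classical Yoneda) content of the proposition is used, and the verification is essentially bookkeeping once the comma-category topologies have been made explicit.
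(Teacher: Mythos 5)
Your construction is exactly the paper's: the same definition of $\mathcal{F}$ on morphisms via $\eta_d$ followed by post-composition, faithfulness read off from evaluation at the identities $1_{F(d)}$, and fullness by extracting $(\kappa,\eta)$ from a natural transformation $\Phi$ through $\kappa(d)=p_{F(d)}\Phi_{F(d)}(1_{F(d)})$ and $\kappa(f)=p_{F(d')}\Phi_{F(d')}(f)$ with $f$ viewed as a morphism $F(f)\to 1_{F(d')}$ in $F\downarrow F(d')$. Your additional remarks on continuity address a point the paper leaves implicit, but the argument is essentially identical.
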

\begin{proof}  Let us clarify the definition of $\mathcal{F}$ on morphisms: a morphism $(\kappa, \eta): (\mathcal{D}, F) \to (\mathcal{E}, G)$ in $T({\mathcal{C}})$ goes to the natural transformation $\mathcal{F}_{(\kappa,\eta)}: \mathcal{F}(\mathcal{D}, F) \rightarrow \mathcal{F}(\mathcal{E}, G)$ whose component functor at the object $c \in \Ob\mathcal{C}$ is induced by the diagram
\begin{displaymath}
\xymatrix{
G \kappa(d) \ar[r]^{G\kappa(\sigma)} \ar[d]^{\eta_d} & G\kappa(d') \ar[d]^{\eta_{d'}} \\
F(d) \ar[d] \ar[r]^{F(\sigma)} & F(d') \ar[dl] \\
c 
}
\end{displaymath}
It is clear that $\mathcal{F}$ is faithful. In order to see that it is also full, let $\delta: \mathcal{F}(D, F) \rightarrow \mathcal{F}(\mathcal{E}, G)$ be a natural transformation 
between diagrams of comma categories that are induced by $F: \mathcal{D} \to \mathcal{C}$ and $G: \mathcal{E} \to \mathcal{C}$ respectively. For every $u: c \to c'$ in $\mathcal{C}$, there is a commutative diagram
\begin{displaymath}
\xymatrix{
F \downarrow c \ar[r]^{\delta_c} \ar[d]^{u_*} & G \downarrow c \ar[d]^{u_*} \\
F \downarrow c' \ar[r]^{\delta_{c'}} & G \downarrow c'
}
\end{displaymath}
Let $p_c : G \downarrow c \to \mathcal{E}$ denote the obvious forgetful functor. We define a functor $\kappa: \mathcal{D} \to \mathcal{E}$ as follows:
\begin{itemize}
\item on objects: $\kappa(d) = p_{F(d)} \circ \delta_{F(d)}(d, 1_{F(d)})$, and
\item on morphims: $\kappa(d \stackrel{\sigma}{\to} d') = p_{F(d')} \circ \delta_{F(d')}((d, F(\sigma)) \stackrel{\sigma}{\to} (d', 1_{F(d')}))$
\end{itemize}
There is a natural transformation $\eta: G \kappa \to F$ whose component at $d \in \Ob \mathcal{D}$ is given by the morphism $\delta_{F(d)}(d, 1_{F(d)})$ in $\mathcal{C}$. By functoriality, it can be verified that $\delta$ is induced by the pair $(\kappa, \eta)$, so $\mathcal{F}$ is full.
\end{proof}

\begin{remark}
The inclusion of an object $* \to \mathcal{C}$ defines an object in $T(\mathcal{C})$. Moreover, there is a functor ${i}: \mathcal{C}^{op} \to T(\mathcal{C})$ that is defined on objects by the inclusion of the objects of $\mathcal{C}$. The composite $\mathcal{F} \circ i$ is the classical Yoneda embedding. 
\end{remark}

\subsection{Lemma \ref{Y} revisited} We prove the following generalisation of Lemma \ref{Y}. 

\begin{proposition} \label{Y-ext}
There is a natural homotopy equivalence 
\begin{displaymath}
 \xymatrix{
u_{(\mathcal{D},F)}: B (\int_{\mathcal{C}} \mathcal{F}(\mathcal{D},F))  \ar[r]^(0.70){\simeq} & B \mathcal{D} \\
}
\end{displaymath}
 for every $(\mathcal{D},F) \in \Ob T(\mathcal{C})$.
\end{proposition}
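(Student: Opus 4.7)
The plan is to generalize the second proof of Lemma \ref{Y}, constructing an explicit continuous functor $U_{(\mathcal{D},F)} : \int_{\mathcal{C}} \mathcal{F}(\mathcal{D},F) \to \mathcal{D}$ together with a continuous adjoint $G_{(\mathcal{D},F)} : \mathcal{D} \to \int_{\mathcal{C}} \mathcal{F}(\mathcal{D},F)$ whose composites give (after applying $B$) a homotopy inverse pair.

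First I would unwind the definition of $\int_{\mathcal{C}} \mathcal{F}(\mathcal{D},F)$: an object is a triple $(c, d, \alpha : F(d) \to c)$ and a morphism $(c, d, \alpha) \to (c', d', \alpha')$ is a pair $(k, f)$ with $k : c \to c'$ in $\mathcal{C}$ and $f : d \to d'$ in $\mathcal{D}$ satisfying $\alpha' \circ F(f) = k \circ \alpha$. This makes it immediate to define $U_{(\mathcal{D},F)}(c,d,\alpha) = d$ and $U_{(\mathcal{D},F)}(k,f) = f$, which is clearly continuous. In the opposite direction I set $G_{(\mathcal{D},F)}(d) = (F(d), d, 1_{F(d)})$ on objects and $G_{(\mathcal{D},F)}(f) = (F(f), f)$ on morphisms; the identity $\alpha' \circ F(f) = k \alpha$ reduces to $F(f) = F(f)$ in this case, so $G_{(\mathcal{D},F)}$ is a well-defined continuous functor. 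By construction $U_{(\mathcal{D},F)} \circ G_{(\mathcal{D},F)} = \id_{\mathcal{D}}$, hence $B U_{(\mathcal{D},F)} \circ B G_{(\mathcal{D},F)} = \id$.

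Next I would construct a continuous natural transformation $\rho : G_{(\mathcal{D},F)} \circ U_{(\mathcal{D},F)} \Rightarrow \id$ whose component at $(c, d, \alpha)$ is the morphism $(\alpha, 1_d) : (F(d), d, 1_{F(d)}) \to (c, d, \alpha)$. The compatibility equation $\alpha \circ F(1_d) = \alpha \circ 1_{F(d)}$ is automatic, and naturality with respect to $(k,f)$ reduces precisely to the defining condition $\alpha' \circ F(f) = k \alpha$ for a morphism in the Grothendieck construction. By the standard fact that a natural transformation of continuous functors induces a homotopy between the maps of classifying spaces (as recalled in the second proof of Lemma \ref{Y}), this yields $B G_{(\mathcal{D},F)} \circ B U_{(\mathcal{D},F)} \simeq \id$. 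Combined with the on-the-nose identity $U G = \id$, this shows $B U_{(\mathcal{D},F)}$ is a homotopy equivalence.

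Finally I would verify naturality in $(\mathcal{D}, F) \in T(\mathcal{C})$. Given a morphism $(\kappa, \eta) : (\mathcal{D}, F) \to (\mathcal{E}, G)$, the induced natural transformation $\mathcal{F}_{(\kappa,\eta)} : \mathcal{F}(\mathcal{D},F) \to \mathcal{F}(\mathcal{E}, G)$ described in Proposition \ref{superyoneda} passes through the Grothendieck construction to a continuous functor $\int_{\mathcal{C}} \mathcal{F}(\mathcal{D},F) \to \int_{\mathcal{C}} \mathcal{F}(\mathcal{E}, G)$ sending $(c, d, \alpha) \mapsto (c, \kappa(d), \alpha \circ \eta_d)$ and, unwinding definitions, this functor commutes strictly with the forgetful functors $U_{(\mathcal{D},F)}$ and $U_{(\mathcal{E},G)}$ followed by $\kappa$. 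The only mild subtlety, and the one to watch, is continuity and well-definedness of the natural transformation $\rho$ across the whole diagram: this is where the hypothesis that $\Ob \mathcal{C}$ is discrete and $\Ob\mathcal{C} \hookrightarrow \Mor\mathcal{C}$ is a cofibration is silently used, to ensure that the component $(\alpha, 1_d)$ varies continuously in $(c, d, \alpha)$ and defines a bona fide natural transformation in the topological sense. Once this is in hand, the argument is otherwise formal.
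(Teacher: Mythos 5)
Your proposal is correct and follows essentially the same route as the paper: the forgetful functor $U$, the section $G(d)=(F(d),(d,1_{F(d)}))$ with $UG=1$, and the natural transformation $\rho: GU \Rightarrow 1$ with components $(\alpha,1_d)$, exactly as in the second proof of Lemma \ref{Y}. (Your description of $G$ on morphisms as $(F(f),f)$ is in fact the correct reading of the paper's $(F(f),F(f))$, and your added naturality check in $(\mathcal{D},F)$ is a welcome supplement the paper leaves implicit.)
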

\begin{proof} The proof is similar to the (second) proof of Lemma \ref{Y}.  There is a forgetful functor $U: \int_{\mathcal{C}} \mathcal{F}(\mathcal{D},F) \to \mathcal{D}$. There is also a functor $G: \mathcal{D} \to \int_{\mathcal{C}} \mathcal{F}(\mathcal{D},F)$ defined as follows: the object $d$ goes to $(F(d),(d, 1_{F(d)}))$ and a morphism $f$ in $\mathcal{D}$ is sent to $(F(f),F(f))$. Note that $U G = 1$ so that $(BU)(BG) = 1$ as well. The pair $(G,U)$ defines an adjunction, so there is a natural transformation $\rho: G U \rightarrow 1$ whose components are as follows: given $(c,d, \alpha: Fd \to c) \in \Ob(\int_{\mathcal{C}} \mathcal{F}(\mathcal{D},F))$, $\rho_c$ is $(\alpha, 1_d)$, i.e. the diagram
\begin{displaymath}
\xymatrix{
Fd \ar[r]^{1} \ar[d]^{1}  & Fd \ar[d]^{\alpha} \\
Fd \ar[r]^{\alpha} & c 
}
\end{displaymath}
It follows that $(BG)(BU) \simeq 1$, hence result.
 \end{proof}

By Proposition \ref{superyoneda}, a natural transformation $\mu: \mathcal{F}(\mathcal{D},F)  \rightarrow \mathcal{F}(\mathcal{D}',F')$ is given by a morphism $(\kappa, \eta): (\mathcal{D}, F) \to (\mathcal{D}',F')$ in $T(\mathcal{C})$. Let $J$ be a small category and $G: J \to T(\mathcal{C})$ a functor.  Associated to this, there is a functor 
\begin{displaymath}
\mathcal{G}_{J}: \mathcal{C}  \to \mathcal{C}at(\mathcal{T}op)
\end{displaymath}
defined on objects by 
\begin{displaymath}
\xymatrix{
c \ar@{|->}[r] & \int_J \mathcal{F}(G -)(c). \\
}
\end{displaymath}
Here $\mathcal{F}$ is the functor of Proposition \ref{superyoneda}. Let $U:T(\mathcal{C}) \to \mathcal{C}at(\mathcal{T}op)$ denote the forgetful functor that is defined on objects by $(\mathcal{D},F) \mapsto D$. For every $c \in \Ob \mathcal{C}$, there is a canonical functor
\begin{displaymath}
\xymatrix{
\mathcal{G}_J(c) \ar[r] & \int_J UG \\
}
\end{displaymath}
that is induced by the canonical forgetful functors $\mathcal{F}(G(j))(c) \to U(G(j))$.

\subsection{$\mathcal{W}$-fiber sequences} Let $\mathcal{W}$ be a class of morphisms in $\mathcal{T}op$. A sequence of maps 
\begin{displaymath}
\xymatrix{
F \ar[r]^i & E \ar[r]^p & B \\
}
\end{displaymath}
together with a homotopy $H: p  i \simeq \mathrm{const}_b$ is called a $\mathcal{W}$-fiber sequence if the canonical map $F \to \mathrm{hofiber}_b(p):= E \times_B \mathrm{Path}(B)$ is in $\mathcal{W}$. Here $\mathrm{Path}(B)=Map(([0,1], \{0\}),(B,b))$ and $ev_1: \mathrm{Path}(B) \to B$ denotes the path fibration. Following \cite{McDS}, we also define a map $f: E \to B$ to be a local $\mathcal{W}$-fibration if every $b \in B$ has arbitrarily small contractible neighborhoods $U$ such that the inclusion $f^{-1}(b') \to f^{-1}(U)$ is in $\mathcal{W}$ for all $b'$ in $U$. 

We will be interested in classes $\mathcal{W}$ that are closed under passage from the local to the global in the following sense.

\begin{definition}
A class of morphisms $\mathcal{W}$ in $\mathcal{T}op$ is called a \textit{localiser} \footnote{The terminology is inspired by Grothendieck's theory of \textit{localisateurs} in $\mathcal{C}at$, see \cite{Mal}.} if the following are satisfied:
\begin{itemize}
\item[(a)] $\mathcal{W}$ contains the weak homotopy equivalences and satisfies the ``2-out-of-3'' property,
\item[(b)] $\mathcal{W}$ is closed under homotopy pushouts, i.e., given a diagram
\begin{displaymath}
\xymatrix{
X_1 \ar[d]^{\sim} & X_0 \ar[l]_{i_1} \ar[r] \ar[d]^{\sim} & X_2 \ar[d]^{\sim} \\
Y_1 & Y_0 \ar[l]_{i_2} \ar[r] & Y_2 
}
\end{displaymath}
where $i_1$ and $i_2$ are cofibrations and the vertical maps are in $\mathcal{W}$, then $X_1 \cup_{X_0} X_2 \to Y_1 \cup_{Y_0} Y_2$ is also in $\mathcal{W}$,
\item[(c)] $\mathcal{W}$ is closed directed homotopy colimits, i.e., given a diagram
\begin{displaymath}
\xymatrix{
 X_0 \ar[r] \ar[d] & X_1 \ar[r] \ar[d] & \cdots \ar[r] & X_n \ar[r] \ar[d] & \cdots \\
Y_0 \ar[r] & Y_1 \ar[r] & \cdots \ar[r] & Y_n \ar[r] & \cdots
}
\end{displaymath}
where all the horizontal maps are cofibrations and the vertical maps are in $\mathcal{W}$, then the canonical map $ \co_n X_n \to \co_n Y_n$ is in $\mathcal{W}$.
\end{itemize} 
\end{definition}

\begin{example} 
For every generalised homology theory $E_*$ that satisfies the limit axiom, the class of $E_*$-equivalences of spaces defines a localiser on $\mathcal{T}op$. 
\end{example}

Let $\mathcal{W}$ denote an arbitrary localiser in $\mathcal{T}op$ for the rest of this section.

\begin{proposition}
If a map $F: X_. \to Y_.$ of good simplicial spaces is levelwise in $\mathcal{W}$, then so is also its geometric realisation $|F|: |X_.| \to |Y_.|$. 
\end{proposition}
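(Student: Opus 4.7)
The plan is to induct along the Segal skeletal filtration of geometric realisation. Recall that $|X_{\bullet}|$ carries an exhaustive filtration $|X_{\bullet}|^{(0)} \subseteq |X_{\bullet}|^{(1)} \subseteq \cdots$ with $|X_{\bullet}| = \co_n |X_{\bullet}|^{(n)}$, where goodness (degeneracies are cofibrations) forces each inclusion $|X_{\bullet}|^{(n-1)} \hookrightarrow |X_{\bullet}|^{(n)}$ to be a cofibration and fits it into a pushout square
\begin{displaymath}
\xymatrix{
P_n X  \ar[r] \ar[d] & \Delta^n \times X_n \ar[d] \\
|X_{\bullet}|^{(n-1)} \ar[r] & |X_{\bullet}|^{(n)}
}
\end{displaymath}
where $P_n X = \partial \Delta^n \times X_n \cup_{\partial \Delta^n \times L_n X} \Delta^n \times L_n X$, and $L_n X \subseteq X_n$ is the latching (degenerate) subspace. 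The corresponding squares for $Y_{\bullet}$ are compatible with $F$. Granting membership in $\mathcal{W}$ at every finite skeleton, condition (c) produces $|F| \in \mathcal{W}$ in the colimit, so the real content is the inductive step.

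Before inducting, I would record an auxiliary observation: if $f \in \mathcal{W}$ and $Z$ is a CW complex, then $f \times \id_Z \in \mathcal{W}$. One proves this by cell induction on $Z$: the base cases $Z = \ast$ and $Z = D^n$ reduce to $f$ itself by contractibility and 2-out-of-3 (axiom (a)); attaching an $n$-cell is a cofibration pushout, so (b) applies; and ascending along the cell filtration uses (c). In particular, $F_n \times \id_{\Delta^n}$ and $F_n \times \id_{\partial \Delta^n}$ lie in $\mathcal{W}$ whenever $F_n$ does.

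The inductive step then runs as follows. Base case: $|F|^{(0)} = F_0 \in \mathcal{W}$ by hypothesis. For the step, by axiom (b) applied to the skeletal pushout squares above it suffices to check that the three maps of the span defining $P_n X \to P_n Y$, namely $F_n \times \id_{\Delta^n}$, $F_n \times \id_{\partial \Delta^n}$, and $L_n F \times \id_{\partial\Delta^n}$ (together with the glued data along $\Delta^n \times L_n X \to \Delta^n \times L_n Y$), are all in $\mathcal{W}$; given this, the pushout map $|F|^{(n-1)} \to |F|^{(n)}$ is in $\mathcal{W}$ by (b), and combined with the inductive hypothesis $|F|^{(n-1)} \in \mathcal{W}$ and 2-out-of-3 one concludes $|F|^{(n)} \in \mathcal{W}$. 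The auxiliary claim handles the $F_n \times \id$ factors, leaving only the latching map $L_n F \colon L_n X \to L_n Y$.

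The main obstacle is this latching step: one needs $L_n F \in \mathcal{W}$. I would argue by a nested induction: $L_n X$ is the union of the images of the degeneracies $s_i \colon X_{n-1} \to X_n$, and it can be built as an iterated pushout of pieces indexed by subsets $S \subseteq \{0,\ldots,n-1\}$, where the piece for $S$ is an image of iterated degeneracies applied to $X_{n-|S|}$, and where each pushout in the filtration is along a cofibration by goodness. At each step the relevant span-map is of the form $F_k \in \mathcal{W}$ (possibly composed with $\id$-products, handled by the auxiliary claim), so axiom (b) propagates membership in $\mathcal{W}$ up the filtration. This Reedy-style bookkeeping is the only genuinely combinatorial part of the argument; everything else is a direct invocation of the three axioms of a localiser.
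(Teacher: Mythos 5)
Your proof is correct and takes essentially the same route as the paper's, which simply cites Segal's Appendix A for the fact that the realisation of a good simplicial space is a directed colimit of iterated pushouts along cofibrations and then invokes the localiser axioms. The skeletal filtration, the auxiliary claim that $\mathcal{W}$ is stable under product with a (finite) CW complex, and the latching-object induction are exactly the details the paper leaves to that reference.
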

\begin{proof}
The realisation of a good simplicial space can be formed naturally as the directed colimit of iterative pushouts along a cofibration \cite[Appendix A]{Se3}. Hence the result follows from the properties of the localiser $\mathcal{W}$. 
\end{proof}

\begin{proposition} \label{local-global} If $f: E \to B$ is a local $\mathcal{W}$-fibration and $B$ is locally contractible and paracompact, then $f^{-1}(b) \to E \stackrel{f}{\to} B$ is a $\mathcal{W}$-fiber sequence.
\end{proposition}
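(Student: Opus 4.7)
The plan is to adapt the classical McDuff-Segal quasifibration argument to the setting of a general localiser $\mathcal{W}$, using a \v{C}ech-style simplicial resolution of $B$. First, by paracompactness and local contractibility of $B$, I would fix a numerable open cover $\mathcal{U}=\{U_\alpha\}$, refined so that every non-empty finite intersection $V = U_{\alpha_0}\cap\cdots\cap U_{\alpha_p}$ is one of the distinguished ``arbitrarily small'' contractible neighborhoods appearing in the local $\mathcal{W}$-fibration hypothesis. Form the \v{C}ech simplicial space $N_\bullet \mathcal{U}$ with $p$-simplices $\bigsqcup_{\alpha_0,\ldots,\alpha_p} U_{\alpha_0\cdots\alpha_p}$ and the levelwise pullback $E_\bullet := N_\bullet\mathcal{U} \times_B E$. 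By Segal's cover theorem (cf. \cite[Appendix A]{Se3}), the augmentations $|N_\bullet\mathcal{U}| \to B$ and $|E_\bullet| \to E$ are weak homotopy equivalences, the latter using the pulled-back numerable cover of $E$.

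Second, I would verify the $\mathcal{W}$-fiber sequence property levelwise. For each non-empty intersection $V$ and each $b'' \in V$, contractibility of $V$ gives a weak equivalence $f^{-1}(V) \simeq \hof_{b''}(f|_V)$, and the local hypothesis gives that $f^{-1}(b'') \hookrightarrow f^{-1}(V)$ is in $\mathcal{W}$; combining, $f^{-1}(b'') \to \hof_{b''}(f|_V)$ is in $\mathcal{W}$. By the $2$-out-of-$3$ property applied along a zig-zag of overlaps joining $b''$ to a fixed basepoint $b$, these fibers are all $\mathcal{W}$-equivalent to $f^{-1}(b)$, so choices of such zig-zags yield levelwise $\mathcal{W}$-comparisons $f^{-1}(b) \to E_p$ compatible with the simplicial structure up to $\mathcal{W}$.

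Third, I would assemble the local data into a global statement. Axioms (b) and (c) imply, via the standard skeletal filtration of the realisation of a good simplicial space as iterated pushouts along cofibrations followed by a directed colimit, that $\mathcal{W}$ is preserved by realisations of good simplicial spaces. Thus the levelwise comparison realises to a map in $\mathcal{W}$; combined with the equivalences $|N_\bullet\mathcal{U}| \simeq B$ and $|E_\bullet| \simeq E$ from step one, this unpacks to the desired conclusion that $f^{-1}(b) \to \hof_b(f)$ lies in $\mathcal{W}$.

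The main obstacle will be the consistent propagation of the basepoint fiber through overlaps of the \v{C}ech cover: over a given simplex of the nerve the $\mathcal{W}$-equivalence between $f^{-1}(b)$ and $f^{-1}(b'')$ depends on a choice of path, and different choices must be reconciled on intersections. Just as in \cite{McDS}, the $2$-out-of-$3$ property is what permits this reconciliation at the level of the localiser, without the need for a single canonical choice; making this reconciliation fit together into a strict map of simplicial spaces (rather than only up to $\mathcal{W}$) is the technical crux, and may require passing to a suitable thickening of $E_\bullet$ by mapping cylinders before applying the realisation closure of $\mathcal{W}$.
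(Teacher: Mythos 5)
Your local analysis in the second step is correct and matches the key local input of the paper's argument: over a contractible $V$ the canonical map $\hof_{b''}(f|_V)\to f^{-1}(V)$ is a weak equivalence, so the local hypothesis together with 2-out-of-3 identifies $f^{-1}(b'')\to\hof_{b''}(f|_V)$ as a map in $\mathcal{W}$. The gap is in the global assembly. The map you propose to realise compares $f^{-1}(b)$ with $|E_\bullet|\simeq E$, not with $\hof_b(f)$; but ``$f^{-1}(b)\to E$ is in $\mathcal{W}$'' is the correct conclusion only when $B$ is contractible, and is false in general: the Hopf fibration $S^3\to S^2$ is a local homology fibration, yet $S^1\hookrightarrow S^3$ is not a homology equivalence (while $S^1\to\hof(S^3\to S^2)$ is). This is also why the ``technical crux'' you flag cannot be resolved: a strict map of simplicial spaces from (a resolution of) the constant object $f^{-1}(b)$ into $E_\bullet$ that is levelwise in $\mathcal{W}$ cannot exist in general, since its realisation would be exactly the false statement. (It already fails levelwise: $E_p$ is a disjoint union over all $(p+1)$-fold intersections, so a map out of the connected data $f^{-1}(b)$ lands in a single summand and cannot be a homology equivalence onto the whole of $E_p$.)

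The missing idea is the reduction to the contractible-base case via the path fibration. The paper first proves Proposition \ref{local-global2}: when $B$ is contractible, $f^{-1}(b)\to E$ is in $\mathcal{W}$. There your cover decomposition is essentially what is used, in the form of the homotopy colimit of $j\mapsto f^{-1}(U_j)$ over the poset of the cover (whose nerve is equivalent to $B$, hence weakly contractible), together with Lemma \ref{contractible diagrams}, which says that for a diagram over a weakly contractible index category with all structure maps in $\mathcal{W}$, the map from any value to the homotopy colimit is in $\mathcal{W}$. The general case then follows, as in McDuff--Segal, by pulling $f$ back along $ev_1:\mathrm{Path}(B)\to B$: the pullback is again a local $\mathcal{W}$-fibration, its base is contractible (and remains locally contractible and paracompact), its total space is $\hof_b(f)$, and its fiber over the constant path is $f^{-1}(b)$, so the contractible case applied to this pullback is precisely the assertion that $f^{-1}(b)\to\hof_b(f)$ is in $\mathcal{W}$. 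Your outline never introduces the path space, so it has no mechanism for reaching the homotopy fiber; once that reduction is in place, your steps become a correct proof of the contractible case.
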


First, we prove the following key lemma. 

\begin{lemma} \label{contractible diagrams}
Let $\mathcal{I}$ be a small category with weakly contractible nerve and $F: \mathcal{I} \to \mathcal{T}op$ be a diagram of spaces such that $F(f)$ is in $\mathcal{W}$ for every $f \in \Mor \mathcal{I}$. Then the canonical map 
$F(i) \to \ho_{\mathcal{I}} F$ is also in $\mathcal{W}$ for every $i \in \Ob \mathcal{I}$.
\end{lemma}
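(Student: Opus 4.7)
The strategy is to show that, under the hypothesis, $F$ is ``essentially constant'' in the $\mathcal{W}$-sense, so that $\ho_{\mathcal{I}} F$ is $\mathcal{W}$-equivalent to $F(i) \times |N\mathcal{I}|$, and therefore to $F(i)$ itself by contractibility of $|N\mathcal{I}|$. I will proceed in three steps.

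First I would establish a closure principle: if $\eta \colon G \to G'$ is a natural transformation of continuous diagrams $\mathcal{I} \to \mathcal{T}op$ whose components are all in $\mathcal{W}$, then the induced map $\ho_{\mathcal{I}} G \to \ho_{\mathcal{I}} G'$ is in $\mathcal{W}$. This follows from the bar-construction description of the homotopy colimit: at each simplicial level the map is a disjoint union of copies of components of $\eta$, hence in $\mathcal{W}$, and geometric realization then preserves $\mathcal{W}$ by the preceding proposition on realizations of simplicial spaces (which ultimately uses the localiser axioms (b) and (c)).

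Next I would handle the special case in which $\mathcal{I}$ has an initial object $i_0$. Here the constant diagram $c_{F(i_0)}$ admits a natural transformation to $F$ with components $F(i_0 \to j) \in \mathcal{W}$. By the closure principle, the induced map
\[
F(i_0) \times |N\mathcal{I}| \;=\; \ho_{\mathcal{I}} c_{F(i_0)} \;\longrightarrow\; \ho_{\mathcal{I}} F
\]
is in $\mathcal{W}$. Since $|N\mathcal{I}|$ is contractible (the classifying space of a category with an initial object is always so), the inclusion of $F(i_0)$ at the initial vertex is a weak equivalence, and composing gives $F(i_0) \to \ho_{\mathcal{I}} F$ in $\mathcal{W}$. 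For any other object $i$, the two-out-of-three property applied to $F(i_0) \xrightarrow{F(i_0 \to i)} F(i) \to \ho_{\mathcal{I}} F$ yields the result.

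Finally, for the general case I would pass to the under-category $\pi \colon i / \mathcal{I} \to \mathcal{I}$, which always has the initial object $\mathrm{id}_i$. Applying the previous step to $F \circ \pi$ on $i / \mathcal{I}$ gives $F(i) = (F\pi)(\mathrm{id}_i) \to \ho_{i / \mathcal{I}} F \pi$ in $\mathcal{W}$, so it remains to identify $\ho_{i / \mathcal{I}} F \pi$ with $\ho_{\mathcal{I}} F$ up to $\mathcal{W}$-equivalence via the canonical comparison. This last identification is the main obstacle: the forgetful functor $\pi$ is not homotopy-cofinal in the absolute sense, since the comma category $\pi / j$ has classifying space equivalent to the discrete set $\mathcal{I}(i,j)$, which can be empty or disconnected. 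Nonetheless, the hypothesis that $F$ inverts morphisms up to $\mathcal{W}$, combined with the contractibility of $|N\mathcal{I}|$, will force this comparison to be a $\mathcal{W}$-equivalence. One verifies this by a two-sided bar-construction / bisimplicial argument in the spirit of the proof of Lemma~\ref{Y}: two iterated realizations of a suitable bisimplicial space compute $\ho_{i/\mathcal{I}} F\pi$ and $\ho_{\mathcal{I}} F$ respectively, and at each stage the closure principle from the first step is applied to replace hocolims of sub-systems over over-categories with a terminal object by their values at that terminal object (using the dual form of the constant-diagram argument).
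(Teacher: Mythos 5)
Your first two steps are sound: the closure principle for pointwise $\mathcal{W}$-transformations follows from the levelwise description of the Bousfield--Kan homotopy colimit and the realisation proposition (noting that $\mathcal{W}$ is closed under coproducts, by axioms (b) and (c) applied over $\emptyset$), and the case of an initial object is handled correctly, including the two-out-of-three step for the other objects. The gap is in step 3, and it is fatal as written. By your step 2 the map $F(i)\to\ho_{i/\mathcal{I}}F\pi$ is in $\mathcal{W}$, and the composite $F(i)\to\ho_{i/\mathcal{I}}F\pi\to\ho_{\mathcal{I}}F$ is the canonical map of the lemma; so by two-out-of-three, the comparison map being in $\mathcal{W}$ is \emph{logically equivalent} to the statement you are trying to prove. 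You have not reduced the problem, only restated it. Worse, the bisimplicial/bar-construction argument you sketch for this comparison provably cannot be completed, because it nowhere uses the weak contractibility of $N\mathcal{I}$: decomposing $\ho_{i/\mathcal{I}}F\pi$ fibrewise over $c\in\mathcal{I}$ produces the discrete categories $\pi\downarrow c\simeq \mathcal{I}(i,c)$, and collapsing each component onto its initial (resp.\ terminal) object merely returns $\ho_{c}\bigl(\mathcal{I}(i,c)\times F(i)\bigr)\cong B(i/\mathcal{I})\times F(i)$ (resp.\ $\ho_{i/\mathcal{I}}F\pi$ itself), so there is no levelwise equivalence to realise. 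A concrete test: take $\mathcal{I}$ the group $\Z$ viewed as a one-object category and $F$ the constant point diagram, with $\mathcal{W}$ the weak homotopy equivalences. Then $F$ inverts all morphisms, the comma categories are discrete exactly as in your situation, yet the comparison map is $E\Z\to B\Z\simeq S^1$, which is not in $\mathcal{W}$. The only hypothesis violated is contractibility of $N\mathcal{I}$, which your step 3 never invokes; hence no argument of the shape you describe can succeed.

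The hypothesis that $N\mathcal{I}$ is weakly contractible has to enter globally, and this is what the paper's proof does: it uses cofinality only where it is genuinely available, namely for the projection $\mathrm{Simp}(\mathcal{I})\to\mathcal{I}$ from the category of simplices (homotopy cofinal for \emph{every} $\mathcal{I}$), thereby reducing to diagrams indexed by the simplex category of a weakly contractible simplicial set; it then writes that simplicial set as (a retract of) an iterated pushout of horn inclusions $\Lambda^k_n\to\Delta[n]$ and inducts over these attachments using axioms (b) and (c) of a localiser, the elementary cases being handled by the analogue of your step 2 (simplex categories of $\Delta[n]$ have terminal objects). If you want to keep your outline, step 3 must be replaced by some such cellular induction on $N\mathcal{I}$.
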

\begin{proof}
Let $\mathrm{Simp}(\mathcal{I})$ denote the category of simplices associated to the nerve of $\mathcal{I}$. There is a canonical projection functor $\epsilon: \mathrm{Simp}(\mathcal{I}) \to \mathcal{I}$ which is homotopically cofinal, e.g. see \cite{CS}.  It follows that the canonical map $$\ho_{\mathrm{Simp}(\mathcal{I})} \epsilon^* F \to \ho_{\mathcal{I}} F$$ is a weak homotopy equivalence for every $F: \mathcal{I} \to \mathcal{T}op$. This way we reduce to the case of a simplex category. Every weakly contractible simplex category is the colimit of pushouts along the horn inclusion of the simplex category of $\Lambda^k_n$ into the simplex category of $\Delta[n]$. Then the result follows from the closure properties of localisers by similar arguments as in \cite[Lemma 27.8]{CS}.
\end{proof}

\begin{proposition} \label{local-global2}
Let $f: E \to B$ be as above and assume that $B$ is contractible. Then $f^{-1}(b) \to E$ is in $\mathcal{W}$ for every $b \in B$. 
\end{proposition}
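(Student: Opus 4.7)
The plan is to reduce the global statement to local information via a good open cover of $B$, using the Čech nerve and the local-to-global lemma \ref{contractible diagrams}.

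First, I would use paracompactness and local contractibility of $B$, together with the local $\mathcal{W}$-fibration hypothesis, to choose a locally finite open cover $\mathcal{U} = \{U_\alpha\}_{\alpha \in A}$ by contractible open sets on which the local $\mathcal{W}$-fibration property holds (the contractible neighborhoods furnished by the definition form a basis for the topology), and then refine $\mathcal{U}$ so that every nonempty finite intersection $V = U_{\alpha_0} \cap \cdots \cap U_{\alpha_n}$ is again contractible and still witnesses the local $\mathcal{W}$-fibration property, i.e., $f^{-1}(b') \to f^{-1}(V)$ lies in $\mathcal{W}$ for every $b' \in V$. This is a standard inductive refinement: since inside any $U_\alpha$ one may pass to arbitrarily small contractible neighborhoods with the property, one can arrange the cover to be closed under this condition on intersections.

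Next, let $P$ denote the poset of nonempty finite intersections of elements of $\mathcal{U}$, ordered by reverse inclusion, and let $\Phi: P \to \mathcal{T}op$ be the functor $V \mapsto f^{-1}(V)$. The first key observation is that each structural inclusion $\Phi(V) \to \Phi(V')$, for $V \subseteq V'$ in $P$, lies in $\mathcal{W}$: indeed, for any $b' \in V \subseteq V'$, both $f^{-1}(b') \to f^{-1}(V)$ and $f^{-1}(b') \to f^{-1}(V')$ are in $\mathcal{W}$ by the previous paragraph, so the 2-out-of-3 axiom forces $\Phi(V) \to \Phi(V')$ to be in $\mathcal{W}$ as well.

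Then I would invoke two incarnations of the nerve theorem: (i) the canonical map $\ho_P \Phi \to E$ is a weak homotopy equivalence, by the standard Čech argument applied to the paracompact open cover $\{f^{-1}(U_\alpha)\}$ of $E$ (realizing the associated Čech bisimplicial space using the realization tools already developed in the appendix); and (ii) the classifying space $BP$ is homotopy equivalent to the nerve $N(\mathcal{U})$, which by the Borsuk/Weil nerve theorem is homotopy equivalent to $B$, hence contractible. Since $BP$ is weakly contractible and every morphism in $\Phi$ is in $\mathcal{W}$, Lemma \ref{contractible diagrams} yields that the canonical map $\Phi(V_0) \to \ho_P \Phi$ is in $\mathcal{W}$ for every $V_0 \in P$. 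Choosing $V_0 = U_\alpha$ with $b \in U_\alpha$, the factorisation
\begin{displaymath}
f^{-1}(b) \longrightarrow f^{-1}(U_\alpha) \longrightarrow \ho_P \Phi \stackrel{\simeq}{\longrightarrow} E
\end{displaymath}
has its first arrow in $\mathcal{W}$ by the local $\mathcal{W}$-fibration property, its second in $\mathcal{W}$ by Lemma \ref{contractible diagrams}, and its third a weak equivalence; by 2-out-of-3 (and the fact that weak homotopy equivalences belong to $\mathcal{W}$), the composite $f^{-1}(b) \to E$ lies in $\mathcal{W}$, as required.

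The main technical obstacle will be the first step: constructing a good cover whose finite intersections inherit the local $\mathcal{W}$-fibration property, and then justifying the identification $\ho_P \Phi \simeq E$ for the associated preimage diagram. Both are formally routine once one has the closure properties of a localiser in hand, but the combinatorics of the cover refinement and the bisimplicial manipulations needed to compare $\ho_P \Phi$ with the Čech resolution of $E$ are where the paracompactness and local contractibility of $B$ are genuinely used.
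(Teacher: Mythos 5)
Your overall strategy --- cover $B$ by contractible opens witnessing the local $\mathcal{W}$-fibration property, form a poset-indexed diagram of preimages, identify its homotopy colimit with $E$, use 2-out-of-3 to see the diagram lands in $\mathcal{W}$, and finish with Lemma \ref{contractible diagrams} --- is the same as the paper's. The gap is in your first step. The definition of a local $\mathcal{W}$-fibration only provides, around each point, \emph{arbitrarily small} contractible neighbourhoods $U$ with $f^{-1}(b')\to f^{-1}(U)$ in $\mathcal{W}$; it says nothing about arbitrary contractible opens, and an intersection $U_{\alpha_0}\cap\cdots\cap U_{\alpha_n}$ of such neighbourhoods need be neither contractible nor of the distinguished kind. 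Your ``standard inductive refinement'' to a cover whose finite intersections are contractible \emph{and} inherit the property is not routine: good covers are guaranteed for manifolds or simplicial complexes, not for a general locally contractible paracompact space, and even where a good cover exists there is no mechanism forcing the intersections to lie among the distinguished neighbourhoods furnished by the definition. Since your appeal to the nerve theorem $BP\simeq N(\mathcal{U})\simeq B$ in step (ii) also requires the cover to be good, the whole argument rests on this unproven refinement.

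The paper avoids the issue by never intersecting: it takes $\mathcal{I}_{\mathcal{U}}$ to be the poset of \emph{all} members of the basis of distinguished contractible opens, ordered by inclusion. Every object of that poset witnesses the property by hypothesis, so 2-out-of-3 applies to every structural map exactly as in your second paragraph; and Segal's results \cite{Se} give both that $N\mathcal{I}_{\mathcal{U}}$ is homotopy equivalent to $B$ (hence contractible) and that the canonical map $\ho_{\mathcal{I}_{\mathcal{U}}}F\to E$ is a weak homotopy equivalence, with no goodness condition on intersections. If you replace your \v{C}ech poset $P$ by this basis poset, the remainder of your argument (Lemma \ref{contractible diagrams} followed by the factorisation of $f^{-1}(b)\to E$ through $f^{-1}(U_\alpha)$ and 2-out-of-3) goes through verbatim.
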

\begin{proof}
Let $\mathcal{U}= \{U_{\alpha} \}_{\alpha \in J}$ be a basis of contractible open sets $U$ of $B$ such that $f^{-1}(b) \to f^{-1}(U)$ is in $\mathcal{W}$ for all $b \in U$. Let $\mathcal{I}_{\mathcal{U}}$ denote the poset whose objects are the elements of the index set $J$ and the order is defined by $j \leq j'$ if $U_j \subseteq U_{j'}$. The nerve $N \mathcal{I}_{\mathcal{U}}$ is contractible because it is homotopy equivalent to $B$, e.g. see \cite{Se}. Let $F: \mathcal{I}_{\mathcal{U}} \to \mathcal{T}op$, $j \mapsto f^{-1}(U_j)$, be the functor of the induced numerable open covering of $E$. There is a weak homotopy equivalence $\ho_{\mathcal{I}_{\mathcal{U}}} F \stackrel{\simeq}{\to} E$ (see \cite{Se}). Since $\mathcal{W}$ satisfies the ``2-out-of-3'' property, the functor $F$ takes values in $\mathcal{W}$. 
Hence Lemma \ref{contractible diagrams} implies that the canonical map $F(j) \to \ho_{\mathcal{I}_{\mathcal{U}}} F$ is a weak homotopy equivalence. Then the result follows.  
\end{proof}

\begin{proof}[Proof of Proposition \ref{local-global}]
It follows from Proposition \ref{local-global2} similarly with \cite[Proposition 5]{McDS}. 
\end{proof}

We can now state the generalisation of Theorem \ref{GGCT} for an arbitrary localiser $\mathcal{W}$. We make the assumption that $\Mor \mathcal{C}$ is locally contractible and paracompact from now on.

\begin{theorem} \label{W-fiber-seq1}
If $F: \mathcal{C} \to \mathcal{C}at(\mathcal{T}op)$ is a continuous functor such that $B F(f)$ is in $\mathcal{W}$ for all $f \in \Mor\mathcal{C}$, then 
\begin{displaymath}
\xymatrix{
B(F(c)) \ar[r] &  B(\int_{\mathcal{C}} F) \ar[r] & B\mathcal{C}
}
\end{displaymath}
is a $\mathcal{W}$-fiber sequence. 
\end{theorem}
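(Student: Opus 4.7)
The plan is to model this on the classical ``group completion theorem'' proof (as in \cite[Theorem 3.2]{Ti} and \cite[Proposition 7.1]{GMTW}), replacing the homology-equivalence arguments there with the local-to-global technology for an arbitrary localiser developed earlier in this appendix.

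First, by Proposition \ref{thomason-grothendieck} it suffices to analyze the projection $\pi \colon |\mathrm{hocolim}_{\mathcal{C}} NF| \to B\mathcal{C}$ that forgets the second coordinate. Since $\Ob \mathcal{C}$ is discrete, the path components of $N_p \mathcal{C}$ are products $\prod_i \Mor_{\mathcal{C}}(c_{i-1}, c_i)$ indexed by tuples $(c_0, \ldots, c_p) \in (\Ob\mathcal{C})^{p+1}$, and over each such component the $p$-th simplicial level of $\pi$ is a trivial bundle projection with fiber $N F(c_0)_\bullet$. The essential feature is how these fibers are reshuffled under the simplicial face maps: the $0$-th face map sends the fiber $NF(c_0)$ over $(c_0 \stackrel{k_1}{\to} c_1 \to \cdots \to c_p)$ to the fiber $NF(c_1)$ over $(c_1 \to \cdots \to c_p)$ via $F(k_1)$, while all other face maps act as the identity on fibers.

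Second, the claim is that $\pi$ is a local $\mathcal{W}$-fibration with fiber $BF(c)$ over $c \in \Ob\mathcal{C} \subset B\mathcal{C}$. Granting this, Proposition \ref{local-global} immediately yields the desired $\mathcal{W}$-fiber sequence $BF(c) \to |\mathrm{hocolim}_{\mathcal{C}} NF| \to B\mathcal{C}$. To establish the claim, fix $x \in B\mathcal{C}$ and, using the local contractibility and paracompactness of $\Mor\mathcal{C}$ together with the discreteness of $\Ob\mathcal{C}$ and the cofibration hypothesis, choose arbitrarily small contractible neighborhoods $U$ of $x$. The pullback $\pi^{-1}(U)$ can then be written as a homotopy colimit indexed by the poset of open simplices of $B\mathcal{C}$ meeting $U$; the nerve of this poset is homotopy equivalent to $U$, hence contractible, exactly as in the proof of Proposition \ref{local-global2}. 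The transition maps in this diagram of fibers are compositions of maps of the form $BF(k)$ for $k \in \Mor\mathcal{C}$, and therefore lie in $\mathcal{W}$ by hypothesis. Applying Lemma \ref{contractible diagrams} to this diagram, we conclude that for each $x' \in U$ the inclusion $\pi^{-1}(x') \hookrightarrow \pi^{-1}(U)$ is in $\mathcal{W}$, as required.

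The main obstacle will be the identification of $\pi^{-1}(U)$ as a homotopy colimit over the appropriate poset with $\mathcal{W}$-transition maps, i.e.\ making the analogy with Proposition \ref{local-global2} rigorous in the presence of a non-trivial bisimplicial structure. This is really a matter of careful bookkeeping of the bisimplicial face data in a neighborhood of $x$ and of invoking the closure of $\mathcal{W}$ under pushouts and directed colimits so as to reduce every transition map to a composition of maps of the form $BF(k)$. Once this local picture is in place, Lemma \ref{contractible diagrams} and Proposition \ref{local-global} combine to deliver the conclusion essentially mechanically.
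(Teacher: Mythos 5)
Your overall strategy (reduce to the projection from the homotopy colimit via Proposition \ref{thomason-grothendieck}, verify a local condition, and invoke Proposition \ref{local-global}) is in the right family, but the step you defer as ``careful bookkeeping'' is exactly where the proof lives, and as sketched it does not go through. You claim that the full projection $\pi \colon |\ho_{\mathcal{C}} NF| \to B\mathcal{C}$ is a local $\mathcal{W}$-fibration, to be verified by writing $\pi^{-1}(U)$ as a homotopy colimit over ``the poset of open simplices of $B\mathcal{C}$ meeting $U$.'' That object is not what you need: open simplices of a realisation are pairwise disjoint, so they do not form a poset under inclusion with contractible nerve over $U$, and an arbitrarily small neighbourhood $U$ of a point in a low-dimensional cell meets cells of unbounded dimension, so $\pi^{-1}(U)$ is an infinite amalgam whose identification with a diagram having transition maps of the form $BF(k)$ is precisely the content of the theorem, not an input to it. Lemma \ref{contractible diagrams} cannot be applied until that identification is actually produced, and producing it for the full (infinite-dimensional) realisation in one step is not easier than the theorem itself.

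The missing idea is the skeletal induction on the \emph{thick} realisation $\|N\mathcal{C}\|$ (built from face maps only). The correct argument filters the base: $\|N\mathcal{C}\|_n$ is the homotopy pushout of $\|N\mathcal{C}\|_{n-1} \leftarrow \partial\Delta[n]\times N_n\mathcal{C} \hookrightarrow \Delta[n]\times N_n\mathcal{C}$, and over the new top cell the map restricted to each component of $N_n\mathcal{C}$ is a trivial bundle with fibre $BF(c_0)$, with the twisting concentrated in the $0$-th face map via $BF(k_1)$ --- the point your first paragraph correctly identifies. Closure of $\mathcal{W}$ under homotopy pushouts (property (b)) then shows that the restriction over each finite skeleton is a local $\mathcal{W}$-fibration, where neighbourhoods only meet finitely many cell types and the local analysis is genuinely elementary; Proposition \ref{local-global} gives a $\mathcal{W}$-fiber sequence at each stage (the base case $n=0$ being trivial since $\Ob\mathcal{C}$ is discrete), and closure under directed homotopy colimits (property (c)) passes the conclusion to $\|N\mathcal{C}\| \simeq B\mathcal{C}$. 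Without this filtration your appeal to Proposition \ref{local-global} for the full map is unsupported; with it, your local analysis over the top cells becomes correct and the argument closes.
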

\begin{proof}
The method of the proof is very familiar \cite{McDS}, \cite{Ti}, \cite{PS}. We are going to proceed inductively on the skeleton of the thick realisation $|| N \mathcal{C} ||$ defined using only the face maps. This has the same homotopy type as $B \mathcal{C}$ \cite[Appendix A]{Se3}. The $n$-skeleton $||N \mathcal{C} ||_n$ is obtained inductively as the homotopy pushout 
\begin{displaymath}
\co( || N \mathcal{C} ||_{n-1} \leftarrow \partial \Delta[n] \times N_n \mathcal{C} \hookrightarrow \Delta[n] \times N_n\mathcal{C} )
\end{displaymath} 
where the left hand-side map is defined using the face maps. For $n=0$, we have the sequence $B(F(c)) \to \Ob(\int_{\mathcal{C}} F) \to \Ob\mathcal{C}$, which is clearly a $\mathcal{W}$-fiber sequence. By the property (b) of localisers, the sequence of maps 
\begin{displaymath}
\xymatrix{
B(F(c)) \ar[r] & ||N(\mathcal{C} \wr  BF )||_n \ar[r] & ||N\mathcal{C}||_n
}
\end{displaymath}
is a local $\mathcal{W}$-fibration, and so also a $\mathcal{W}$-fiber sequence by Proposition \ref{local-global} (compare with \cite[Proposition 3]{McDS} and \cite[Proposition 2.2]{PS}). Then the result follows from the property (c) of localisers (compare with \cite[Proposition 2.3]{PS}) and Proposition \ref{thomason-grothendieck}.
\end{proof}

\begin{remark} \label{technical-assumption}
We note that for the purpose of applying Theorem \ref{W-fiber-seq1}, the additional point-set topological assumptions on $\mathcal{C}$ can be assumed without loss of generality. This is because every $\mathcal{C}$ (as in \ref{A.1}) admits a ``CW-approximation'' from the left, i.e. there is new topological category $\tilde{\mathcal{C}}$ such that $\Mor(\tilde{\mathcal{C}})$ is a CW-complex (and satisfies the assumptions of \ref{A.1}) together with a continuous functor $Q: \tilde{\mathcal{C}} \to \mathcal{C}$ that induces a weak homotopy equivalence between all morphism spaces. The construction of $\tilde{\mathcal{C}}$ is done by applying the singular chains functor $S_*: \mathcal{T}op \to s\mathcal{S}et$ followed by the geometric realisation $|-|: s\mathcal{S}et \to \mathcal{T}op$ on the individual morphism spaces of $\mathcal{C}$. Since both functors preserve pullbacks, this produces a well-defined topological category $\tilde{\mathcal{C}}$ together with a continuous functor $Q: \tilde{\mathcal{C}} \to \mathcal{C}$ induced by the counit map $|-| \circ S_* \to 1_{\mathcal{T}op}$ which is a pointwise weak homotopy equivalence of functors.  
\end{remark}

The last theorem has two important consequences. The first one is a generalisation of Quillen's Theorem B \cite{Q}.

\begin{theorem}
Let $F: \mathcal{D} \to \mathcal{C}$ be an object in $\mathcal{T}(\mathcal{C})$. If $B(u_*): B(\mathcal{D} \downarrow c) \to B(\mathcal{D} \downarrow c')$ is in $\mathcal{W}$ for every $u: c \to c'$ in $\mathcal{C}$, then $B(\mathcal{D} \downarrow c) \to B \mathcal{D} \to B \mathcal{C}$ is a $\mathcal{W}$-fiber sequence.
\end{theorem}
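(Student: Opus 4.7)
The plan is to derive this statement as a direct combination of Theorem \ref{W-fiber-seq1} and Proposition \ref{Y-ext}, applied to the diagram of comma categories associated with $F$.

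First, I would consider the continuous functor $\mathcal{F}(\mathcal{D},F): \mathcal{C} \to \mathcal{C}at(\mathcal{T}op)$ provided by Proposition \ref{superyoneda}, whose value at $c$ is the over-category $F \downarrow c$ and whose value on a morphism $u : c \to c'$ is post-composition $u_* : F \downarrow c \to F \downarrow c'$. By our hypothesis, $B\mathcal{F}(\mathcal{D},F)(u) = B(u_*)$ lies in $\mathcal{W}$ for every $u \in \Mor\mathcal{C}$. Hence Theorem \ref{W-fiber-seq1} applies and yields a $\mathcal{W}$-fiber sequence
\begin{displaymath}
B(F \downarrow c) \longrightarrow B\!\left(\textstyle\int_{\mathcal{C}} \mathcal{F}(\mathcal{D},F)\right) \longrightarrow B\mathcal{C}
\end{displaymath}
for every $c \in \Ob\mathcal{C}$, where the right-hand map is induced by the projection functor $p_{\mathcal{F}(\mathcal{D},F)}: \int_{\mathcal{C}} \mathcal{F}(\mathcal{D},F) \to \mathcal{C}$.

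Next, Proposition \ref{Y-ext} provides a natural homotopy equivalence $u_{(\mathcal{D},F)} : B(\int_{\mathcal{C}} \mathcal{F}(\mathcal{D},F)) \stackrel{\simeq}{\to} B\mathcal{D}$ induced by the forgetful functor $U$ that sends a typical object $(c, d, \alpha: F(d) \to c)$ to $d$ and a morphism $(u, f)$ to $f$. Substituting this equivalence into the above sequence produces the desired sequence
\begin{displaymath}
B(F \downarrow c) \longrightarrow B\mathcal{D} \longrightarrow B\mathcal{C},
\end{displaymath}
which is still a $\mathcal{W}$-fiber sequence by the two-out-of-three property of $\mathcal{W}$ (applied to the resulting square of homotopy fibers).

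The only genuine verification is that after making this identification the second map agrees (up to homotopy) with $BF$ and the first map with the canonical functor sending an object $(d, F(d) \to c)$ of $F \downarrow c$ to $d \in \Ob \mathcal{D}$; this is immediate from the definitions of $U$ and $p_{\mathcal{F}(\mathcal{D},F)}$ since $p_{\mathcal{F}(\mathcal{D},F)} = F \circ U$ on the nose, and it ensures that the resulting square
\begin{displaymath}
\xymatrix{
B(\int_{\mathcal{C}} \mathcal{F}(\mathcal{D},F)) \ar[r]^-{Bp} \ar[d]_{\simeq}^{u_{(\mathcal{D},F)}} & B\mathcal{C} \ar@{=}[d] \\
B\mathcal{D} \ar[r]_-{BF} & B\mathcal{C}
}
\end{displaymath}
commutes. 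The only mild point of care—the ``main obstacle'' if any—is verifying the point-set topological hypotheses required by Theorem \ref{W-fiber-seq1}, namely that $\Mor(\int_{\mathcal{C}} \mathcal{F}(\mathcal{D},F))$ is locally contractible and paracompact; but by Remark \ref{technical-assumption} we may replace $\mathcal{C}$ and $\mathcal{D}$ by CW-approximations without loss of generality, so this causes no difficulty.
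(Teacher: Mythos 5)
Your proposal is correct and follows essentially the same route as the paper: apply Theorem \ref{W-fiber-seq1} to the diagram of comma categories $\mathcal{F}(\mathcal{D},F)$ and then identify $B(\int_{\mathcal{C}}\mathcal{F}(\mathcal{D},F))$ with $B\mathcal{D}$ via Proposition \ref{Y-ext}. The extra checks you record (compatibility of the projection with $F\circ U$ and the point-set hypotheses handled by Remark \ref{technical-assumption}) are correct and only make explicit what the paper leaves implicit.
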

\begin{proof}
The diagram $\mathcal{F}(\mathcal{D}, F): \mathcal{C} \to \mathcal{C}at(\mathcal{T}op)$ satisfies the conditions of Theorem \ref{W-fiber-seq1}, so the sequence 
\begin{displaymath}
\xymatrix{
B (\mathcal{D} \downarrow c) \ar[r] & B( \int_{\mathcal{C}} \mathcal{F}(\mathcal{D}, F)) \ar[r] & B \mathcal{C}
}
\end{displaymath}
is a $\mathcal{W}$-fiber sequence. By Lemma \ref{Y-ext}, there is a canonical homotopy equivalence $B( \int_{\mathcal{C}} \mathcal{F}(\mathcal{D}, F)) \stackrel{\simeq}{\to} B \mathcal{D}$, hence the result follows. 
\end{proof}

We also have a generalisation of Theorem \ref{ggct} for an arbitrary localiser $\mathcal{W}$.  

\begin{definition}  A diagram $G: J \to T(\mathcal{C})$ is called $\mathcal{W}$-stable if for every morphism $u: c \to c'$ in $\mathcal{C}$, the map $B \mathcal{G}_{J}(c) \to B\mathcal{G}_{J}(c')$ is in $\mathcal{W}$. 
\end{definition}

\begin{theorem} \label{W-fiber-seq2}
If $G: J \to T(\mathcal{C})$ is a $\mathcal{W}$-stable diagram, then there is $\mathcal{W}$-fiber sequence
\begin{displaymath}
\xymatrix{
B\mathcal{G}_{J}(c) \to B \int_J (U G) \to B\mathcal{C}
}
\end{displaymath}
for every $c \in \Ob \mathcal{C}$.
\end{theorem}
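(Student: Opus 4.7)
The plan is to imitate the proof of Theorem \ref{ggct} but with Theorem \ref{W-fiber-seq1} in place of Theorem \ref{GGCT}. First, I would apply Theorem \ref{W-fiber-seq1} to the continuous functor $\mathcal{G}_J : \mathcal{C} \to \mathcal{C}at(\mathcal{T}op)$ defined by $c \mapsto \int_J \mathcal{F}(G(-))(c)$. The $\mathcal{W}$-stability hypothesis says precisely that $B\mathcal{G}_J(u)$ lies in $\mathcal{W}$ for every $u \in \Mor\mathcal{C}$, so the theorem produces a $\mathcal{W}$-fiber sequence
\begin{displaymath}
B\mathcal{G}_J(c) \to B\left(\int_{\mathcal{C}} \mathcal{G}_J\right) \to B\mathcal{C}.
\end{displaymath}

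Next, the only remaining task is to identify the middle term. By the Fubini-type commutativity of the Grothendieck construction (which holds on the nose at the level of topological categories, since both sides are built from the same space of tuples), there is an isomorphism
\begin{displaymath}
\int_{\mathcal{C}} \mathcal{G}_J = \int_{\mathcal{C}} \int_J \mathcal{F}(G(-))(-) \;\cong\; \int_J \int_{\mathcal{C}} \mathcal{F}(G(-))(-).
\end{displaymath}
Now Proposition \ref{Y-ext} supplies, for each $j \in \Ob J$, a natural homotopy equivalence $B(\int_{\mathcal{C}} \mathcal{F}(G(j))(-)) \xrightarrow{\simeq} B U(G(j))$. The naturality statement in Proposition \ref{Y-ext} is exactly naturality in the pair $(\mathcal{D}, F) \in T(\mathcal{C})$, so assembling these over $j$ gives a pointwise weak equivalence between the $J$-diagrams $j \mapsto \int_{\mathcal{C}} \mathcal{F}(G(j))(-)$ and $j \mapsto UG(j)$ of topological categories. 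By Corollary \ref{grothendieck-thomason2}, this induces a weak homotopy equivalence
\begin{displaymath}
B\left(\int_J \int_{\mathcal{C}} \mathcal{F}(G(-))(-)\right) \xrightarrow{\simeq} B\left(\int_J UG\right).
\end{displaymath}

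Composing these identifications with the $\mathcal{W}$-fiber sequence from the first step, and invoking property (a) of a localiser (which ensures that weak homotopy equivalences may be spliced into $\mathcal{W}$-fiber sequences freely), we obtain the desired $\mathcal{W}$-fiber sequence $B\mathcal{G}_J(c) \to B\int_J(UG) \to B\mathcal{C}$. The final map is induced, as in Theorem \ref{ggct}, by the composites $UG(j) \leftarrow \int_{\mathcal{C}} \mathcal{F}(G(j))(-) \to \mathcal{C}$, which can be traced through the proof of Proposition \ref{Y-ext}.

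The one place requiring genuine care is the second step: verifying that the components $u_{(\mathcal{D},F)}$ of Proposition \ref{Y-ext} really are natural in morphisms $(\kappa,\eta) : (\mathcal{D},F) \to (\mathcal{D}',F')$ of $T(\mathcal{C})$, so that they pass through the Grothendieck construction over $J$. Inspecting the adjoint pair $(G,U)$ constructed in the proof of Proposition \ref{Y-ext}, the functors $G$ and $U$ and the counit $\rho$ are all manifestly natural in $(\mathcal{D},F)$, so the resulting homotopy $BG\circ BU \simeq 1$ is natural as well. This is the main technical point, but once it is checked the rest of the argument is a direct transcription of the proof of Theorem \ref{ggct}.
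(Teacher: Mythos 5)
Your argument is correct and is essentially the paper's own proof, which simply cites Theorem \ref{W-fiber-seq1} and Proposition \ref{Y-ext} together with ``the same arguments as in the proof of Theorem \ref{ggct}'', i.e.\ the Fubini property of the Grothendieck construction followed by Corollary \ref{grothendieck-thomason2}. One small remark on your last paragraph: all that is needed there is that the forgetful functors $\int_{\mathcal{C}}\mathcal{F}(G(j)) \to UG(j)$ form a strict natural transformation of $J$-diagrams whose components are weak equivalences (which they do, by inspection and by Proposition \ref{Y-ext}); the adjoint $G$ of the proof of Proposition \ref{Y-ext} is in fact not strictly natural in $(\mathcal{D},F)$, but neither it nor the naturality of the homotopy $BG\circ BU\simeq 1$ is required.
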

\begin{proof}
This follows from Theorem \ref{W-fiber-seq1} and Proposition \ref{Y-ext} with the same arguments as in the proof of Theorem \ref{ggct}.
\end{proof}

\section*{Acknowledgements}

This work is a part of my D.Phil. thesis. I am grateful to my D.Phil. supervisor, Prof. Ulrike Tillmann, for her valuable advice and support. I also wish to gratefully acknowledge the financial support of an EPSRC studentship 
and a scholarship from the Onassis Public Benefit Foundation.

\end{document}